\colorlet{red}{black}
\numberwithin{equation}{section}
\newtheorem{theorem}{Theorem}[section]
\newtheorem{lemma}[theorem]{Lemma}
\newtheorem{corollary}[theorem]{Corollary}
\newtheorem{proposition}[theorem]{Proposition}
\theoremstyle{definition}
\newtheorem{definition}[theorem]{Definition}
\theoremstyle{remark}
\newtheorem{remark}[theorem]{Remark}
\newcommand{\RR}{\mathbb{R}}
\newcommand{\CC}{\mathbb{C}}
\newcommand{\HH}{\mathbb{H}}
\newcommand{\TT}{\mathbb{T}}
\newcommand{\I}{\mathbf{i}}\newcommand{\J}{\mathbf{j}}\newcommand{\K}{\mathbf{k}}
\DeclarePairedDelimiterX{\inner}[2]{\langle}{\rangle}{#1,#2}
\DeclareMathOperator{\diag}{diag}
\newcommand{\btheta}{\bm{\theta}}
\providecommand{\NN}{\mathbb{N}}
\providecommand{\ZZ}{\mathbb{Z}}
\providecommand{\TT}{\mathbb{T}}
\providecommand{\bk}{\boldsymbol{k}}
\providecommand{\brho}{\boldsymbol{\rho}}
\providecommand{\bn}{\boldsymbol{n}}
\providecommand{\bm}{\mathbf{m}}
\providecommand{\Bdiag}{\operatorname{Bdiag}}
\DeclareMathOperator{\dist}{dist}
\newcommand{\PhiSymp}{\Phi}
\newcommand{\val}[1]{\textcolor{red}{#1}}
\newcommand{\MovedProofMarker}[4]{%
  \expandafter\xdef\csname mp@section@#1\endcsname{\arabic{section}}%
  \expandafter\xdef\csname mp@equation@#1\endcsname{\arabic{equation}}%
  \expandafter\xdef\csname mp@currentlabel@#1\endcsname{\@currentlabel}%
  \addtocounter{equation}{#2}%
  \par\smallskip
  \noindent\hyperref[#3]{\val{\emph{The proof is given in Appendix~\ref*{#4}.}}}%
  \par\smallskip
}
\newenvironment{MovedProofContext}[1]{%
  \begingroup
  \setcounter{equation}{\csname mp@equation@#1\endcsname}%
  \renewcommand{\theequation}{\csname mp@section@#1\endcsname.\arabic{equation}}%
  \edef\@currentlabel{\csname mp@currentlabel@#1\endcsname}%
}{%
  \endgroup
}
\newcolumntype{L}{>{\raggedright\arraybackslash}X}
\title{Weyl distributions, spectral properties, and circulant approximation results for quaternion block multilevel Toeplitz matrix sequences}
\author{%
Ayoub Lailoune\thanks{The UM6P Vanguard Center, Mohammed VI Polytechnic University -- Rabat Campus, Morocco. \texttt{ayoub.lailoune@um6p.ma}}
\and
Valerio Loi\thanks{Department of Science and High Technology, University of Insubria. \texttt{vloi@uninsubria.it}}
\and
Stefano Serra-Capizzano\thanks{Department of Science and High Technology, University of Insubria. \texttt{s.serracapizzano@uninsubria.it}}
\thanks{Department of Information Technology, Division of Scientific Computing, Uppsala University. \texttt{stefano.serra@it.uu.se}}
}
\date{\today}
\begin{document}
\maketitle

\begin{abstract}
The present work contains a comprehensive treatment of Weyl eigenvalue and singular value distributions, \val{Schatten $p$-norm estimates, spectral localization and positive-definiteness criteria} for single-axis quaternion block multilevel Toeplitz matrix sequences generated by $s\times t$ quaternion matrix-valued, $d$-variate, Lebesgue integrable generating functions\val{, where $s,t,d$ are positive integers and where spectral localization and positive-definiteness are studied in the Hermitian setting, with $s=t$}.  Furthermore, in view of concrete applications, we are interested in preconditioning and matrix approximation results. To this end, a crucial step is the extension of the notion of an approximating class of sequences (a.c.s.) to the case of matrix sequences with quaternion entries, since it allows us to decompose the difference between a matrix and its preconditioner into low-norm plus (relatively) low-rank terms. As a specific example, we consider classes of quaternion block multilevel circulant matrix sequences as an a.c.s. for quaternion block multilevel Toeplitz matrix sequences. These approximation results lay the foundations for fast preconditioning methods when dealing with large quaternion linear systems stemming from modern applications. We conclude our study with numerical experiments and directions for future research.
\end{abstract}

\noindent\textbf{Keywords:}
Quaternions; Toeplitz matrices; matrix sequences; asymptotic spectral and singular value distributions; a.c.s. notion.

\section{Introduction}\label{sec:intro}

The linear algebra and matrix theory of the quaternions \cite{zhang_quaternions_1997} and their numerical applications have grown significantly in recent years; see, e.g., \cite{jacobson1974basic} for the foundations in the context of noncommutative algebras.
In particular, structured matrices such as Toeplitz-type quaternion matrices are widely considered in applications such as discrete convolutions on tori and quaternion signal analysis (cf., for example, the motivations of the analysis in \cite{lin_hermitian_2025}), while structured quaternion matrix theory is emerging in fields such as color imaging, polarization imaging, and low-rank quaternion modeling; see, e.g., \cite{jia_ng_song_rqmc_2019,chen_ng_inpainting_2022,pan_ng_sqmf_2023,liu_ling_jia_randomized_qsvd_2022} and the references therein.
In these applications, there is a need for fast algorithms, as they are well-studied for the case of large linear systems with complex coefficient matrices. In particular, when Toeplitz structures are encountered, a successful strategy has consisted \val{in} approximating them using matrix algebras associated with fast transforms such as circulants, $\omega$-circulants, and trigonometric matrix algebras (see \cite{Ng-Book,benedetto_optimal_2000,Koro2,garoni_multilevel_2018,GLTblock2} and the references therein). In turn, these approximations are used in connection with Krylov methods, with the goal of designing effective preconditioners, or in connection with multigrid (multi-iterative) solvers for finding appropriate smoothers and projection operators. In this direction, it is worth emphasizing that there has been recent interest in designing structure-preserving quaternion Krylov methods (such as QGMRES, QBiCG, QFOM), for which both the modeling benefits and the algorithmic constraints are analyzed when working directly over $\HH$ \cite{jia_ng_qgmres_2021,li_wang_qbicg_2024,li_wang_qfom_2023,li_wang_zhang_glqfom_2025}.

On the other hand, the analysis of the efficiency of such preconditioners depends on an accurate spectral and singular value analysis of the original matrices and matrix sequences and of the preconditioned ones. In this direction, having in mind the Toeplitz setting in the complex case, we need precise localization results, a study of the extreme spectral/singular value behavior, and an asymptotic study of the Weyl eigenvalue and singular value distributions of Toeplitz matrix sequences with quaternion entries, using the specific features of quaternion matrix theory; see \cite{zhang_quaternions_1997,rodman_topics_2014}. The asymptotic and analytic study of the quaternion Toeplitz spectrum parallels the theory of canonical-eigenvalue computations \cite{mackey_quat_eig_1995} and modern quaternion low-rank/SVD analysis \cite{song_ding_ng_lowrank_2021,huang_jia_li_rcg_qsvd_2025}.

More concretely, our aim is to develop an asymptotic spectral and singular value distribution theory for multilevel block quaternion Toeplitz sequences generated by Fourier transforms along a single-axis, and an approximation framework (that is, the approximating class of sequences (a.c.s.) theory) that helps to make the approximation theory of quaternion matrix sequences as systematic as in the complex case. Even though this step is far from trivial, we look at it as the first task for treating more general problems where there are locally varying coefficients. In fact, \val{a more advanced} goal is to build the quaternion counterpart of the systematic asymptotic spectral and singular value theory of the complex generalized locally Toeplitz (GLT) framework \cite{garoni_Toeplitz_2017,garoni_multilevel_2018,GLTblock2}. We believe that developing such theoretical machinery would represent a strong tool that could accelerate this emerging field.

The two main problems we face when developing quaternion spectral theory are well known: the noncommutativity of the quaternion algebra, which makes left and right actions genuinely different, and a theory of eigenvalues that is substantially different from the case of complex-valued matrices. Some intuitive solutions to these problems, which we adopt, are to analyze complex slices, \val{$\CC_{\I}$, $\I$ being the standard imaginary unit of the complex field
$\CC$ and also one of the units of the quaternions,} separately and to use the ``symplectic embedding'' trick introduced by Lee \cite{lee_quaternion_1947} for many problems related to the properties of quaternions; see also \cite{jacobson1974basic}.
Beyond the applications we suggest, quaternion models also appear in control, collective dynamics, discrete geometry, and computational number theory. This further motivates the development of a robust spectral and matrix theory over $\HH$ \cite{fu_kou_wang_control_2023,degond_collective_2018,solymosi_swanepoel_incidence_2008,kv_quat_orders_2010,kv_corrigendum_2012}.

In the current paper, we contribute in the following way:
\begin{enumerate}
\item
We give a practical definition of eigenvalue and singular value symbols for quaternion sequences by analyzing the canonical \val{eigenvalues}, which reproduces most complex spectral asymptotic properties as given in \cite{tyrtyshnikov1998,TilliNota,TilliComplex,Donatelli2012}. Canonical eigenvalues provide lossless representatives of all infinite right-eigenvalue classes. Thus, we regard this choice as ideal for our purposes. The singular values of a quaternion matrix behave, in an asymptotic sense, in the same way as their complex matrix counterparts, so the setting we define is analogous. Since we consider block multilevel Toeplitz matrix sequences with $s\times t$ quaternion matrix-valued generating functions, the corresponding complex counterparts are represented by the works \cite{TilliNota,Donatelli2012}. We refer the reader to \cite{capizzano_distribution_2001,TilliLT} for the original definition and its first use, and to \cite[Chapter~5]{garoni_Toeplitz_2017} for the a.c.s. calculus in the complex setting.

\item
We extend approximating classes of sequences to rectangular quaternionic matrix sequences and prove the equivalence of the defined approximation with complex a.c.s.\ after embedding, with matching rank and norm normalizations. \val{The resulting calculus consists of the complex a.c.s.\ axioms
\(\mathrm{ACS}1\)-\(\mathrm{ACS}5\) and their quaternionic counterparts
\(\mathrm{ACS}_{\HH}1\)-\(\mathrm{ACS}_{\HH}5\). We prove
the product axiom for composable rectangular sequences with varying aspect
ratios under an explicit product-compatibility condition.}

\item
We formalize the three single-axis kernels and recover standard transport identities under the embedding for the rectangular matrix-valued case and the reduction to right kernels. In the scalar case, these are classical tools in the field of quaternion signal processing.

\item
For multilevel block Toeplitz matrices we show that any sandwich Toeplitz matrix equals a right Toeplitz matrix with a reflected symbol; as a consequence, the corresponding matrix sequence shares the same property. Further, we explicitly prove adjoint identities and a sharp criterion for proving the Hermitian character in the block, multilevel setting, together with tight localization results for the associated spectra.

\item
We give exact complex $2s\times 2t$ singular and canonical eigenvalue symbols for the left, right, and sandwich cases.
We prove that multilevel quaternion Toeplitz sequences with $L^{1}$ symbols are distributed in \val{the sense of} the singular values as the embedded symbols $G_\tau$ for $\tau\in\{L,R,(S_L,S_R)\}$. For eigenvalues, we treat (i) Hermitian sequences without additional regularity and (ii) non-Hermitian symbols under a Tilli-type separation hypothesis of the embedded generating function; for the separation hypothesis see \cite{TilliComplex,Donatelli2012}.

\item
We recover the Schatten $p$-norm estimates for Toeplitz matrices with $L^{p}$ symbols in the same scaling as the complex case, with the exception of a factor $\val{4}$; see Theorem~\ref{thm:q-schatten-coarse-mv}. For Hermitian sequences we localize spectra of all finite matrices inside the essential range of the embedded Hermitian symbol and we obtain positivity criteria. Here the reference \val{for the complex setting is} paper \cite{serra-capizzano_unitarily_2002}, where a rich variety of variational and norm inequalities are derived.

\item
Adapting the Pan-Ng splitting and decomposition \cite{PanCirculant2024} for our purposes and settings, we show that right Toeplitz sequences admit a circulant a.c.s., providing an alternative route to singular/spectral distribution results for quaternion Toeplitz sequences that recover the classical approach for the corresponding complex Toeplitz sequences.
\end{enumerate}
In summary, our theoretical tools and matrix analysis results are based on \cite{zhang_quaternions_1997,rodman_topics_2014}, with the canonical-eigenvalue selection from \cite{zhang_quaternions_1997}. The symplectic embedding trick, to our knowledge, dates back to \cite{lee_quaternion_1947} and is here used at the level of matrix sequences. The use of a quaternion Fourier analysis in signal processing is discussed in \cite{ell_quaternion_2013,Ell_2014}. In the complex case, the multilevel GLT machinery is well developed and it is explained in an exhaustive way in the surveys \cite{garoni_multilevel_2018,GLTblock2}. For the unilevel scalar Hermitian quaternion Toeplitz with bounded generating function, see \cite{lin_hermitian_2025}.

\paragraph{Organization of the work.}
In Section~\ref{sec:quaternion} we review algebraic primitives, symplectic embedding, canonical eigenvalues, Schatten norms, and function-space preliminaries. Section~\ref{sec:q_asymptotic_spectral} contains the embedding-based notion of spectral and singular value symbols. In Section~\ref{sec:QFT-Toeplitz} we develop the study of single-axis quaternion Toeplitz sequences, we prove kernel reduction, adjoint and properties of quaternion Hermitian character, together with proper localization results of the spectra. Furthermore, we give exact embedded symbols and we derive distribution theorems and Schatten/localization bounds. In Section~\ref{sec:acs-circulant} we establish the circulant a.c.s.\ and the QDFT fiber picture, giving an alternative proof of the spectral and singular value Toeplitz distributions. Section~\ref{sec:apps} is devoted to numerical tests for verifying and visualizing distribution results and for checking whether they are already evident in practical and low dimensional contexts. Conclusions and further remarks are reported in Section~\ref{sec:conclusion}, together with a coherent plan for future investigations.

\section{Definitions and preliminary tools}\label{sec:quaternion}

In the present section we set the basic notation for quaternions\val{, its units,} and quaternion matrices, \val{and then we introduce} the slice algebras \(\CC_{\mu}\) \val{for a given unit $\mu$} and the spectral facts used later. For most of the notations, we follow the standard conventions for quaternion algebra and linear algebra. For a systematic and more theoretical treatment of quaternion linear algebra, see \cite{rodman_topics_2014}. The link between quaternion and complex linear algebra is mostly taken from \cite{zhang_quaternions_1997} and will be used repeatedly.

\subsection{Algebraic primitives}\label{subsec:alg_primitives}
The classical Hamilton quaternion algebra is the skew-field
\[
  \HH := \operatorname{span}_{\RR}\{1,\I,\J,\K\},\qquad
  \I^{2}=\J^{2}=\K^{2}=\I\J\K=-1,
\]
where, by construction,
\[
  \I\J=\K,\qquad \J\K=\I,\qquad \K\I=\J.
\]
  For simplicity in our calculations, in this paper, the unit \(\K\) will always be represented in the explicit form \(\K=\I\J\). \\ By construction \(q\in\HH\) has the Cartesian form \(q=q_{0}+q_{1}\I+q_{2}\J+q_{3}\K=q_{0}+q_{1}\I+q_{2}\J+q_{3}\I\J\) with \(q_{j}\in\RR\) the real components. The component \(\Re q := q_{0}\) is called the real part, while \(\Im q := q_{1}\I+q_{2}\J+q_{3}\K\) is the imaginary part. \\
The quaternion conjugation and the modulus (or norm) are defined in a similar way as for complex numbers, that is, we have
\[
  \overline{q}=q_{0}-q_{1}\I-q_{2}\J-q_{3}\K,
  \qquad
  |q|:=\sqrt{q\,\overline{q}},
\]
so that \(|pq|=|p|\,|q|\), exactly as in the complex setting.

\val{For a quaternionic or complex matrix \(A=(a_{k\ell})\), we write
\(\overline A=(\overline{a_{k\ell}})\) for its entrywise conjugate.}

\smallskip
\noindent
We call \(\mu\in\HH\) an imaginary unit if \(\Re\mu=0\) and \(\mu^{2}=-1\). Given such \(\mu\), we can define the complex slice
\[
  \CC_{\mu}:=\operatorname{span}_{\RR}\{1,\mu\}\cong\CC.
\]
If \(\nu\) is another normalized imaginary unit orthogonal to \(\mu\), that means \(\nu\mu=-\mu\nu\), then we find the important orthogonal decomposition (see \cite[Thm.~2.1]{zhang_quaternions_1997})
\begin{equation}\label{eq:orth_dec}
  \HH \;=\; \CC_{\mu}\oplus \CC_{\mu}\nu,
  \qquad\text{i.e.}\quad
  q=z+w\,\nu \ \text{with}\ z,w\in\CC_{\mu},
\end{equation}
and the representation in \eqref{eq:orth_dec} is unique. Unless stated otherwise, when we perform an orthogonal slice decomposition, we fix for simplicity \(\mu=\I\) and \(\nu=\J\). The representation \eqref{eq:orth_dec} is convenient for our matrix analysis, since it mimics complex algebra representations and enables a smooth use of complex-analytic arguments via the symplectic embedding defined in the next section. \\
In many algebraic arguments of the present paper, we use the following elementary identities
\begin{equation}\label{eq:comm}
    \J\alpha=\overline{\alpha}\,\J \quad \text{for all} \quad \alpha\in\CC_{\I},
\end{equation}
and the associated matrix version
\begin{equation}\label{eq:comm_mat}
    \J A=\overline{A}\,\J \quad \text{for all} \quad A\in\CC_{\I}^{m \times n}.
\end{equation}
For readability purposes, we declare explicitly these identities when we use them.

\subsection{Symplectic embedding}\label{subsec:symp}
We start by recalling the classical complex symplectic embedding. Writing \(x=z+w\J\) with \(z,w\in\CC_{\I}\) as in \eqref{eq:orth_dec} and taking into account the results in \cite{lee_quaternion_1947}, define
\begin{equation}\label{eq:symp_embed}
  \PhiSymp(x)
  :=
  \begin{bmatrix}
      z   & w\\
     -\overline{w} & \overline{z}
  \end{bmatrix}
  \ \in\ \CC^{2\times2}.
\end{equation}
The definition of \(\PhiSymp\) can be extended to matrices as follows. If \(A=Z+W\J\) with \(Z,W\in\CC_{\I}^{\,m\times n}\), then we set
\[
  \PhiSymp_{1}(A)\;:=\;
  \begin{bmatrix}
      Z   & W\\
     -\overline{W} & \overline{Z}
  \end{bmatrix}
  \ \in\ \CC^{2m\times2n}.
\]
Equivalently, one may apply \eqref{eq:symp_embed} entrywise to every entry of a matrix. With a slight abuse of notation, we also denote the matrix \emph{entrywise} extension by \(\PhiSymp\). \val{More precisely, for $A=(a_{k\ell})\in\HH^{m\times n}$ we set $\PhiSymp(A):=\bigl(\PhiSymp(a_{k\ell})\bigr)_{k=1,\dots,m}^{\ell=1,\dots,n}\in\CC^{2m\times 2n}$, the $m\times n$ block matrix whose $(k,\ell)$ block is $\PhiSymp(a_{k\ell})$ as in \eqref{eq:symp_embed}.} The two extensions are related by a fixed permutation \val{and in fact we have}
\begin{equation}\label{eq:similarity}
  P_m^{*}\,\PhiSymp(A)\,P_n \;=\; \PhiSymp_{1}(A),
\end{equation}
where \(P_n\in\{0,1\}^{2n\times 2n}\) is the permutation
\begin{equation*}
  P_n \;:=\; \sum_{i=1}^{n}\bigl(e_{2i-1}e_i^{*}+e_{2i}e_{n+i}^{*}\bigr),
\end{equation*}
and \(\{e_j\}_{j=1}^{2n}\) is the Euclidean basis of \(\CC^{2n}\)\val{, $^*$ denoting the standard conjugate transpose for complex vectors or matrices}.

The key structural property is indeed that \(\PhiSymp\) is a real-linear \(*\)-algebra embedding into complex matrices, see \cite{lee_quaternion_1947,zhang_quaternions_1997}. Throughout, if either $X \in \HH^{m \times t}$ or  $X \in\CC^{m \times t}$, $X^*$ denotes its transpose-conjugate, with respect to quaternions or complex numbers, respectively.

\begin{lemma}[$*$-algebra property]\label{lem:phi_star}
\(\PhiSymp\) is an injective real-linear \(*\)-algebra map. In particular,
\[
  \PhiSymp(AB)=\PhiSymp(A)\PhiSymp(B),
  \qquad
  \PhiSymp(A^{*})=\PhiSymp(A)^{*}.
\]
\end{lemma}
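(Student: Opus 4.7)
The approach is to reduce everything to the scalar version of $\PhiSymp$ in \eqref{eq:symp_embed} and then to lift to rectangular matrices by exploiting that, as an entrywise extension, $\PhiSymp(A)$ is the block matrix whose $(i,j)$-th $2\times 2$ block is $\PhiSymp(A_{ij})$. Real-linearity and injectivity are transparent from \eqref{eq:symp_embed}: the assignment $A \mapsto \PhiSymp(A)$ depends real-linearly on the slice components $(Z, W)$, and $\PhiSymp(A) = 0$ forces $Z = 0$ and $W = 0$ by inspection of the top two blocks, hence $A = 0$ by the uniqueness of \eqref{eq:orth_dec}.

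For the multiplicative property the core step is the scalar identity. For $x = z + w\J$ and $y = z' + w'\J$ with slice components in $\CC_{\I}$, I would expand $xy$ by distributivity and push every factor of $\J$ to the right using the commutation rule \eqref{eq:comm} (i.e.\ $\J \alpha = \overline{\alpha}\J$ for $\alpha \in \CC_{\I}$) together with $\J^{2} = -1$. This produces the canonical slice form $xy = (zz' - w\overline{w'}) + (zw' + w\overline{z'})\J$, and substituting into \eqref{eq:symp_embed} yields a $2\times 2$ matrix that matches the direct product $\PhiSymp(x)\PhiSymp(y)$ after a four-entry comparison. To lift to rectangular matrices, I would use the entrywise structure: the $(i,j)$-th $2\times 2$ block of $\PhiSymp(A)\PhiSymp(B)$ equals
\[
\sum_{k}\PhiSymp(A_{ik})\PhiSymp(B_{kj}) \;=\; \sum_{k}\PhiSymp(A_{ik}B_{kj}) \;=\; \PhiSymp\bigl((AB)_{ij}\bigr),
\]
by the scalar multiplicativity together with real-linearity.

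The $*$-identity is handled analogously. The scalar case $\PhiSymp(\overline{x}) = \PhiSymp(x)^{*}$ follows by substituting $\overline{x} = \overline{z} + (-w)\J$ into \eqref{eq:symp_embed} and recognising the resulting $2 \times 2$ matrix as the complex conjugate-transpose of $\PhiSymp(x)$. The matrix case is then immediate from $(A^{*})_{ij} = \overline{A_{ji}}$ combined with the block structure of $\PhiSymp(A)$: taking the complex adjoint of the $2m\times 2n$ matrix $\PhiSymp(A)$ simultaneously swaps block positions $(i,j) \leftrightarrow (j,i)$ and conjugate-transposes each block, yielding exactly $\PhiSymp(\overline{A_{ji}}) = \PhiSymp((A^{*})_{ij})$ in the $(i,j)$-slot. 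I do not expect any genuine obstacle: the only care needed is the systematic normalisation of intermediate quaternion expressions to the canonical form $Z + W\J$ with $Z, W \in \CC_{\I}$ via \eqref{eq:comm_mat}, together with a clean separation between the two meanings of $*$ — the quaternion adjoint appearing inside $\PhiSymp$ and the complex adjoint applied to the image matrix.
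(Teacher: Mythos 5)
Your proof is correct and complete. The paper itself supplies no argument for this lemma, instead citing Lee and Zhang for the $*$-algebra embedding property; your elementary verification (scalar multiplicativity via the commutation rule $\J\alpha=\overline{\alpha}\J$ and $\J^2=-1$, then the lift to matrices by blockwise bookkeeping, plus the scalar and matrix $*$-identities) is exactly the standard computation one would carry out, and all the intermediate identities you state — in particular $xy=(zz'-w\overline{w'})+(zw'+w\overline{z'})\J$ and $\overline{x}=\overline{z}-w\J$ — are accurate. Your final remark distinguishing the quaternionic adjoint inside $\PhiSymp$ from the complex adjoint applied to the image is the one point that genuinely requires care, and you handle it correctly.
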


\subsection{Spectral tools for quaternion matrices}
Because of non-commutativity, left and right spectra differ in general and they have different properties. In this paper we always work with \emph{right} eigenvalues, which together with their related eigenvectors, are defined as follows.

\begin{definition}[Right eigenpair]\label{def:right_eig}
A nonzero \(x\in\HH^{m}\) and \(\lambda\in\HH\) form a right-eigenpair of
\(A\in\HH^{m\times m}\) if \(Ax=x\lambda\).
\end{definition}

Right eigenvalues in general are infinitely many, but we can group them with quaternion conjugacy classes as reported in the next result. As a consequence, using these equivalence classes, we can manage the spectra of the considered matrix sequences as in the complex setting.

\begin{lemma}[Conjugacy classes]\label{lem:conj_class}
If \(Ax=x\lambda\), then \(A(xq)=(xq)(q^{-1}\lambda q)\) for every nonzero \(q\). Thus right eigenvalues occur in conjugacy classes
\[
  [\lambda]:=\{\,q^{-1}\lambda q : q \ne 0\,\}.
\]
\end{lemma}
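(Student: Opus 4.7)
The statement is essentially a one-line associativity computation, so the plan is short and the main task is to lay out the algebra carefully, given the noncommutativity.

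The plan is as follows. First, I would recall that although $\HH$ is noncommutative, it is associative, and this associativity lifts to quaternion matrix–vector–scalar products: for $A \in \HH^{m\times m}$, $x \in \HH^m$, and $q \in \HH$ the product $Axq$ is well-defined and independent of bracketing. With this in hand, starting from $Ax=x\lambda$ I would compute
\[
  A(xq)=(Ax)q=(x\lambda)q=x(\lambda q)=x\bigl(q\,q^{-1}\lambda q\bigr)=(xq)\bigl(q^{-1}\lambda q\bigr),
\]
where the first two equalities use associativity, the third inserts the identity $q q^{-1}=1$ (legal since $q\neq 0$ and $\HH$ is a skew field), and the last is again associativity.

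Next, I would justify that $(xq,\,q^{-1}\lambda q)$ is indeed a right eigenpair in the sense of \Cref{def:right_eig}, which requires $xq\neq 0$. Since $x\neq 0$, some component $x_i\in\HH$ is nonzero; as $\HH$ is a division ring and $q\neq 0$, we have $x_i q\neq 0$, hence $xq\neq 0$.

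Finally, varying $q$ over $\HH\setminus\{0\}$, the scalar $q^{-1}\lambda q$ ranges precisely over the quaternion conjugacy class $[\lambda]=\{q^{-1}\lambda q : q\neq 0\}$, which yields the stated grouping of right eigenvalues. There is no genuine obstacle here: the only subtlety worth flagging is the careful tracking of the side on which $q$, $\lambda$, and matrix entries act, since over $\HH$ left and right actions cannot be freely exchanged; once associativity is invoked correctly, the identity falls out immediately.
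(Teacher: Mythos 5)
Your proof is correct and is exactly the elementary associativity computation one would expect; the paper states this lemma without proof (treating it as a standard fact from quaternion linear algebra, cf.\ \cite{zhang_quaternions_1997}), so there is nothing to compare against, but your argument --- including the check that $xq\neq 0$ using that $\HH$ is a division ring --- is the intended one and fills in the omitted details cleanly.
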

An important property is \val{the following:} while eigenvalues are infinitely many, right conjugacy classes are not.

\begin{lemma}[{\cite[Thm.~5.4]{zhang_quaternions_1997}}]\label{lem:right_exist}
Every \(A\in\HH^{n\times n}\) has exactly \(n\) right-eigenvalue classes, counted with multiplicity.
\end{lemma}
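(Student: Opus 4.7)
The plan is to pass to the symplectic embedding \(\PhiSymp(A)\in\CC^{2n\times 2n}\) from Section~\ref{subsec:symp} and reduce the counting of right-eigenvalue classes of \(A\) to the classical counting of complex eigenvalues of \(\PhiSymp(A)\), which, by the fundamental theorem of algebra, are \(2n\) with multiplicity. The key link is Lemma~\ref{lem:phi_star}: \(\PhiSymp\) is a \(*\)-algebra embedding, so quaternion right-eigenpair equations translate faithfully into ordinary complex eigenpair equations for \(\PhiSymp(A)\).

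First I would fix a canonical representative inside each conjugacy class. Every quaternion \(\lambda\) is conjugate, in the sense of Lemma~\ref{lem:conj_class}, to the unique complex number \(\Re\lambda+|\Im\lambda|\,\I\in\CC_{\I}\) with nonnegative \(\I\)-imaginary part, because one can rotate \(\Im\lambda\) onto the positive \(\I\)-axis by a suitable unit quaternion. This yields a canonical representative \(\lambda^{\mathrm{can}}\in\CC_{\I}\) for every class \([\lambda]\), so counting classes reduces to counting canonical representatives.

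Next I would set up the correspondence between right-eigenvalue classes of \(A\) and eigenvalues of \(\PhiSymp(A)\) modulo complex conjugation. Writing \(x=u+v\,\J\) with \(u,v\in\CC_{\I}^{n}\) and expanding \(Ax=x\lambda\) with \(\lambda\in\CC_{\I}\) via the commutation identity~\eqref{eq:comm_mat}, a direct computation shows that the vector \((u^{\top},-\overline{v}^{\top})^{\top}\in\CC^{2n}\) is a complex eigenvector of \(\PhiSymp(A)\) in the convention~\eqref{eq:symp_embed}, with eigenvalue \(\lambda\); conversely, any complex eigenpair of \(\PhiSymp(A)\) reassembles into a right-eigenpair of \(A\) over \(\HH^{n}\). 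Moreover, the block structure in~\eqref{eq:symp_embed} forces the structural identity \(J\,\overline{\PhiSymp(A)}\,J^{-1}=\PhiSymp(A)\), where \(J\) is the block-antidiagonal involution encoding left multiplication by \(\J\); hence the characteristic polynomial of \(\PhiSymp(A)\) has real coefficients and its \(2n\) complex eigenvalues pair as \(\{\lambda,\overline{\lambda}\}\), each pair collapsing to a single canonical representative \(\lambda^{\mathrm{can}}\). A count then gives exactly \(n\) right-eigenvalue classes.

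The main obstacle I anticipate is matching \emph{multiplicities} faithfully: one must show that the algebraic multiplicity of a canonical representative \(\lambda^{\mathrm{can}}\in\CC_{\I}\) as a right eigenvalue of \(A\) equals its multiplicity in the complex spectrum of \(\PhiSymp(A)\), and that real eigenvalues of \(\PhiSymp(A)\) always occur with even multiplicity so that the pairing \(\lambda\leftrightarrow\overline{\lambda}\) is perfect even on the real axis. The cleanest route is to work with a Schur or Jordan form of \(\PhiSymp(A)\) and verify the pairing at the level of blocks: this ensures that exactly \(n\) eigenvalues, with multiplicity, lie in the closed upper \(\I\)-half-plane while the remaining \(n\) are their conjugates. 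Once the block-level pairing is secured, the final count of \(n\) classes follows as bookkeeping.
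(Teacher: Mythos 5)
The paper does not prove this lemma: it states it as an imported result, citing Zhang's Theorem~5.4 in \cite{zhang_quaternions_1997}. So there is no in-paper argument to compare against. That said, your plan via the symplectic embedding $\PhiSymp$ is essentially the proof Zhang himself gives (he uses the ``complex adjoint'' representation, which is the same object up to the permutation in~\eqref{eq:similarity}), and the plan is correct. The reduction of the class count to the canonical representatives $\Re\lambda+\abs{\Im\lambda}\,\I\in\CC_\I$ is standard; the computation translating $Ax=x\lambda$ (with $\lambda\in\CC_\I$, $x=u+v\J$) into $\PhiSymp(A)\,(u^{\top},-\overline v^{\top})^{\top}=\lambda\,(u^{\top},-\overline v^{\top})^{\top}$ checks out, and the converse reassembly works too. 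You also correctly single out the genuine subtlety: the symmetry $J\,\overline{\PhiSymp(A)}\,J^{-1}=\PhiSymp(A)$ only gives a real characteristic polynomial, which does \emph{not} by itself force even multiplicity at real eigenvalues; one needs the stronger fact that the antilinear map $\xi\mapsto J\overline\xi$ is a quaternionic structure ($\sigma^2=-\mathrm{id}$) commuting with $\PhiSymp(A)$, so that each generalized eigenspace at a real eigenvalue carries an $\HH$-module structure and hence has even complex dimension. With that filled in (e.g.\ at the Jordan-block level, as you suggest), the count of exactly $n$ classes with multiplicity follows.
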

Note that, by Lemma \ref{lem:conj_class}, if an eigenvalue is real, then its conjugacy class collapses to the eigenvalue singleton. The latter is an important property of real eigenvalues that simplifies the analysis of Hermitian or normal matrices, for example.

\paragraph{Canonical eigenvalues.}
We now recall a standard way to select meaningful representatives of right eigenvalue classes. This enables us to perform a complete matrix spectral analysis using a similar approach, as done in the complex setting. Indeed, Zhang proved the existence of the \val{complex canonical eigenvalues, as reported in the subsequent result.}

\begin{theorem}[{\cite[Thm.~5.4]{zhang_quaternions_1997}}]\label{thm:canonical}
Every \(A \in \HH^{n \times n}\) has exactly \(n\) right eigenvalues which are complex numbers with nonnegative imaginary parts, counted with multiplicity.
\end{theorem}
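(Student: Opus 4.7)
The strategy is to reduce the problem to the complex spectrum of the symplectic image $\PhiSymp_1(A)\in\CC^{2n\times 2n}$, which carries exactly $2n$ complex eigenvalues counted with algebraic multiplicity, to identify an internal conjugation symmetry that halves this count, and then to lift each upper half-plane representative back to a right eigenvalue of $A$ via the $*$-algebra property of Lemma~\ref{lem:phi_star}.

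First I would establish the identity $J_n\,\overline{\PhiSymp_1(A)}\,J_n^{-1} = \PhiSymp_1(A)$ with $J_n := \bigl[\begin{smallmatrix} 0 & -I_n \\ I_n & 0 \end{smallmatrix}\bigr]$, which follows by a direct block computation from the slice decomposition $A = Z + W\J$. This forces $\spec(\PhiSymp_1(A))$ to be closed under complex conjugation (since $\PhiSymp_1(A)y = \lambda y$ implies $\PhiSymp_1(A)(J_n\bar y) = \bar\lambda\,J_n\bar y$), so nonreal eigenvalues appear in conjugate pairs with equal multiplicity. For any real eigenvalue $\lambda$, the antilinear involution $\sigma(y):=J_n\bar y$ satisfies $\sigma^2=-\mathrm{id}$ and preserves each generalized eigenspace, endowing it with a quaternionic structure and forcing its complex dimension to be even. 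Letting $n_+$ and $n_0$ denote the total multiplicities of strictly upper half-plane and real eigenvalues respectively, $2n_+ + n_0 = 2n$ yields the canonical count $n_+ + n_0/2 = n$.

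Next I would lift each such $\lambda\in\CC_{\I}$ in the closed upper half-plane to a right eigenvalue of $A$. Given $\PhiSymp_1(A)\bigl[\begin{smallmatrix} u \\ v \end{smallmatrix}\bigr] = \lambda \bigl[\begin{smallmatrix} u \\ v \end{smallmatrix}\bigr]$ with $u,v\in\CC_{\I}^n$, I set $x := u - \bar v\,\J\in\HH^n$; then $x\ne 0$ and the first column of $\PhiSymp_1(x)$ is exactly $(u,v)^{\top}$. By Lemma~\ref{lem:phi_star} combined with \eqref{eq:similarity}, $\PhiSymp_1(Ax) = \PhiSymp_1(A)\PhiSymp_1(x)$; the conjugation pairing from the previous step identifies the second column of $\PhiSymp_1(x)$ as an eigenvector of $\bar\lambda$, and matching columns gives $\PhiSymp_1(A)\PhiSymp_1(x) = \PhiSymp_1(x)\,\diag(\lambda,\bar\lambda) = \PhiSymp_1(x\lambda)$, whence injectivity of $\PhiSymp_1$ yields $Ax = x\lambda$ in the sense of Definition~\ref{def:right_eig}. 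The identity $\bar\lambda = \J^{-1}\lambda\J$ shows $[\lambda]=[\bar\lambda]$ in Lemma~\ref{lem:conj_class}, so the lower half-plane eigenvalues of $\PhiSymp_1(A)$ carry no new conjugacy classes, consistent with the $n$ classes from Lemma~\ref{lem:right_exist}.

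The main technical obstacle is the multiplicity bookkeeping at real eigenvalues, where the even-complex-dimension assertion must hold on \emph{generalized} (not merely geometric) eigenspaces. This is resolved by noting that the conjugation symmetry passes to every power, $J_n\,\overline{\PhiSymp_1(A)^k}\,J_n^{-1} = \PhiSymp_1(A)^k$, so $\sigma$ preserves each $\ker(\PhiSymp_1(A)-\lambda I)^k$ for real $\lambda$; the quaternionic-structure argument then applies uniformly on the full generalized eigenspace, and the count $n_+ + n_0/2 = n$ becomes unambiguous.
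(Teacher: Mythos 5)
The paper does not prove this theorem — it cites it directly as \cite[Thm.~5.4]{zhang_quaternions_1997} — but the surrounding remark sketches exactly the symplectic-embedding approach ("compute the spectrum of $\PhiSymp(A)\in\CC^{2n\times 2n}$ ... the eigenvalues occur in conjugate pairs; select those with $\Im z\ge 0$"). Your proof is a correct and complete realization of that sketch, and it is also the standard route in Zhang's original source. Two observations are worth making. First, the paper's informal remark silently glosses over the most delicate point: for a real eigenvalue $\lambda$ of $\PhiSymp(A)$, the "conjugate pair" $\{z,\bar z\}$ degenerates to a singleton, and one still needs to explain why the algebraic multiplicity is even. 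Your argument via the antilinear map $\sigma(y)=J_n\bar y$ with $\sigma^2=-\mathrm{id}$ — turning each generalized eigenspace into a quaternionic vector space of even complex dimension — is exactly the right mechanism, and your final paragraph correctly insists this must be run on $\ker(\PhiSymp_1(A)-\lambda I)^k$ rather than merely the geometric eigenspace. Second, the lifting step is carried out precisely: with $x:=u-\bar v\,\J$ one gets $\PhiSymp_1(x)=\left[\begin{smallmatrix}u&-\bar v\\v&\bar u\end{smallmatrix}\right]$ whose columns are the $\lambda$ and $\bar\lambda$ eigenvectors, and the $*$-homomorphism property together with $\PhiSymp_1(x\lambda)=\PhiSymp_1(x)\operatorname{diag}(\lambda,\bar\lambda)$ pulls $\PhiSymp_1(Ax)=\PhiSymp_1(x\lambda)$ back to $Ax=x\lambda$ by injectivity; all the block identities check out. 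The only small point you leave implicit is the converse containment — that any right eigenvalue $\lambda\in\CC_\I$ with $\Im\lambda\ge 0$ necessarily appears in $\spec(\PhiSymp_1(A))$ — which follows by the same computation run backward and is needed to conclude "exactly $n$" rather than "at least $n$"; it costs one extra line.
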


A simple way to compute canonical eigenvalues is to compute the spectrum of \(\PhiSymp(A)\in\CC^{2n\times 2n}\). By construction, the eigenvalues of \(\PhiSymp(A)\) occur in conjugate pairs \(\{z,\overline z\}\)\val{. Hence, by retaining every eigenvalue in the open upper half-plane and half the algebraic multiplicity of each real eigenvalue, we obtain the \(n\) canonical eigenvalues, counted with multiplicity. {By Lemma~\ref{lem:borel-eigenvalue-enumeration} in Appendix~\ref{app:preliminary-proofs}, the canonical eigenvalues 
obtained in this way admit a Borel enumeration.}}

\subsubsection*{Quaternionic SVD and Schatten norms}
The singular value decomposition (SVD) over \(\HH\) mirrors the complex case.

\begin{theorem}[Quaternion SVD, {\cite[Thm.~7.2]{zhang_quaternions_1997}}]\label{thm:q_svd}
For \(A\in\HH^{m\times n}\) there exist quaternionic unitary matrices \(U,V\) of the proper dimensions, \val{$\rho=\operatorname{rank}_{\HH}(A)$}, and
\(\Sigma=\operatorname{diag}(\sigma_{1},\dots,\sigma_{\val{\rho}})\) with \(\sigma_{j}>0\), $j=1,\ldots,\val{\rho}$, such that
\[
  A
  =U\begin{bmatrix}\Sigma&0\\[1pt]0&0\end{bmatrix}V^{*}.
\]
The \(\sigma_{j}\) are the positive square roots of the positive eigenvalues of \(A^{*}A\), the other eigenvalues being all equal to zero.
\end{theorem}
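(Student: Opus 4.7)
The plan is to mimic the standard complex SVD construction, using only ingredients already in place: the symplectic embedding $\Phi$, its $*$-algebra property from Lemma~\ref{lem:phi_star}, and the existence of canonical eigenvalues from Theorem~\ref{thm:canonical}. First I would establish a spectral theorem for Hermitian quaternion matrices: if $H \in \HH^{n\times n}$ satisfies $H^{*}=H$, then $\Phi(H)$ is complex Hermitian of size $2n$; its eigenvalues are real and, by the symplectic pairing structure, each nonreal eigenvalue would occur in conjugate pairs, which here means that every real eigenvalue of $\Phi(H)$ has even multiplicity. Picking $n$ orthonormal canonical eigenvectors over $\HH$ by pulling back along $\Phi$ yields a quaternionic unitary $V$ and a real diagonal $D$ with $H = V D V^{*}$. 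Apply this to the Hermitian positive semidefinite matrix $H := A^{*}A$, whose embedding is $\Phi(A)^{*}\Phi(A)\succeq 0$, so that $D = \diag(\lambda_{1},\dots,\lambda_{n})$ with $\lambda_{1}\ge \cdots \ge \lambda_{r}>0=\lambda_{r+1}=\cdots=\lambda_{n}$.

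Next I would define the singular values $\sigma_{j}:=\sqrt{\lambda_{j}}$ for $j=1,\dots,r$ and set $\Sigma := \diag(\sigma_{1},\dots,\sigma_{r})$. Partition $V = [V_{1}\ V_{2}]$ with $V_{1}\in\HH^{n\times r}$, and define
\[
 U_{1} := A V_{1} \Sigma^{-1} \ \in\ \HH^{m\times r}.
\]
A direct computation gives
\[
 U_{1}^{*}U_{1} = \Sigma^{-1} V_{1}^{*} A^{*} A V_{1}\Sigma^{-1} = \Sigma^{-1} V_{1}^{*} (V D V^{*}) V_{1} \Sigma^{-1} = \Sigma^{-1}\Sigma^{2}\Sigma^{-1} = I_{r},
\]
so $U_{1}$ has orthonormal columns over $\HH$. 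I would then complete $U_{1}$ to a full quaternionic unitary $U=[U_{1}\ U_{2}]\in\HH^{m\times m}$ by a Gram–Schmidt procedure on $\HH$-valued inner products, whose validity is equivalent, via $\Phi$, to the standard complex Gram–Schmidt applied to the orthogonal complement of the range of $\Phi(U_{1})$ (and the symplectic symmetry of the complement guarantees that the completion itself descends from the embedded picture back to $\HH$).

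Finally I would verify the factorization column by column. For the first $r$ columns, $U\!\left[\begin{smallmatrix}\Sigma&0\\0&0\end{smallmatrix}\right]V^{*}V_{1} = U_{1}\Sigma = A V_{1}\Sigma^{-1}\Sigma = A V_{1}$. For the remaining columns one needs $AV_{2}=0$, which follows from $A^{*}A V_{2} = V_{2}\cdot 0 = 0$, since then $(AV_{2})^{*}(AV_{2})=0$ and the quaternionic norm is definite. Hence $A = U\!\left[\begin{smallmatrix}\Sigma&0\\0&0\end{smallmatrix}\right]V^{*}$, and the $\sigma_{j}$ are by construction the positive square roots of the nonzero eigenvalues of $A^{*}A$.

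I expect the main obstacle to be the Hermitian spectral theorem over $\HH$ and the associated Gram–Schmidt completion: both are routine in the complex case but require care with right eigenvectors and noncommutative scalars. The cleanest path is to do everything inside $\CC^{2m\times 2m}$ via $\Phi$, exploit the canonical eigenvalue selection from Theorem~\ref{thm:canonical} to pick representatives with $\Im z \ge 0$, and then use Lemma~\ref{lem:phi_star} to transport orthonormality and unitarity back to $\HH$, thereby avoiding any direct manipulation of infinite right-eigenvalue classes.
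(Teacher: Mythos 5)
The paper does not prove this statement at all: it is quoted verbatim as an external result, citing Zhang \cite[Thm.~7.2]{zhang_quaternions_1997}, so there is no in-paper proof to compare against. Your proposal is a legitimate sketch of the standard proof of this SVD, and is essentially the construction used in the quaternionic linear algebra literature (including Zhang's): diagonalize the Hermitian positive semidefinite matrix $A^{*}A$ over $\HH$, define $\sigma_{j}$ as square roots of its nonzero eigenvalues, set $U_{1}:=AV_{1}\Sigma^{-1}$, complete to a full unitary, and verify column by column. The core computations you write out ($U_{1}^{*}U_{1}=I_{r}$, $AV_{2}=0$ from $A^{*}AV_{2}=0$ and definiteness of the quaternionic norm) are correct and do not run into noncommutativity problems because $\Sigma$ and $D$ are real diagonal.

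One point of your justification is stated loosely enough to be misleading and is worth tightening. You argue that because the eigenvalues of $\Phi(A)$ come in conjugate pairs $\{z,\bar z\}$, ``every real eigenvalue of $\Phi(H)$ has even multiplicity.'' That inference does not follow: conjugate-pair symmetry is vacuous for a real eigenvalue, which is its own conjugate. The even multiplicity of each real eigenvalue of $\Phi(H)$ is true, but it comes from a different symmetry: the conjugate-linear map $C:v\mapsto \mathsf{J}\,\overline v$ with $\mathsf{J}=\bigl[\begin{smallmatrix}0&-I\\ I&0\end{smallmatrix}\bigr]$ satisfies $C^{2}=-I$ and commutes with $\Phi(H)$, so every eigenspace of $\Phi(H)$ is a quaternionic (hence even complex-dimensional) subspace. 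This is precisely the quaternionic/Kramers degeneracy argument, and it is the clean way to pull the Hermitian spectral theorem and the Gram--Schmidt completion back from $\CC^{2n\times 2n}$ to $\HH^{n\times n}$. With that repair, your outline is a correct and self-contained route to the theorem, whereas the paper simply defers to Zhang.
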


\begin{definition}[Schatten \(p\)-norms]\label{def:schatten}
Let \(A\in\HH^{m\times n}\) and set \(r:=\min\{m,n\}\). Let the singular values be \(\sigma_1(A)\ge\cdots\ge\sigma_r(A)\ge0\). For \(p\in[1,\infty)\) the
\emph{Schatten \(p\)-norm} of \(A\) is
\[
  \|A\|_{p}
  \;:=\;\Biggl(\,\sum_{j=1}^{r}\sigma_j(A)^{\,p}\,\Biggr)^{\!1/p},
\]
and for \(p=\infty\) set \(\|A\|_{\infty}:=\sigma_1(A)\).
These norms are unitarily invariant over \(\HH\): \(\|UAV\|_{p}=\|A\|_{p}\) for all quaternionic unitary \(U,V\) of compatible sizes.
In accordance with the standard terminology in the complex setting, we call \(\|A\|_{1}\) the trace norm, \(\|A\|_{2}\) the Frobenius norm, and \(\|A\|_{\infty}\) the spectral norm.
\end{definition}
By convention, from this point on, we write \(\|A\|\) to denote spectral norm.

\begin{proposition}[Symplectic embedding duplicates singular values]\label{prop:phi_dup_sv}
Let \(A\in\HH^{m\times n}\). If \(\sigma_{1}\ge\cdots\ge\sigma_{r}>0\) are the nonzero singular values of \(A\), then the singular values of \(\PhiSymp(A)\in\CC^{2m\times 2n}\) are
\[
  \sigma_{1},\sigma_{1},\;\sigma_{2},\sigma_{2},\;\dots,\;\sigma_{r},\sigma_{r},
\]
followed by zeros to reach size \(\min\{2m,2n\}\).
\end{proposition}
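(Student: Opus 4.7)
The plan is to reduce the singular value computation of $\symplemb{A}$ to a direct calculation on the SVD factors of $A$, using that $\PhiSymp$ is a $\ast$-algebra map. First I would invoke the quaternion SVD (Theorem~\ref{thm:q_svd}) to write $A = U \tilde{\Sigma} V^{\ast}$, where $U \in \HH^{m\times m}$ and $V \in \HH^{n\times n}$ are quaternion unitary and $\tilde{\Sigma} \in \RR^{m\times n}$ is the padded rectangular matrix carrying $\sigma_1,\ldots,\sigma_r$ along the leading diagonal and zeros elsewhere. By Lemma~\ref{lem:phi_star},
\[
  \symplemb{A} \;=\; \symplemb{U}\,\symplemb{\tilde{\Sigma}}\,\symplemb{V}^{\ast}.
\]

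Next I would observe that $\PhiSymp$ sends quaternion unitaries to complex unitaries: indeed $\symplemb{U}^{\ast}\symplemb{U} = \symplemb{U^{\ast}U} = \symplemb{I_m} = I_{2m}$, and analogously for $V$. Since complex singular values are invariant under left and right multiplication by unitaries, the singular values of $\symplemb{A}$ coincide (as a multiset with multiplicities) with those of $\symplemb{\tilde{\Sigma}}$.

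The third step is an explicit computation of $\symplemb{\tilde{\Sigma}}$. In the slice decomposition $\tilde{\Sigma} = \tilde{\Sigma} + 0\cdot\J$, the block formulation gives $\PhiSymp_{1}(\tilde{\Sigma}) = \tilde{\Sigma}\oplus\tilde{\Sigma}$, i.e.\ the block diagonal $2m\times 2n$ matrix with two copies of $\tilde{\Sigma}$; equivalently, the entrywise version replaces each real scalar $\sigma_j$ by $\sigma_j I_2$. In either form the nonzero singular values of the embedded matrix are $\sigma_1,\sigma_1,\ldots,\sigma_r,\sigma_r$, followed by zeros up to size $\min\{2m,2n\}$. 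The two conventions are reconciled by the permutation similarity \eqref{eq:similarity}, where $P_m$ and $P_n$ are unitary and therefore preserve singular values.

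Altogether, the proof amounts to a short application of Lemma~\ref{lem:phi_star} followed by a bookkeeping computation on a real diagonal matrix. There is no serious obstacle; the only care required — and hence the main (mild) pitfall — is to keep the two embedding conventions $\PhiSymp$ and $\PhiSymp_{1}$ aligned via \eqref{eq:similarity}, so that the doubling of singular values can be read off transparently from the block form.
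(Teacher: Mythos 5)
Your argument is correct and complete. The paper states this proposition without a proof, so there is nothing of the paper's own to compare against; but your route — invoke the quaternion SVD (Theorem~\ref{thm:q_svd}), push the factorization through $\PhiSymp$ using the $*$-homomorphism of Lemma~\ref{lem:phi_star}, check that quaternion unitaries map to complex unitaries, and read off the singular values from $\PhiSymp(\tilde{\Sigma})$ — is precisely the argument that the preceding material is set up to support, and it fills the gap cleanly. The only point of care you flag (matching $\PhiSymp$ against $\PhiSymp_{1}$ via the permutation similarity \eqref{eq:similarity}) is indeed the one place where sloppiness could cause confusion, and you handle it correctly: unitary permutations leave the singular value multiset untouched, so either block convention gives $\tilde{\Sigma}\oplus\tilde{\Sigma}$ up to permutation, whose nonzero singular values are each $\sigma_j$ with multiplicity two.
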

\MovedProofMarker{p23}{0}{proof:p23}{app:preliminary-proofs}

\begin{corollary}\label{cor:s_p}
For \(1\le p\le\infty\),
\[
  \|A\|_{p}
  =2^{-1/p}\,\bigl\|\PhiSymp(A)\bigr\|_{p}.
\]
\end{corollary}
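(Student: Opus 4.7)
The plan is to derive Corollary~\ref{cor:s_p} as a direct bookkeeping consequence of Proposition~\ref{prop:phi_dup_sv}, since the Schatten $p$-norm depends only on the multiset of singular values, and Proposition~\ref{prop:phi_dup_sv} tells us exactly what that multiset is for $\symplemb{A}$.

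First I would handle the case $1\le p<\infty$. By Definition~\ref{def:schatten}, $\|\symplemb{A}\|_{p}^{p}$ equals the sum of the $p$-th powers of the singular values of $\symplemb{A}\in\CC^{2m\times 2n}$. Proposition~\ref{prop:phi_dup_sv} gives the list of those singular values as $\sigma_{1}(A),\sigma_{1}(A),\ldots,\sigma_{r}(A),\sigma_{r}(A)$ followed by zeros (where $r=\min\{m,n\}$ in the convention of Definition~\ref{def:schatten}; zero singular values contribute nothing to either side). Substituting and factoring the multiplicity,
\[
 \|\symplemb{A}\|_{p}^{p} \;=\; \sum_{j=1}^{r} 2\,\sigma_{j}(A)^{p} \;=\; 2\,\|A\|_{p}^{p},
\]
so that $\|\symplemb{A}\|_{p}=2^{1/p}\|A\|_{p}$, which is exactly $\|A\|_{p}=2^{-1/p}\|\symplemb{A}\|_{p}$.

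Next I would dispatch $p=\infty$. Here $\|\symplemb{A}\|_{\infty}=\sigma_{1}(\symplemb{A})$, and Proposition~\ref{prop:phi_dup_sv} shows that this top singular value equals $\sigma_{1}(A)=\|A\|_{\infty}$ (duplication does not change the maximum). With the convention $2^{-1/\infty}:=1$, this coincides with the formula and hence the corollary holds for every $p\in[1,\infty]$.

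There is essentially no obstacle: the content is entirely contained in Proposition~\ref{prop:phi_dup_sv}. The only care required is a comment that if $\min\{m,n\}>r$, i.e., $A$ has additional zero singular values, then those zeros appear (doubled) among the singular values of $\symplemb{A}$ as well and contribute $0$ to both Schatten sums, so the identity is unaffected; and one may optionally remark that the formula can equivalently be read as unitary invariance under the embedding up to the deterministic factor $2^{1/p}$, a fact that will be used throughout the paper to translate Schatten-based estimates from $\CC$ to $\HH$.
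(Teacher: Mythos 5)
Your argument is correct and is the natural (and essentially unique) derivation: Proposition~\ref{prop:phi_dup_sv} gives the singular-value multiset of $\symplemb{A}$ as a doubling of that of $A$, and the Schatten $p$-norm formula then yields the factor $2^{1/p}$ immediately, with the $p=\infty$ case handled by noting that duplication does not change the maximum. The paper leaves this corollary without an explicit proof precisely because it follows by this direct bookkeeping from Proposition~\ref{prop:phi_dup_sv} and Definition~\ref{def:schatten}, so your proposal matches the intended reasoning.
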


\subsection{Function theory preliminaries and notation}
\label{subsec:measLpH}
Let \(D\subset\RR^{k}\) be a real measurable set with Lebesgue measure \(|D|\).
We can consider \(\HH^{m\times n}\cong\RR^{4mn}\) using Cartesian coordinates and say that a quaternion matrix function \(f:D\to\HH^{m\times n}\) is measurable if and only if all its real coordinates are measurable. Equivalently, using real-linearity and continuity of \(\PhiSymp\), \(f:D\to\HH^{m\times n}\) is measurable if and only if \(\PhiSymp\circ f:D\to\CC^{2m\times 2n}\) is measurable: in the rest of the manuscript, for our purposes, the measurable domain $D$ is assumed to have finite and positive measure.\\
For asymptotic spectral and singular value theory of quaternion matrix sequences, we need to use the Schatten norms of measurable matrix-valued functions, their singular and canonical eigenvalue fields, and ensure that they are measurable. Fortunately, in analogy with the complex setting, analogous properties hold true.

\val{We denote by \(L^{0}(D;\CC^{m\times n})\) the space of equivalence
classes, modulo equality almost everywhere, of measurable functions
\(D\to\CC^{m\times n}\). The analogous notation is used for quaternion
matrix-valued functions.}

\val{A function \(g:\CC^{r}\to\CC\) is symmetric if
\[
g(z_{\pi(1)},\ldots,z_{\pi(r)})=g(z_{1},\ldots,z_{r})
\]
for every \((z_{1},\ldots,z_{r})\in\CC^{r}\) and every permutation
\(\pi\) of \(\{1,\ldots,r\}\).}

\begin{lemma}\label{lem:meas-spectral-functional}
Let \(f:D\to\HH^{r\times r}\) be a measurable function. Then for any \(p\in[1,\infty]\), the Schatten norm map \(x\mapsto\|f(x)\|_{p}\) is measurable; the singular value map \(x\mapsto(\sigma_{1}(f(x)),\ldots,\sigma_{r}(f(x)))\) is measurable; \val{and, if \(\lambda_1^+,\ldots,\lambda_r^+\) are the Borel maps
provided by Lemma~\ref{lem:borel-eigenvalue-enumeration}, then for every continuous
symmetric \(g:\CC^r\to\CC\), the map
\[
x\longmapsto
g\bigl(\lambda_1^+(f(x)),\ldots,\lambda_r^+(f(x))\bigr)
\]
is measurable}.
\end{lemma}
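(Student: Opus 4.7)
The plan is to leverage the symplectic embedding \(\PhiSymp\) and reduce every statement to the corresponding classical measurability fact in the complex setting. Since \(\PhiSymp:\HH^{r\times r}\to\CC^{2r\times 2r}\) is real-linear and hence continuous, the composition \(F:=\PhiSymp\circ f:D\to\CC^{2r\times 2r}\) is measurable whenever \(f\) is. For the Schatten norms I would directly invoke Corollary~\ref{cor:s_p} to get \(\|f(x)\|_{p} = 2^{-1/p}\|F(x)\|_{p}\) for all \(p\in[1,\infty]\) (with the convention \(2^{-1/\infty}=1\)); since \(A\mapsto \|A\|_{p}\) is continuous on \(\CC^{2r\times 2r}\), measurability follows by composition. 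For the singular values, Proposition~\ref{prop:phi_dup_sv} gives \(\sigma_{j}(f(x))=\sigma_{2j-1}(F(x))=\sigma_{2j}(F(x))\) for \(j=1,\dots,r\), so it suffices to recall the classical fact that the ordered singular values of a measurable complex matrix function are measurable, which is a Weyl-type continuity statement (since \(\sigma_{k}(M)^{2}\) is the \(k\)-th largest eigenvalue of the Hermitian positive semidefinite \(M^{*}M\)).

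The delicate item is the symmetric-functional claim for canonical eigenvalues, and I would split it into two stages. First, the coefficients of \(\chi_{F(x)}(\lambda):=\det(\lambda I_{2r}-F(x))\) are polynomials in the entries of \(F(x)\) and hence measurable in \(x\); the map from monic polynomials of degree \(2r\) to unordered multisets of complex roots is continuous, so fixing any Borel total order on \(\CC\) (for instance, lexicographic in \((\Re z,\Im z)\)) produces a measurable tuple \(\mu_{1}(x),\dots,\mu_{2r}(x)\) of all eigenvalues of \(F(x)\) counted with multiplicity. Second, by Theorem~\ref{thm:canonical} together with the conjugate-pair structure forced by the \(\PhiSymp\)-image, the selection rule ``retain the \(r\) entries in \(\{\Im z\ge 0\}\), halving the multiplicity of real eigenvalues'' is a Borel map from conjugate-symmetric \(2r\)-tuples to unordered \(r\)-tuples in the closed upper half-plane. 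Composing with the continuous \(S_{r}\)-invariant \(g\), which by symmetry factors through the quotient \(\CC^{r}/S_{r}\), delivers the required measurability.

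The main obstacle will be the Borel character of the canonical selection, which is genuinely discontinuous where a canonical eigenvalue crosses or touches the real axis, or where two canonical eigenvalues coalesce. The way around this is that on every exceptional set the competing preimages coincide pointwise (the eigenvalue being self-conjugate or repeated), so any Borel tie-breaking produces the same value of \(g\) thanks to its \(S_{r}\)-symmetry; concretely, the selection decomposes as a pointwise combination of the measurable indicators of \(\{\Im z>0\}\), \(\{\Im z=0\}\) and of the chosen ordering of ties, which preserves measurability throughout. As a sanity check, specializing \(g\) to the power sums \(p_{k}(z_{1},\dots,z_{r})=\sum_{i}z_{i}^{k}\) recovers the real part identity \(\mathrm{tr}(F(x)^{k})=2\,\Re\,p_{k}(\lambda_{1}^{+}(x),\dots,\lambda_{r}^{+}(x))\), which by itself already gives measurability of the real parts, while the full statement requires precisely the Borel selection argument above.
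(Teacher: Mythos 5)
Your proposal is correct and follows essentially the same route as the paper's proof: reduce via the continuous real-linear embedding $\PhiSymp$, use continuity of singular values and Schatten norms on $\CC^{2r\times 2r}$, observe that the characteristic-polynomial coefficients of $\PhiSymp(f(x))$ are polynomial (hence measurable) in $x$, appeal to the Borel dependence of root multisets on coefficients, and note that selecting the roots in $\{\Im z\ge 0\}$ from conjugate pairs is a Borel operation whose ambiguities are absorbed by the $S_r$-symmetry of $g$. Your additional discussion of the real-axis tie-breaking and the power-sum sanity check is compatible with, and a reasonable amplification of, the paper's terser argument.
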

\MovedProofMarker{p01}{0}{proof:p01}{app:preliminary-proofs}
Quaternion matrix-valued $L^p$ spaces are used to define Toeplitz matrices:
\begin{definition}[\(L^{p}\) spaces and norms]\label{def:LpH-functions}
Given \(1\le p<\infty\) and measurable \(f:D\to\HH^{m\times n}\), the $L^p$ norm of $f$ is
\[
  \|f\|_{L^{p}(D;\HH^{m\times n})}
  :=\Big(\int_{D}\|f(x)\|_{p}^{p}\,dx\Big)^{1/p},
  \qquad
  \|f\|_{L^{\infty}(D;\HH^{m\times n})}
  :=\operatorname*{ess\,sup}_{x\in D}\|f(x)\|.
\]
\val{We denote by \(L^{p}(D;\HH^{m\times n})\) the space of equivalence classes, modulo equality almost everywhere, of measurable functions with finite \(L^{p}\)-norm.}
\end{definition}

\begin{remark}[Compatibility with \(\PhiSymp\)]\label{rem:PhiLp-functional}
By Corollary~\ref{cor:s_p}, for \(1\le p\le\infty\),
\[
  \|f\|_{L^{p}(D;\HH^{m\times n})}
  =2^{-1/p}\,\|\PhiSymp\circ f\|_{L^{p}(D;\CC^{2m\times 2n})},
\]
with the usual convention \(2^{1/\infty}=1\).
\end{remark}



\subsection{Left, right, and sandwich trigonometric polynomials}\label{subsec:trig-sandwich}
\val{We recall single-axis quaternionic trigonometric
polynomials and the facts needed below.
Throughout, \(\I\) denotes the fixed Hamilton unit introduced in
Subsection~\ref{subsec:alg_primitives}, and all quaternionic exponential
kernels are taken in the corresponding slice \(\CC_{\I}\)}. \\
Consider the torus \(\TT^{d}=[-\pi,\pi)^{d}\) endowed with the Lebesgue measure \(d\btheta\).
For \(\bm m=(m_{1},\ldots,m_{d})\in\ZZ^{d}\) and \(\btheta=(\theta_{1},\ldots,\theta_{d})\in\TT^{d}\),
we set \(\langle\bm m,\btheta\rangle:=\sum_{j=1}^{d}m_{j}\theta_{j}\). Due to non-commutativity of the quaternion algebra, we define polynomials that might have the exponential monomial applied on the left, right, or both, to the coefficients. We call the latter case ``sandwich'' if we do not need to specify the exact disposition of variables. \\
\val{For \(d\in\NN\), we use the shorthand \([d]:=\{1,\ldots,d\}\).}
More formally, given an ordered disjoint partition \(S_{L}\dot\cup S_{R}=[d]\), define
\[
  \langle\bm m,\btheta\rangle_{S_{L}}:=\sum_{j\in S_{L}}m_{j}\theta_{j},\qquad
  \langle\bm m,\btheta\rangle_{S_{R}}:=\sum_{j\in S_{R}}m_{j}\theta_{j},
\]
and we denote left/right exponential kernels by
\[
  L_{S_{L}}(\btheta,\bm m):=\exp\bigl(-\I\langle\bm m,\btheta\rangle_{S_{L}}\bigr),\qquad
  R_{S_{R}}(\btheta,\bm m):=\exp\bigl(-\I\langle\bm m,\btheta\rangle_{S_{R}}\bigr).
\]
The purely left and purely right partitions are of special interest. The left case is defined \(L_{[d]}(\btheta,\bm m)=\exp(-\I\langle\bm m,\btheta\rangle)\), \(R_{\varnothing}\equiv 1\).
Purely right is instead \(L_{\varnothing}\equiv 1\), \(R_{[d]}(\btheta,\bm m)=\exp(-\I\langle\bm m,\btheta\rangle)\). \\
The definition of trigonometric polynomials depends on the kernel choice, as described below.

\begin{definition}[Left, right, and sandwich trigonometric polynomials]\label{def:LR-sandwich-trig-functional}
Let \(A_{\bm m}\in\HH^{m\times n}\) and \(\mathcal{F}\subset\ZZ^{d}\) be finite. Define
\[
  p_{\mathrm{L}}(\btheta)=\sum_{\bm m\in\mathcal{F}} e^{-\I\langle\bm m,\btheta\rangle}A_{\bm m},\qquad
  p_{\mathrm{R}}(\btheta)=\sum_{\bm m\in\mathcal{F}} A_{\bm m}e^{-\I\langle\bm m,\btheta\rangle},
\]
and, for a fixed partition \(S_{L}\dot\cup S_{R}=[d]\),
\[
  p^{(S_{L},S_{R})}(\btheta)
  =\sum_{\bm m\in\mathcal{F}} L_{S_{L}}(\btheta,\bm m)\,A_{\bm m}\,R_{S_{R}}(\btheta,\bm m).
\]
We call \(p^{(S_{L},S_{R})}\) a sandwich trigonometric polynomial. The degree is
\(\deg p:=\max_{\bm m\in\mathcal{F}}\|\bm m\|_{\infty}\).
\end{definition}

The Fourier coefficients and partial sums mimic the complex case, taking into account the kernel positions.

\begin{definition}[Sandwich Fourier coefficients and partial sums]\label{def:Fourier-sandwich-functional}
For \(F\in L^{1}(\TT^{d};\HH^{m\times n})\) and \(\bm m\in\ZZ^{d}\), define
\[
  \widehat{F}^{(S_{L},S_{R})}(\bm m)
  :=(2\pi)^{-d}\int_{\TT^{d}}
    L_{S_{L}}(\btheta,\bm m)\,F(\btheta)\,R_{S_{R}}(\btheta,\bm m)\,d\btheta\in\HH^{m\times n}.
\]
For \(N\in\NN\), the rectangular partial sums are
\[
  S_{N}^{(S_{L},S_{R})}F(\btheta)
  :=\sum_{\|\bm m\|_{\infty}\le N}
  \val{
  L_{S_{L}}(\btheta,-\bm m)\,
  \widehat{F}^{(S_{L},S_{R})}(\bm m)\,
  R_{S_{R}}(\btheta,-\bm m)}.
\]
The left and right cases are obtained by
\(S_{L}=[d],S_{R}=\varnothing\) and
\(S_{L}=\varnothing,S_{R}=[d]\), respectively.
\end{definition}

\val{If
\[
p^{(S_L,S_R)}(\btheta)
=
\sum_{\bm m\in\mathcal F}
L_{S_L}(\btheta,\bm m)\,
A_{\bm m}\,
R_{S_R}(\btheta,\bm m),
\]
then
\[
\widehat p^{(S_L,S_R)}(\bm k)=A_{-\bm k},
\]
where coefficients outside \(\mathcal F\) are understood to be zero.}

\begin{lemma}[Transport under the symplectic embedding]\label{lem:Phi-sandwich-functional}
Write \(F(\btheta)=Z(\btheta)+W(\btheta)\J\) with \(Z,W:\TT^{d}\to\CC_{\I}^{m\times n}\).
Define the sign-flip \(\tilde{\bm m}\in\ZZ^{d}\) by
\(\tilde m_{j}=m_{j}\) if \(j\in S_{L}\) and \(\tilde m_{j}=-m_{j}\) if \(j\in S_{R}\).
Then, for every \(\bm m\in\ZZ^{d}\),
\[
  \PhiSymp\!\bigl(\widehat{F}^{(S_{L},S_{R})}(\bm m)\bigr)
  =\begin{bmatrix}
      \widehat{Z}(\bm m) & \widehat{W}(\tilde{\bm m})\\[2pt]
      -\widehat{\overline{W}}(-\tilde{\bm m}) & \widehat{\overline{Z}}(-\bm m)
    \end{bmatrix},
\]
where the hats on the right denote ordinary complex matrix Fourier coefficients on \(\TT^{d}\).
\end{lemma}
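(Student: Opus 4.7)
The plan is to split $F=Z+W\J$ inside the sandwich integral, simplify the integrand into the canonical $\CC_{\I}+\CC_{\I}\J$ form, and then apply the definition \eqref{eq:symp_embed} of $\PhiSymp$ together with the elementary conjugation identity for the ordinary complex Fourier transform. This reduces the lemma to two ingredients already in hand: the commutation rule $\J\alpha=\overline{\alpha}\J$ for $\alpha\in\CC_{\I}$, and the fact that complex conjugation of Fourier coefficients corresponds to reflecting the frequency index.

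First I would substitute $F(\btheta)=Z(\btheta)+W(\btheta)\J$ into $\widehat{F}^{(S_{L},S_{R})}(\bm m)$. The integrand splits as $L_{S_{L}}\,Z\,R_{S_{R}}+L_{S_{L}}\,W\,\J\,R_{S_{R}}$. Since $L_{S_{L}}(\btheta,\bm m)$ and $R_{S_{R}}(\btheta,\bm m)$ are scalars in the commutative slice $\CC_{\I}$ and $Z(\btheta)\in\CC_{\I}^{m\times n}$, all factors in the first piece commute, producing the kernel $e^{-\I\langle\bm m,\btheta\rangle}$ multiplying $Z(\btheta)$; integration over $\TT^{d}$ yields $\widehat{Z}(\bm m)$.

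For the second piece the key move is to push $\J$ past $R_{S_{R}}$ using identity \eqref{eq:comm}:
\[
\J\,R_{S_{R}}(\btheta,\bm m)=\overline{R_{S_{R}}(\btheta,\bm m)}\,\J=e^{+\I\langle\bm m,\btheta\rangle_{S_{R}}}\,\J.
\]
Combined with $L_{S_{L}}$ this produces the exponential $e^{-\I(\langle\bm m,\btheta\rangle_{S_{L}}-\langle\bm m,\btheta\rangle_{S_{R}})}=e^{-\I\langle\tilde{\bm m},\btheta\rangle}$ with exactly the sign flip $\tilde{\bm m}$ of the statement, and since $W(\btheta)$ commutes with $\CC_{\I}$-valued scalars, integration returns $\widehat{W}(\tilde{\bm m})\,\J$. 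Altogether, $\widehat{F}^{(S_{L},S_{R})}(\bm m)=\widehat{Z}(\bm m)+\widehat{W}(\tilde{\bm m})\,\J$ in the orthogonal slice decomposition \eqref{eq:orth_dec}.

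It remains to apply \eqref{eq:symp_embed} with $z=\widehat{Z}(\bm m)$ and $w=\widehat{W}(\tilde{\bm m})$, which produces the required $2\times 2$ block with $-\overline{\widehat{W}(\tilde{\bm m})}$ and $\overline{\widehat{Z}(\bm m)}$ in the lower row. The elementary identity $\overline{\widehat{G}(\bm k)}=\widehat{\overline{G}}(-\bm k)$ for $G\in L^{1}(\TT^{d};\CC_{\I}^{m\times n})$, immediate by conjugating the defining integral, converts these into $\widehat{\overline{W}}(-\tilde{\bm m})$ and $\widehat{\overline{Z}}(-\bm m)$, matching the stated formula. The one place where care is needed is applying \eqref{eq:comm} to the \emph{scalar kernel} $R_{S_{R}}$ rather than to the matrix factor $W$ via \eqref{eq:comm_mat}: the sign flip must arise from the right-acting exponential, and tracking which quantity gets conjugated is the only point at which a sign could erroneously flip.
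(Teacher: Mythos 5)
Your proof is correct and takes essentially the same approach as the paper: both use $\J\alpha=\overline{\alpha}\J$ to migrate $\J$ past the right-acting exponential to produce the sign-flipped frequency $\tilde{\bm m}$, arriving at $\widehat{F}^{(S_L,S_R)}(\bm m)=\widehat{Z}(\bm m)+\widehat{W}(\tilde{\bm m})\J$, and then feed this through $\PhiSymp$. The only cosmetic difference is order of operations — you integrate first and then apply $\PhiSymp$ and the conjugation-reflection identity $\overline{\widehat{G}(\bm k)}=\widehat{\overline{G}}(-\bm k)$, whereas the paper applies $\PhiSymp$ to the simplified integrand and integrates entrywise — but since $\PhiSymp$ is a continuous real-linear map these commute, and both yield identical results.
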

\MovedProofMarker{p02}{0}{proof:p02}{app:preliminary-proofs}

In addition, as it can be easily checked via the embedding trick, the following result holds.

\begin{proposition}[Density in \(L^{p}\)]\label{prop:density-sandwich-functional}
For \(1\le p<\infty\), the linear span of sandwich trigonometric polynomials
\(\{p^{(S_{L},S_{R})}\}\) is dense in \(L^{p}(\TT^{d};\HH^{m\times n})\).
\end{proposition}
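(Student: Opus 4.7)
The plan is to reduce to the classical $L^{p}$ density of complex trigonometric polynomials via the orthogonal slice decomposition $F=Z+W\J$ with $Z,W\in L^{p}(\TT^{d};\CC_{\I}^{m\times n})$ and an explicit identification of sandwich polynomials with pairs of complex trigonometric polynomials. The $L^{p}$ integrability of $Z$ and $W$ follows from $F\in L^{p}(\TT^{d};\HH^{m\times n})$ since all matrix norms on a finite-dimensional space are equivalent and every real component of every entry of $F$ is then in $L^{p}$; equivalently, $Z$ and $W$ appear (up to a permutation) as $m\times n$ blocks of $\PhiSymp(F)\in\CC^{2m\times 2n}$, and a submatrix has Schatten $p$-norm bounded by the full matrix, so Corollary~\ref{cor:s_p} delivers the required integrability.

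The central algebraic step is to show that a sandwich polynomial associated with the fixed partition $(S_{L},S_{R})$ is precisely an expression of the form $P+Q\J$ with $P,Q$ arbitrary complex trigonometric polynomials valued in $\CC_{\I}^{m\times n}$. Writing $A_{\bm m}=Z_{\bm m}+W_{\bm m}\J$ and pushing $\J$ past $R_{S_{R}}(\btheta,\bm m)$ via $\J\alpha=\overline{\alpha}\J$ (using that the entries of $Z_{\bm m},W_{\bm m}$ commute with scalar complex exponentials), one obtains
\begin{equation*}
L_{S_{L}}(\btheta,\bm m)\,A_{\bm m}\,R_{S_{R}}(\btheta,\bm m)
= Z_{\bm m}\,e^{-\I\langle\bm m,\btheta\rangle}
+ W_{\bm m}\,e^{-\I\langle\tilde{\bm m},\btheta\rangle}\,\J,
\end{equation*}
with $\tilde{\bm m}$ the partition sign-flip from Lemma~\ref{lem:Phi-sandwich-functional}. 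Summing over a finite frequency set $\cF\subset\ZZ^{d}$ and exploiting the bijection $\bm m\mapsto\tilde{\bm m}$ of $\ZZ^{d}$, the $\CC_{\I}$-part and the $\J$-part of any sandwich polynomial are independently arbitrary complex trigonometric polynomials.

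Given $\varepsilon>0$, the classical density of complex trigonometric polynomials in $L^{p}(\TT^{d};\CC^{m\times n})$ (valid for $1\le p<\infty$) supplies $P_{\varepsilon},Q_{\varepsilon}$ with $\|Z-P_{\varepsilon}\|_{L^{p}},\|W-Q_{\varepsilon}\|_{L^{p}}<\varepsilon$. Define $p^{(S_{L},S_{R})}_{\varepsilon}:=P_{\varepsilon}+Q_{\varepsilon}\J$, which is a sandwich polynomial by the previous paragraph. The pointwise Schatten triangle inequality, together with the unitary-invariance identity $\|B\J\|_{p}=\|B\|_{p}$ (since $\J$ is a unit quaternion, so $\J I_{n}$ is quaternion unitary), gives
\begin{equation*}
\|F(\btheta)-p^{(S_{L},S_{R})}_{\varepsilon}(\btheta)\|_{p}
\le \|Z(\btheta)-P_{\varepsilon}(\btheta)\|_{p}
+ \|W(\btheta)-Q_{\varepsilon}(\btheta)\|_{p},
\end{equation*}
and Minkowski's inequality in $L^{p}$ yields $\|F-p^{(S_{L},S_{R})}_{\varepsilon}\|_{L^{p}(\HH)}\le 2\varepsilon$, completing the density argument. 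The only substantive step is the algebraic identification in the second paragraph, a finite-dimensional instance of Lemma~\ref{lem:Phi-sandwich-functional}; keeping track of the sign flip $\bm m\mapsto\tilde{\bm m}$ is the single place where noncommutativity enters, after which the reduction to the classical complex case (e.g.\ via Fej\'er kernels) is automatic.
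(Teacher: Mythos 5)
Your proof is correct and follows exactly the route the paper indicates with its remark that the result "can be easily checked via the embedding trick": you decompose $F = Z + W\J$ along the slice $\CC_{\I}$, use the reduction identity (already established as equation~\eqref{eq:poly-reduction} in Proposition~\ref{prop:reduction-right}) to identify sandwich polynomials with pairs of complex trigonometric polynomials, and then invoke classical complex $L^{p}$ density together with the unitary invariance of the Schatten norms. The paper leaves this unwritten; your argument simply fills in the details of the same approach, so there is nothing to flag.
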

\MovedProofMarker{p24}{0}{proof:p24}{app:preliminary-proofs}

In the single-axis setting, using \eqref{eq:comm} to switch from different trigonometric classes is standard, and rectangular matrix-valued polynomials follow analogous arguments. In the signal analysis setting, this can be viewed as a special and trivial case of the splitting technique of general discrete Fourier transforms with two axes
(see \cite{Hitzer2007}), to convert sandwich kernels to a unidirectional one, such as the classical right kernel. For matrix-valued polynomials and Fourier coefficients, we indeed have

\begin{proposition}[Reduction to the right case]\label{prop:reduction-right}
Let \(S_{L}\dot\cup S_{R}=[d]\) and define
\[
\tilde m_{j}=
\begin{cases}
m_{j}, & j\in S_{L},\\
-m_{j}, & j\in S_{R}.
\end{cases}
\]
For finite \(\mathcal{F}\subset\ZZ^{d}\) and \(A_{\bm m}\in\HH^{m\times n}\), write \(A_{\bm m}=Z_{\bm m}+W_{\bm m}\J\) with \(Z_{\bm m},W_{\bm m}\in\CC_{\I}^{m\times n}\). With \(L_{S_{L}},R_{S_{R}}\) as in Definitions \ref{def:LR-sandwich-trig-functional}, \ref{def:Fourier-sandwich-functional}, the sandwich polynomial
\[
p^{(S_{L},S_{R})}(\btheta)
=\sum_{\bm m\in\mathcal{F}}L_{S_{L}}(\btheta,\bm m)\,A_{\bm m}\,R_{S_{R}}(\btheta,\bm m)
\]
reduces to a sum of two right polynomials (in the slice \(\CC_{\I}\)):
\begin{equation}\label{eq:poly-reduction}
p^{(S_{L},S_{R})}(\btheta)
=\sum_{\bm m\in\mathcal{F}}\Bigl(Z_{\bm m}e^{-\I\langle\bm m,\btheta\rangle}
+W_{\bm m}e^{-\I\langle\tilde{\bm m},\btheta\rangle}\J\Bigr).
\end{equation}
Moreover, for \(F=Z+W\J\in L^{1}(\TT^{d};\HH^{m\times n})\) with \(Z,W:\TT^{d}\to\CC_{\I}^{m\times n}\), the sandwich Fourier coefficients
\[
\widehat{F}^{(S_{L},S_{R})}(\bm m)
=(2\pi)^{-d}\int_{\TT^{d}}L_{S_{L}}(\btheta,\bm m)\,F(\btheta)\,R_{S_{R}}(\btheta,\bm m)\,d\btheta
\]
satisfy
\begin{equation}\label{eq:coeff-reduction}
\widehat{F}^{(S_{L},S_{R})}(\bm m)=\widehat{Z}(\bm m)+\widehat{W}(\tilde{\bm m})\,\J,
\end{equation}
where the hats on the right are the usual complex (right) Fourier coefficients on \(\TT^{d}\) in the slice \(\CC_{\I}\). In particular,
\begin{equation}\label{eq:left-right}
\widehat{F}_{\mathrm{L}}(\bm m)=\overline{\,\widehat{\,\overline{F}\,}_{\mathrm{R}}(-\bm m)\,}.
\end{equation}
\end{proposition}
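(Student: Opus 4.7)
The plan is to prove all three assertions by direct computation using the two commutation identities \(\J\alpha=\overline{\alpha}\,\J\) for \(\alpha\in\CC_{\I}\) and its matrix-valued version \eqref{eq:comm_mat}, together with the scalar identity
\[
\langle\bm m,\btheta\rangle_{S_L}-\langle\bm m,\btheta\rangle_{S_R}=\langle\tilde{\bm m},\btheta\rangle,
\]
which is immediate from the definition of \(\tilde{\bm m}\). These are the same ingredients used in the proof of Lemma~\ref{lem:Phi-sandwich-functional}; the only technical subtlety is that the coefficients \(A_{\bm m}\) (resp. the function \(F\)) are matrix-valued, so one must respect both the order of multiplication and the anti-involutive rule \(\overline{AB}=\overline{B}\,\overline{A}\).

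First, I would prove \eqref{eq:poly-reduction}. Fix \(\bm m\in\mathcal{F}\) and split \(A_{\bm m}=Z_{\bm m}+W_{\bm m}\J\). Since \(L_{S_L}(\btheta,\bm m)\) and \(R_{S_R}(\btheta,\bm m)\) are scalars in \(\CC_{\I}\), they commute with any matrix in \(\CC_{\I}^{m\times n}\). Hence
\[
L_{S_L}(\btheta,\bm m)\,Z_{\bm m}\,R_{S_R}(\btheta,\bm m)=Z_{\bm m}\,e^{-\I\langle\bm m,\btheta\rangle_{S_L}}\,e^{-\I\langle\bm m,\btheta\rangle_{S_R}}=Z_{\bm m}\,e^{-\I\langle\bm m,\btheta\rangle}.
\]
For the \(W_{\bm m}\J\)-piece, I push \(\J\) through \(R_{S_R}\) using \eqref{eq:comm_mat}: \(\J\,e^{-\I\langle\bm m,\btheta\rangle_{S_R}}=e^{+\I\langle\bm m,\btheta\rangle_{S_R}}\J\), which combined with the left exponential yields \(W_{\bm m}\,e^{-\I\langle\tilde{\bm m},\btheta\rangle}\J\). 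Summing over \(\bm m\in\mathcal{F}\) gives \eqref{eq:poly-reduction}.

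Next, for \eqref{eq:coeff-reduction} I apply the same pointwise manipulation inside the integral defining \(\widehat{F}^{(S_{L},S_{R})}(\bm m)\), writing \(F(\btheta)=Z(\btheta)+W(\btheta)\J\) and splitting the integrand into \(Z(\btheta)e^{-\I\langle\bm m,\btheta\rangle}+W(\btheta)e^{-\I\langle\tilde{\bm m},\btheta\rangle}\J\). Linearity of the integral (and the fact that the constant matrix \(\J\) can be pulled out on the right) then yields \(\widehat{Z}(\bm m)+\widehat{W}(\tilde{\bm m})\J\). For \eqref{eq:left-right} I specialize to \(S_L=[d]\), \(S_R=\varnothing\) (giving \(\widehat{F}_L\)) and compare with \(\widehat{\overline{F}}_R(-\bm m)=(2\pi)^{-d}\int_{\TT^d}\overline{F}(\btheta)\,e^{+\I\langle\bm m,\btheta\rangle}\,d\btheta\); conjugating this integral and using \(\overline{\overline{F}(\btheta)\,e^{+\I\langle\bm m,\btheta\rangle}}=e^{-\I\langle\bm m,\btheta\rangle}F(\btheta)\) (anti-involutive product and \(\overline{e^{+\I t}}=e^{-\I t}\) in \(\CC_{\I}\)) brings it back to \(\widehat{F}_L(\bm m)\).

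The only genuine point of care — the ``hard part'', if any — is bookkeeping: one must systematically track which factors live in \(\CC_{\I}\) (where they commute with complex matrices and satisfy \(\J\alpha=\overline{\alpha}\J\)) versus which are matrices carrying a \(\J\) on the right, and one must remember to reverse order under conjugation. Apart from this, every step is a one-line scalar manipulation followed by integration; no analytic or approximation argument is needed because \(\mathcal{F}\) is finite in the polynomial case and because all identities are pointwise-a.e.\ valid under the \(L^{1}\) hypothesis used for the Fourier coefficients.
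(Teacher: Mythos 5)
Your proposal is correct and follows essentially the same route as the paper's own proof: split $A_{\bm m}$ (resp.\ $F$) as $Z+W\J$, use $\J\alpha=\overline{\alpha}\J$ to commute $\J$ past the $\CC_{\I}$-valued right kernel, combine the exponents into $\langle\bm m,\btheta\rangle$ and $\langle\tilde{\bm m},\btheta\rangle$, and integrate entrywise; for \eqref{eq:left-right} you simply run the paper's conjugation argument in the reverse direction, which is equivalent. The only minor omission is that you do not explicitly note $Z,W\in L^{1}$ (justifying the entrywise integration), which the paper records by observing that $F\mapsto(Z,W)$ is a bounded linear map, but this is a formality and does not affect correctness.
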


\MovedProofMarker{p03}{0}{proof:p03}{app:preliminary-proofs}
\begin{remark}
The Proposition applied to the purely left kernel gives us
\(\widehat{F}_{\mathrm R}(\bm m)=\overline{\,\widehat{\,\overline{F}\,}_{\mathrm L}(-\bm m)\,}\).
\end{remark}

\section{Asymptotic spectral distribution of quaternion matrix sequences}
\label{sec:q_asymptotic_spectral}

In this section we present the asymptotic spectral and singular value distribution formalism for quaternion matrix sequences. We adopt the point of view of Theorem \ref{thm:canonical} and Proposition \ref{prop:phi_dup_sv}. Regarding the spectral analysis, canonical eigenvalues encode the eigenvalue information of quaternion matrices, while, for singular values, the analysis reduces to the complex case under the symplectic embedding \(\PhiSymp\). This allows us to inherit the well-developed framework for asymptotic spectral distributions of complex matrix sequences as developed in \cite{GLTblock2,extr2,garoni_multilevel_2018,serra-capizzano_unitarily_2002,tyrtyshnikov1998} and references therein. Throughout, eigenvalues are listed with algebraic multiplicity and singular values are listed in nonincreasing order. \val{By Lemma~\ref{lem:borel-eigenvalue-enumeration}, the eigenvalues of
a measurable complex matrix field admit a Borel enumeration, while the
ordered singular values depend continuously on the matrix entries.}

\medskip
\noindent\textbf{Eigenvalue distributions via the symplectic embedding.}

\begin{definition}[Embedding-based spectral symbol]\label{def:emb-symbol}
Let \(A_n\in\HH^{d_n\times d_n}\) be square quaternionic matrices with \(d_n\to\infty\). Let \(D\subset\RR^k\) be measurable with \(0<|D|<\infty\), and let \(G:D\to \CC^{q\times q}\) be a measurable matrix-valued function. We say that the quaternionic sequence \(\{A_n\}_n\) is \val{distributed in the sense of the eigenvalues as \(G\) over \(D\)}, and we write
\[
   \{A_n\}_n \sim_{\lambda} (G,D),
\]
if and only if the complex embeddings satisfy \(\{\PhiSymp(A_n)\,\}_n \sim_{\lambda} (G,D)\) in the classical sense. Concretely, for every \(F\in C_c(\CC)\),
\begin{equation}\label{eq:def-ev-emb}
   \lim_{n\to\infty}\frac{1}{2d_n}\sum_{j=1}^{2d_n}
      F\!\bigl(\lambda_j(\PhiSymp(A_n))\bigr)
   \;=\; \frac{1}{|D|}\int_D \frac{1}{q}\sum_{i=1}^{q}
      F\!\bigl(\lambda_i(G(x))\bigr)\,dx,
\end{equation}
where the eigenvalues \(\lambda_j(\PhiSymp(A_n))\) and \(\lambda_i(G(x))\) are listed with algebraic multiplicity.
\end{definition}

Let \(\lambda_1^{+}(A_n),\dots,\lambda_{d_n}^{+}(A_n)\) denote the canonical eigenvalues of \(A_n\), \val{as defined in Theorem~\ref{thm:canonical}}. Then the eigenvalues of \(\PhiSymp(A_n)\) occur in conjugate pairs \(\{z,\overline z\}\); real eigenvalues appear as pairs \(\{z,z\}\). Therefore, for every \(F\in C_c(\CC)\),
\begin{equation}\label{eq:norm-ev-can}
\frac{1}{2d_n}\sum_{j=1}^{2d_n}F\bigl(\lambda_j(\PhiSymp(A_n))\bigr)
\;=\;
\frac{1}{d_n}\sum_{j=1}^{d_n}\widetilde F\bigl(\lambda^{+}_j(A_n)\bigr),
\qquad
\widetilde F(z):=\tfrac12\bigl(F(z)+F(\overline z)\bigr).
\end{equation}
Equation \eqref{eq:norm-ev-can} shows that the embedding-based definition averages each conjugate pair \(\{z,\overline z\}\) via \(\widetilde F\). In particular, the two sides coincide when \(F\) is conjugation-symmetric, i.e., \(F(\overline z)=F(z)\).

\medskip
\noindent\textbf{Alternative quaternion-side formulation for eigenvalues.}
 If an entirely quaternionic formulation is preferred, then the following definition can be taken into consideration.

\begin{definition}[Canonical-eigenvalue distribution]\label{def:can-ev-symbol}
Let \(A_n\in\HH^{d_n\times d_n}\) with \(d_n\to\infty\), and let \(\lambda_1^{+}(A_n),\dots,\lambda_{d_n}^{+}(A_n)\) be the canonical eigenvalues as in Theorem~\ref{thm:canonical}. Let \(D\subset\RR^k\) be measurable with \(0<|D|<\infty\), and let \(G\in L^0(D;\CC^{q\times q})\). We write that \(\{A_n\}\sim_{\lambda^{+}}(G,D)\) if, for every \(F\in C_c(\CC)\) with \(F(\overline z)=F(z)\),
\[
\lim_{n\to\infty}\frac{1}{d_n}\sum_{j=1}^{d_n}F\bigl(\lambda_j^{+}(A_n)\bigr)
\;=\;
\frac{1}{|D|}\int_D \frac{1}{q}\sum_{i=1}^q F\bigl(\lambda_i(G(x))\bigr)\,dx.
\]
\end{definition}

\begin{remark}\label{rem:eq-emb-can}
By \eqref{eq:norm-ev-can}, if $\{A_n\}\sim_{\lambda}(G,D)$ in the sense of Definition~\ref{def:emb-symbol}, then $\{A_n\}\sim_{\lambda^{+}}(G,D)$ in the sense of Definition~\ref{def:can-ev-symbol}. Conversely, if for a.e.\ $x\in D$ the multiset $\{\lambda_i(G(x))\}_{i=1}^{q}$ is invariant under complex conjugation, then $\{A_n\}\sim_{\lambda^{+}}(G,D)$ implies $\{A_n\}\sim_{\lambda}(G,D)$.
\end{remark}

\begin{remark}[\val{Measure-preserving reparametrizations}]
\val{Let \(\varphi:D\to D\) be measurable and measure-preserving, meaning that}
\val{\[
|\varphi^{-1}(E)|=|E|
\]}
\val{for every measurable set \(E\subseteq D\). Then}
\val{\[
\{A_n\}_n\sim_{\lambda}(G,D)
\quad\Longleftrightarrow\quad
\{A_n\}_n\sim_{\lambda}(G\circ\varphi,D),
\]}
\val{The analogous equivalence holds for \(\sim_{\lambda^{+}}\), with the
conjugation-symmetric test functions required in
Definition~\ref{def:can-ev-symbol}. The measure-preserving property leaves
the integrals in the defining limit relations unchanged when \(G\) is replaced
by \(G\circ\varphi\). Hence, an eigenvalue symbol is determined
only up to measure-preserving reparametrization of its spatial variable.}
\end{remark}

\medskip
\noindent\textbf{Singular value distributions via the symplectic embedding.}

\begin{definition}[Embedding-based singular value symbol]\label{def:sv-symbol-rect}
Let \(A_n\in\HH^{m_n\times n_n}\) be rectangular quaternionic matrices and set \(r_n:=\min\{m_n,n_n\}\to\infty\). Let \(D\subset\RR^k\) be measurable with \(0<|D|<\infty\), and let \(H\in L^0(D;\CC^{s\times t})\) be measurable. We write that \(\{A_n\}_n\) is \val{distributed in the sense of the singular values as \(H\) over \(D\)}, and we write
\[
   \{A_n\}_n \sim_{\sigma} (H,D),
\]
if and only if \(\{\,\PhiSymp(A_n)\,\}_n \sim_{\sigma} (H,D)\) in the classical sense, i.e., for every \(\psi\in C_c([0,\infty))\),
\begin{equation}\label{eq:def-sv-emb}
   \lim_{n\to\infty}\frac{1}{2r_n}
   \sum_{i=1}^{2r_n}
      \psi\!\bigl(\sigma_i(\PhiSymp(A_n))\bigr)
   \;=\; \frac{1}{|D|}\int_D \frac{1}{\min\{s,t\}}
      \sum_{i=1}^{\min\{s,t\}}\psi\!\bigl(\sigma_i(H(x))\bigr)\,dx.
\end{equation}
\end{definition}

\val{The same invariance under measure-preserving reparametrizations holds for
singular-value symbols.}

\begin{remark}[Duplication of singular values under the embedding]\label{rem:dup-sv}
By Proposition \ref{prop:phi_dup_sv}, the singular values of \(\PhiSymp(A_n)\) are exactly those of \(A_n\), each duplicated. \val{Consequently, for every \(\psi\in C_c([0,\infty))\),}

\begin{equation}\label{eq:norm-sv}
\frac{1}{2r_n}\sum_{i=1}^{2r_n}
\psi\bigl(\sigma_i(\PhiSymp(A_n))\bigr)
\;=\;
\frac{1}{r_n}\sum_{i=1}^{r_n}
\psi\bigl(\sigma_i(A_n)\bigr),
\qquad \forall\,\psi\in C_c([0,\infty)).
\end{equation}
\val{Thus the empirical singular-value distribution may be evaluated directly
on the quaternionic matrices.}
\end{remark}

\subsection{Definition of a.c.s.}
We introduce the notion of rectangular a.c.s. over \(\HH\). Throughout this section, \(\|\cdot\|\) denotes the operator (spectral) norm on quaternionic matrices (cf.\ Definition~\ref{def:schatten} for Schatten \(p\)-norms).

\begin{definition}[Approximating class of sequences]\label{def:rect-acs}
Let \(\{A_{n}\}_n\) be a matrix sequence with \(A_n \in \HH^{d_{n}\times e_{n}}\) and set
\(r_n:=d_n\wedge e_n \to \infty\).
\val{A class of matrix sequences
\(\bigl\{\{B_{n,m}\}_{n}\bigr\}_{m\in\NN}\)} with \(B_{n,m}\in\HH^{d_{n}\times e_{n}}\)
is an \emph{approximating class of sequences} for \(\{A_{n}\}_{n}\)
if, for every fixed \(m\), there exist \(R_{n,m},N_{n,m}\in\HH^{d_{n}\times e_{n}}\) and
\(n_m\) such that, for all \(n\ge n_m\),
\begin{align}
A_{n}&=B_{n,m}+R_{n,m}+N_{n,m},\label{eq:rect-acs-decomp}\\
\frac{\operatorname{rank}_{\HH}(R_{n,m})}{r_{n}}&\le c(m),\qquad
\|N_{n,m}\|\le \omega(m),\label{eq:rect-acs-bounds}
\end{align}
where \(c(m)\to0\) and \(\omega(m)\to0\) as \(m\to\infty\).
We write \(\{B_{n,m}\}_n \xrightarrow[\HH]{\mathrm{a.c.s.}} \{A_n\}_n\) when the quaternion side must be explicit, otherwise \(\{B_{n,m}\}_n \xrightarrow{\mathrm{a.c.s.}} \{A_n\}_n\), and it must be emphasized that the limit is taken with respect to the external variable $m$.
\end{definition}
\noindent\val{The term
\emph{approximating class of sequences} is standard in the analysis of matrix
sequences. Equivalently, a class of matrix sequences may be viewed as a sequence of matrix
sequences.}\par
\noindent\val{For multi-indices, we adopt the conventions of
\cite[Sections~2.1.2-2.1.3]{garoni_multilevel_2018}. Thus inequalities
between multi-indices are componentwise, lexicographic order is used to
enumerate entries, and
\[
\bn\to\infty
\quad\Longleftrightarrow\quad
\min_{1\le\ell\le d}n_\ell\to\infty.
\]
A multilevel matrix sequence extracted from a family
\(\{A_{\bn}\}_{\bn\in\NN^d}\) is any scalar-indexed sequence
\(\{A_{\bn(n)}\}_n\) with \(\bn(n)\to\infty\). All a.c.s. statements below
for multi-indexed families are understood along every such extraction, while
the external approximation index \(m\in\NN\) remains scalar.}\par
\val{In complex matrix sequences, Definition~\ref{def:rect-acs} applies verbatim with
\(\HH\) and \(\operatorname{rank}_{\HH}\) replaced by
\(\CC\) and \(\operatorname{rank}_{\CC}\), respectively.} \\
\smallskip

For the properties of a.c.s. we need the following definition\val{, where for a finite set \(E\), we denote its cardinality by \(\#E\).}

\begin{definition}[Sparsely unbounded quaternion matrix sequences]\label{def:suH}
Let $A_n\in\HH^{m_n\times n_n}$ be (rectangular) quaternionic matrices and set
$r_n:=\min\{m_n,n_n\}\to\infty$. Denote the singular values of $A_n$
(by definition, the square roots of the eigenvalues of $A_n^*A_n$,
with $^*$ the quaternionic conjugate transpose) in nonincreasing order by
$\sigma_1(A_n)\ge\cdots\ge\sigma_{r_n}(A_n)\ge0$.
We write that the sequence $\{A_n\}_n$ is \emph{sparsely unbounded} (s.u.) if
there exists a function $r:(0,\infty)\to[0,1]$ with $\lim_{M\to\infty} r(M)=0$
such that, for every $M>0$, there is $n_M$ with the property that, for all
$n\ge n_M$,
\[
\frac{1}{r_n}\,\#\Bigl\{\,i\in\{1,\dots,r_n\}:\ \sigma_i(A_n)>M\,\Bigr\}
\;\le\; r(M).
\]
\end{definition}

For later use we record the basic norm/rank relations induced by the symplectic embedding.
\noindent
\val{For positive-integer sequences
\(a=(a_n)\), \(b=(b_n)\), and \(c=(c_n)\), write
\[
\operatorname{PC}(a,b,c)
\quad\Longleftrightarrow\quad
(a_n\wedge b_n)+(b_n\wedge c_n)
=
O(a_n\wedge c_n)
\qquad(n\to\infty).
\]
For complex matrix sequences, sparse unboundedness is defined exactly as
in Definition~\ref{def:suH}, with \(\HH\) and
\(\operatorname{rank}_{\HH}\) replaced by \(\CC\) and
\(\operatorname{rank}_{\CC}\), respectively.}

\begin{lemma}[Norm and rank under \(\PhiSymp\)]\label{lem:norm-rank-phi}
For all \(X\in\HH^{d\times e}\) and \(1\le p\le\infty\),
\[
\|\PhiSymp(X)\|=\|X\|,\qquad
\|\PhiSymp(X)\|_{S_p}=2^{1/p}\|X\|_{p},\qquad
\operatorname{rank}_{\CC}\!\bigl(\PhiSymp(X)\bigr)=2\,\operatorname{rank}_{\HH}(X).
\]
\end{lemma}
\MovedProofMarker{p04}{0}{proof:p04}{app:preliminary-proofs}

Since \(\PhiSymp\) preserves the operator norm and duplicates nonzero singular values, the following equivalence with the complex a.c.s.\ framework (see \cite{capizzano_distribution_2001}) holds.

\begin{theorem}[Embedding equivalence]\label{thm:rect-embedding}
\begingroup
\color{red}
Let \(\{A_n\}_n\) be a matrix sequence with
\(A_n\in\HH^{d_n\times e_n}\) and
\(r_n:=d_n\wedge e_n\to\infty\), and let
\(\bigl\{\{B_{n,m}\}_n\bigr\}_{m\in\NN}\) be given, with
\(B_{n,m}\in\HH^{d_n\times e_n}\).
Then the following are equivalent:
\begin{enumerate}
\item[\textup{(a)}]
\(\bigl\{\{B_{n,m}\}_n\bigr\}_{m\in\NN}\) is a quaternionic a.c.s.\ for
\(\{A_n\}_n\).

\item[\textup{(b)}]
\(\bigl\{\{\PhiSymp(B_{n,m})\}_n\bigr\}_{m\in\NN}\) is a complex a.c.s.\ for
\(\{\PhiSymp(A_n)\}_n\).
\end{enumerate}
\endgroup
\end{theorem}

\MovedProofMarker{p05}{1}{proof:p05}{app:preliminary-proofs}

\textbf{Consequences.}
\val{Together, Theorem~\ref{thm:complex-acs-properties} and the embedding equivalence in
Theorem~\ref{thm:rect-embedding} yield the following quaternionic counterparts,
with $r_n=d_n\wedge e_n$:}

\begin{enumerate}
\item[\textup{ACS\(_\HH\)1.}]
\(\{A_n\}\sim_{\sigma} F\) if and only if there exist matrix sequences
\(\{B_{n,m}\}_n\sim_{\sigma} F_m\) with
\(\{B_{n,m}\}_n \xrightarrow[\HH]{\mathrm{a.c.s.}} \{A_n\}_n\) and \(F_m\to F\) in measure.

\item[\textup{ACS\(_\HH\)2.}]
If every \(A_n\) is Hermitian and square, then
\(\{A_n\}\sim_{\lambda} f\) if and only if there exist Hermitian sequences
\(\{B_{n,m}\}_n\sim_{\lambda} f_m\) with
\(\{B_{n,m}\}_n \xrightarrow[\HH]{\mathrm{a.c.s.}} \{A_n\}_n\) and \(f_m\to f\) in measure.

\item[\textup{ACS\(_\HH\)3.}]
\val{Let
\(A_n,B_{n,m}\in\HH^{d_n\times e_n}\) and
\(A'_n,B'_{n,m}\in\HH^{d'_n\times e'_n}\).
Assume
\[
\{B_{n,m}\}_n
\xrightarrow[\HH]{\mathrm{a.c.s.}}
\{A_n\}_n,
\qquad
\{B'_{n,m}\}_n
\xrightarrow[\HH]{\mathrm{a.c.s.}}
\{A'_n\}_n.
\]
Then}
\begin{align*}
&\{B_{n,m}^*\}_n
\xrightarrow[\HH]{\mathrm{a.c.s.}}
\{A_n^*\}_n,\\
&\{\alpha B_{n,m}+\beta B'_{n,m}\}_n
\xrightarrow[\HH]{\mathrm{a.c.s.}}
\{\alpha A_n+\beta A'_n\}_n
\quad
\val{\text{if }(d'_n,e'_n)=(d_n,e_n),}\\
&\{B_{n,m}B'_{n,m}\}_n
\xrightarrow[\HH]{\mathrm{a.c.s.}}
\{A_nA'_n\}_n
\quad
\val{\text{if }d'_n=e_n,\
\{A_n\}_n,\{A'_n\}_n\text{ are s.u., and }
\operatorname{PC}(d,e,e'),}\\
&\{B_{n,m}C_n\}_n
\xrightarrow[\HH]{\mathrm{a.c.s.}}
\{A_nC_n\}_n
\quad
\val{\text{if }C_n\in\HH^{e_n\times f_n},\
\{C_n\}_n\text{ is s.u., and }
\operatorname{PC}(d,e,f).}
\end{align*}
\val{Here \(\alpha,\beta\in\HH\) act on the left, and each product
a.c.s.\ is normalized by the minimum dimension of the product output.
The corresponding \(\operatorname{PC}\) condition ensures that this
minimum dimension tends to infinity.} \\
\val{The rectangular a.c.s.\ normalization is that of
\cite[Def.~2.6]{rectangular_glt_2022}. For the classical common-size
a.c.s.\ product proof, see
\cite[Prop.~5.5, pp.~87-88]{garoni_Toeplitz_2017}; for fixed-block
rectangular GLT multiplication, see
\cite[Thm.~4.5]{rectangular_glt_2022}. The varying-aspect-ratio
rectangular a.c.s.\ product assertions above follow from the same argument,
with the output-rank normalization controlled by
\(\operatorname{PC}\).}

\MovedProofMarker{p25}{0}{proof:p25}{app:preliminary-proofs}

\item[\textup{ACS\(_\HH\)4.}]
If \(A_n=[A_{n,ij}]_{i,j=1}^{s}\) and \(B_n^{(m)}=[B^{(m)}_{n,ij}]_{i,j=1}^{s}\) with
\(\{B^{(m)}_{n,ij}\}_n \xrightarrow[\HH]{\mathrm{a.c.s.}} \{A_{n,ij}\}_n\) for every \(i,j\), then
\(\{B_n^{(m)}\}_n \xrightarrow[\HH]{\mathrm{a.c.s.}} \{A_n\}_n\).

\item[\textup{ACS\(_\HH\)5.}]
(Practical norm test.)
Let \(p\in[1,\infty]\) and assume that for each \(m\) there exists \(n_m\) such that, for all \(n\ge n_m\),
\[
   \|A_n-B_{n,m}\|_{p}\ \le\ \varepsilon(m,n)\, r_n^{1/p},
\]
with the convention \(r_n^{1/\infty}=1\), where \(\|\cdot\|_{p}\) denotes the Schatten \(p\)-norm (Definition~\ref{def:schatten}),
\(\lim_{m\to\infty}\limsup_{n\to\infty}\varepsilon(m,n)=0\),
and \(r_n=d_n\wedge e_n\).
Then \(\{B_{n,m}\}_n \xrightarrow[\HH]{\mathrm{a.c.s.}} \{A_n\}_n\).
\end{enumerate}


\section{Single-axis quaternion Toeplitz sequences}
\label{sec:QFT-Toeplitz}
This section contains the basic asymptotic theory for multilevel block Toeplitz sequences generated by multivariate matrix-valued quaternion functions in $L^1$. Our starting point is, for most results, \cite{lin_hermitian_2025} for the case of unilevel scalar Toeplitz matrices generated by bounded functions. Unlike the complex case, the Fourier quaternion kernel is not unique and can depend on two orthogonal imaginary axes (see \cite{ell_quaternion_2013} for a discussion of the topic). Here, we focus our attention on a single-axis and consider single-axis Toeplitz sequences. Under the single-axis hypothesis, the question of how to apply the Fourier kernel remains due to the non-commutativity of quaternions; however, we will see that even if, numerically, a given generating function yields different quaternion Toeplitz matrices depending on the kernel, algebraically, every Toeplitz matrix sequence belongs to the same class, and this is independent of the construction of Fourier coefficients. \val{Furthermore, in the current} section we recover all the fundamental classical properties of Toeplitz sequences that one can find in standard GLT theory surveys, such as \cite[Chapter 6]{garoni_Toeplitz_2017},
\cite[Chapter 4]{garoni_multilevel_2018}, \cite{GLTblock2}.

\noindent
 We write $\TT^d=[-\pi,\pi)^d$ and use the Lebesgue measure $d\btheta$ on $\TT^d$\val{.}

\vspace{.5\baselineskip}

\subsection{$d$-level quaternion Toeplitz matrices}
\label{subsec:Toeplitz-1axis}
To construct quaternion Toeplitz matrices we use the kernel $L_{S_L}$ and $R_{S_R}$, applied respectively on the left and right, given in Definition \ref{def:LR-sandwich-trig-functional}, and the Fourier coefficients \(F^{(S_L,S_R)}(\bm m)\) are as in Definition \ref{def:Fourier-sandwich-functional}. \\
Thus, for $\bm n=(n_{1},\dots ,n_{d})\in\mathbb{N}^{d}$ and a fixed multi-index set
\begin{equation}\label{eq:Lambda}
  \Lambda_{\bm n}:=\{0,\dots ,n_{1}-1\}\times\cdots\times\{0,\dots ,n_{d}-1\},
\end{equation}
endowed with the standard lexicographical order, we define the $d$-level (block) Toeplitz matrix as
\begin{equation}\label{eq:Toeplitz}
  T_{ \bm n}^{(S_L,S_R)}\!\bigl(F\bigr)
  :=\bigl[\widehat F^{(S_L,S_R)}(\alpha-\beta)\bigr]_{\alpha,\beta\in\Lambda_{\mathbf n}}
  \in\HH^{(N_{\bm n}s)\times(N_{\bm n}t)},
  \qquad N_{\bm n}=\prod_{\ell=1}^{d}n_{\ell}.
\end{equation}
\val{For the purely left and purely right kernels, respectively, we use the abbreviations
\[
T_{\bn}^{(L)}(F):=T_{\bn}^{([d],\varnothing)}(F),
\qquad
T_{\bn}^{(R)}(F):=T_{\bn}^{(\varnothing,[d])}(F).
\]}
The structure is analogous to that of its complex counterpart; however, the algebra this structure generates is different. First, we observe that all Toeplitz matrices belong to the same class. This is because the underlying kernel for Toeplitz matrices is interchangeable.

\begin{proposition}[Right-kernel reduction for Toeplitz matrices]
\label{prop:Toeplitz-right-reduction}
Let $F=Z+W\J\in L^{1}(\TT^{d};\HH^{s\times t})$ with $Z,W:\TT^{d}\to\CC_{\I}^{s\times t}$ and fix
an ordered partition $S_{L}\dot\cup S_{R}=[d]$. Consider
\[
(r_{S_{L}}(\btheta))_{j}=
\begin{cases}
-\theta_{j},& j\in S_{L},\\
\ \theta_{j},& j\in S_{R},
\end{cases}
\]
and define
\begin{equation}\label{eq:symb_transp}
    H(\btheta):=Z(\btheta)+\bigl(W\circ r_{S_{L}}\bigr)(\btheta)\,\J.
\end{equation}

Then, for every $\bn\in\mathbb{N}^{d}$,
\begin{equation}\label{eq:Toeplitz-reduction}
T_{\bn}^{(S_{L},S_{R})}(F)=T_{\bn}^{(R)}(H).
\end{equation}
\val{In the left-kernel case $(S_{L}=[d],S_{R}=\varnothing)$, we obtain}
\begin{equation}\label{eq:left-right-reduction}
T_{\bn}^{(L)}(F)=T_{\bn}^{(R)}\!\Bigl(Z+\bigl(W\circ(-\mathrm{id})\bigr)\J\Bigr).
\end{equation}
Moreover, for a fixed $\bn\in\mathbb N^d$ and the Cartesian set
\[
\Delta(\bn):=\{\,\bm k\in\ZZ^d:\ -n_j+1\le k_j\le n_j-1,\ j\in[d]\,\},
\]
we have
\begin{equation}\label{eq:evenness-criterion-box}
T_{\bn}^{(L)}(F)=T_{\bn}^{(R)}(F)
\ \Longleftrightarrow\
\widehat W(\bm k)=\widehat W(-\bm k)\quad\forall\,\bm k\in\Delta(\bn).
\end{equation}
\val{Moreover,}
\begin{equation}\label{eq:evenness-criterion-global}
\val{T_{\bn}^{(L)}(F)=T_{\bn}^{(R)}(F)\quad\text{for every }\bn\in\NN^d
\ \Longleftrightarrow\
\widehat W(\bm k)=\widehat W(-\bm k)\quad\forall\,\bm k\in\ZZ^d,}
\end{equation}
and the latter is equivalent to $W(\btheta)=W(-\btheta)$ a.e. on $\TT^d$.
\end{proposition}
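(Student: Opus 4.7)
The plan is to reduce every assertion to the single algebraic identity already delivered by Proposition~\ref{prop:reduction-right}, namely $\widehat{F}^{(S_L,S_R)}(\bm m)=\widehat{Z}(\bm m)+\widehat{W}(\tilde{\bm m})\,\J$ with $\tilde m_j=m_j$ on $S_L$ and $\tilde m_j=-m_j$ on $S_R$. Every Toeplitz matrix in the statement has its $(\alpha,\beta)$-block depending only on the multi-index difference $\alpha-\beta$, so proving equality of two such matrices reduces to proving equality of Fourier coefficients on the finite index set $\Delta(\bn)$, and proving equality of the whole sequences reduces to equality of all Fourier coefficients on $\ZZ^d$.

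First I would establish \eqref{eq:Toeplitz-reduction}. Writing $F=Z+W\J$ and $H=Z+(W\circ r_{S_L})\,\J$, the purely-right Fourier coefficient of $H$ is, by the right-kernel instance of Proposition~\ref{prop:reduction-right}, equal to $\widehat{Z}(\bm m)+\widehat{(W\circ r_{S_L})}(-\bm m)\,\J$. I would then perform the change of variables $\bphi=r_{S_L}(\btheta)$ in the complex Fourier integral defining $\widehat{W\circ r_{S_L}}(-\bm m)$; since $r_{S_L}$ is measure-preserving on $\TT^d$ and transforms $-\langle\bm m,\btheta\rangle$ into $\langle\tilde{\bm m},\bphi\rangle$, one obtains $\widehat{(W\circ r_{S_L})}(-\bm m)=\widehat{W}(\tilde{\bm m})$. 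Combining gives $\widehat{H}^{(R)}(\bm m)=\widehat{F}^{(S_L,S_R)}(\bm m)$ for all $\bm m\in\ZZ^d$, hence the block-Toeplitz matrices coincide for every $\bn$. The special case \eqref{eq:left-right-reduction} is the instance $S_L=[d]$, $S_R=\varnothing$, where $r_{S_L}=-\mathrm{id}$.

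Next I would prove the evenness criterion \eqref{eq:evenness-criterion-box}. Specializing Proposition~\ref{prop:reduction-right} to the pure-left ($\tilde{\bm m}=\bm m$) and pure-right ($\tilde{\bm m}=-\bm m$) partitions, one reads off
\[
\widehat{F}^{(L)}(\bm k)=\widehat{Z}(\bm k)+\widehat{W}(\bm k)\J,\qquad
\widehat{F}^{(R)}(\bm k)=\widehat{Z}(\bm k)+\widehat{W}(-\bm k)\J.
\]
Since the blocks appearing in $T_{\bn}^{(L)}(F)$ and $T_{\bn}^{(R)}(F)$ are indexed exactly by the differences $\bm k=\alpha-\beta$ with $\alpha,\beta\in\Lambda_{\bn}$, and this set of differences is precisely $\Delta(\bn)$, block-by-block equality is equivalent to $\widehat{Z}(\bm k)+\widehat{W}(\bm k)\J=\widehat{Z}(\bm k)+\widehat{W}(-\bm k)\J$ on $\Delta(\bn)$. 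The $Z$-part cancels and uniqueness of the orthogonal decomposition $\HH^{s\times t}=\CC_{\I}^{s\times t}\oplus\CC_{\I}^{s\times t}\J$ from \eqref{eq:orth_dec} (applied entrywise) lets me cancel the right factor $\J$ and conclude $\widehat{W}(\bm k)=\widehat{W}(-\bm k)$ on $\Delta(\bn)$.

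Finally, the global equivalence \eqref{eq:evenness-criterion-global} follows by letting $\bn$ range over $\mathbb{N}^d$: since $\bigcup_{\bn}\Delta(\bn)=\ZZ^d$, the previous criterion at sequence level becomes $\widehat{W}(\bm k)=\widehat{W}(-\bm k)$ for every $\bm k\in\ZZ^d$. Applying this to the $L^1$ function $W-W(-\,\cdot)$, whose complex Fourier coefficients on $\TT^d$ (computed entrywise on the real and $\I$-components) all vanish, Fourier uniqueness in $L^1(\TT^d;\CC_{\I}^{s\times t})$ yields $W(\btheta)=W(-\btheta)$ a.e. The main (and really only) delicate step is the change-of-variables computation for $\widehat{W\circ r_{S_L}}$: one has to handle the reflections on the subset $S_L$ together with the $-\bm m$ argument so that all signs collapse into exactly the sign-flip pattern $\tilde{\bm m}$ of Proposition~\ref{prop:reduction-right}; everything else is bookkeeping with the identity $\J\alpha=\overline{\alpha}\J$ and the finite/global duality between $\Delta(\bn)$ and $\ZZ^d$.
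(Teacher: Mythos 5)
Your proof is correct and follows essentially the same route as the paper's: reduce everything to the coefficient identity of Proposition~\ref{prop:reduction-right}, observe that $\widehat{W\circ r_{S_L}}(-\bm m)=\widehat W(\tilde{\bm m})$ by the measure-preserving change of variables $\btheta\mapsto r_{S_L}(\btheta)$ (using $\langle-\bm m,r_{S_L}(\btheta)\rangle=\langle\tilde{\bm m},\btheta\rangle$), match coefficients on $\Delta(\bn)$ for the fixed-size criterion, and pass to $\ZZ^d$ and $L^1$ Fourier uniqueness for the global statement. The only (harmless) difference is expository: you cite uniqueness of the decomposition $\HH=\CC_\I\oplus\CC_\I\J$ entrywise to cancel $\J$, while the paper simply cancels the $Z$-part and reads off the equality of the $\J$-components; these are the same observation.
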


\MovedProofMarker{p06}{0}{proof:p06}{app:toeplitz-proofs}

\subsection{Linearity, Adjoint, and Hermitian properties}
\label{subsec:AdjointFormal}

Here we report a few results concerning linearity, adjoint and Hermitian character in the quaternionic setting.

\begin{lemma}[$\CC_{\I}$-linearity of sandwich Fourier coefficients]
\label{lem:Fourier-linearity}
Fix an ordered partition $S_{L}\dot\cup S_{R}=[d]$.
For $F,G\in L^{1}(\TT^{d};\HH^{s\times t})$ and $a,b\in\CC_{\I}$, define
\begin{equation}\label{eq:Fourier-sandwich}
\widehat{F}^{(S_{L},S_{R})}(\bm k)
:=(2\pi)^{-d}\!\int_{\TT^{d}}
e^{-\I\langle\bm k,\boldsymbol\theta\rangle_{S_{L}}}\,F(\boldsymbol\theta)\,
e^{-\I\langle\bm k,\boldsymbol\theta\rangle_{S_{R}}}\,d\boldsymbol\theta,
\qquad \bm k\in\mathbb Z^{d}.
\end{equation}
Then, for all $\bm k\in\mathbb Z^{d}$,
\begin{equation}\label{eq:Fourier-CI-linear}
\widehat{\,aF+bG\,}^{(S_{L},S_{R})}(\bm k)
= a\,\widehat{F}^{(S_{L},S_{R})}(\bm k)
+ b\,\widehat{G}^{(S_{L},S_{R})}(\bm k).
\end{equation}
\end{lemma}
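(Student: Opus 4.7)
The plan is to unfold the definition, distribute, commute the left scalars past the left exponential kernel (which is legal because both live in the commutative slice $\CC_{\I}$), and then invoke real-linearity of the integral. Concretely, I would start by writing
\[
\widehat{aF+bG}^{(S_{L},S_{R})}(\bm k)
=(2\pi)^{-d}\!\int_{\TT^{d}}
e^{-\I\langle\bm k,\btheta\rangle_{S_{L}}}\bigl(aF(\btheta)+bG(\btheta)\bigr)
e^{-\I\langle\bm k,\btheta\rangle_{S_{R}}}\,d\btheta,
\]
and then split the integrand into two summands via the distributivity of quaternion multiplication over addition (applied on both sides of the scalar factor).

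The key step is the commutation argument: since $a,b\in\CC_{\I}$ and the left kernel satisfies $e^{-\I\langle\bm k,\btheta\rangle_{S_{L}}}\in\CC_{\I}$, and $\CC_{\I}\cong\CC$ is a commutative subfield of $\HH$, we have
\[
e^{-\I\langle\bm k,\btheta\rangle_{S_{L}}}\,a=a\,e^{-\I\langle\bm k,\btheta\rangle_{S_{L}}},
\qquad
e^{-\I\langle\bm k,\btheta\rangle_{S_{L}}}\,b=b\,e^{-\I\langle\bm k,\btheta\rangle_{S_{L}}},
\]
so we can move $a$ and $b$ to the leftmost position of each summand. The right kernel $e^{-\I\langle\bm k,\btheta\rangle_{S_{R}}}$ is unaffected: it simply stays on the right of $F(\btheta)$ or $G(\btheta)$ and is never asked to pass the constants.

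To finish, I would use real-linearity of the $\HH^{s\times t}$-valued Lebesgue (Bochner) integral: left multiplication by a fixed $a\in\HH$ is a bounded real-linear endomorphism of $\HH^{s\times t}$, hence commutes with the integral. Pulling $a$ and $b$ outside the two integrals yields exactly $a\,\widehat F^{(S_{L},S_{R})}(\bm k)+b\,\widehat G^{(S_{L},S_{R})}(\bm k)$, which is \eqref{eq:Fourier-CI-linear}.

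There is no serious obstacle here; the only subtlety worth flagging in the writeup is that the lemma is genuinely a $\CC_{\I}$-linearity statement rather than a full $\HH$-linearity one. If $a,b$ were replaced by arbitrary quaternion scalars (for instance containing a $\J$-component), the commutation step would fail via the identity \eqref{eq:comm}, and one would instead pick up a sign flip in the multi-index carried by the left exponential, matching the sign-flip pattern $\tilde{\bm m}$ already seen in Lemma~\ref{lem:Phi-sandwich-functional} and Proposition~\ref{prop:reduction-right}. Emphasizing this distinction clarifies why $a,b\in\CC_{\I}$ is the natural and sharp hypothesis.
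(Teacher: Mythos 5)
Your proposal is correct and takes essentially the same route as the paper: unfold the definition, distribute over the sum, commute $a,b$ past the $\CC_{\I}$-valued left kernel, and invoke $\mathbb{R}$-linearity of the integral to pull the scalars outside. The closing remark about why the hypothesis $a,b\in\CC_{\I}$ is sharp (a $\J$-component would trigger the sign-flip pattern of Proposition~\ref{prop:reduction-right}) is a useful clarification not made explicit in the paper's proof, but the core argument is the same.
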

\MovedProofMarker{p07}{0}{proof:p07}{app:toeplitz-proofs}

\begin{proposition}[\val{$\CC_{\I}$-linearity of quaternion Toeplitz matrices}]
\label{prop:Toeplitz-linear}
Let $T_{\bn}^{(S_{L},S_{R})}(F)\in\HH^{(N_{\bn}s)\times(N_{\bn}t)}$
be defined by
\[
\bigl[T_{\bn}^{(S_{L},S_{R})}(F)\bigr]_{\alpha,\beta}
=\widehat{F}^{(S_{L},S_{R})}(\bm k),
\qquad
\bm k:=\alpha-\beta,\ \ \alpha,\beta\in\Lambda_{\bn},
\]
with $\Lambda_{\bn}$ as in \eqref{eq:Lambda}.
Then, for all $a,b\in\CC_{\I}$ and $F,G\in L^{1}(\TT^{d};\HH^{s\times t})$,
\begin{equation}\label{eq:Toeplitz-CI-linear}
T_{\bn}^{(S_{L},S_{R})}(aF+bG)
= a\,T_{\bn}^{(S_{L},S_{R})}(F)
+ b\,T_{\bn}^{(S_{L},S_{R})}(G).
\end{equation}
\end{proposition}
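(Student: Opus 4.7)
The proposition is an immediate consequence of Lemma~\ref{lem:Fourier-linearity}, since the Toeplitz matrix is defined entrywise from the sandwich Fourier coefficients. My plan is to argue entry by entry, and the only care needed is due to quaternionic noncommutativity: one must check that the scalars $a,b\in\CC_{\I}$, placed to the left, can be pulled out of the matrix assembly.

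First, I would fix $\alpha,\beta\in\Lambda_{\bn}$ and set $\bm k:=\alpha-\beta$. By the defining formula of $T_{\bn}^{(S_{L},S_{R})}$ and the $\CC_{\I}$-linearity of the Fourier coefficients established in Lemma~\ref{lem:Fourier-linearity}, I obtain
\[
\bigl[T_{\bn}^{(S_{L},S_{R})}(aF+bG)\bigr]_{\alpha,\beta}
=\widehat{\,aF+bG\,}^{(S_{L},S_{R})}(\bm k)
=a\,\widehat{F}^{(S_{L},S_{R})}(\bm k)+b\,\widehat{G}^{(S_{L},S_{R})}(\bm k).
\]
Next, I would recall the basic matrix-algebra identity that, for any quaternion matrix $M\in\HH^{(N_{\bn}s)\times(N_{\bn}t)}$ and any $a\in\HH$, left multiplication by $a$ as a scalar acts entrywise: $[aM]_{\alpha,\beta}=a\,M_{\alpha,\beta}$. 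Applying this to $M=T_{\bn}^{(S_{L},S_{R})}(F)$ and $M=T_{\bn}^{(S_{L},S_{R})}(G)$, the right-hand side above coincides with
\[
\bigl[a\,T_{\bn}^{(S_{L},S_{R})}(F)+b\,T_{\bn}^{(S_{L},S_{R})}(G)\bigr]_{\alpha,\beta}.
\]
Since the two matrices agree in every entry, \eqref{eq:Toeplitz-CI-linear} follows.

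There is no genuine obstacle: the only point where noncommutativity might enter is the commutation of the scalars $a,b$ with the $\CC_{\I}$-valued exponential kernels inside the integrand, and this is precisely what Lemma~\ref{lem:Fourier-linearity} has already taken care of. It is worth emphasizing, however, that the result is stated for left scalars in $\CC_{\I}$, not for arbitrary quaternion scalars: a generic $q\in\HH$ would not commute with the kernels $e^{-\I\langle\bm k,\btheta\rangle_{S_{L}}}$ and $e^{-\I\langle\bm k,\btheta\rangle_{S_{R}}}$ appearing in \eqref{eq:Fourier-sandwich}, and the identity would then fail.
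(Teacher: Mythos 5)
Your argument is exactly the paper's proof: apply Lemma~\ref{lem:Fourier-linearity} entrywise, using that each block entry of $T_{\bn}^{(S_{L},S_{R})}(\cdot)$ is a sandwich Fourier coefficient, and that left multiplication by a scalar acts entrywise. The extra remarks (that left scalar multiplication is entrywise, and that the restriction to $a,b\in\CC_{\I}$ is essential) are correct and helpful but do not change the route.
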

\MovedProofMarker{p08}{0}{proof:p08}{app:toeplitz-proofs}

Now, we characterize the adjoints of quaternion Toeplitz matrices and we provide criteria for checking the Hermitian character. Consider
\[
  F:\TT^{d}\to\HH^{s\times t},\qquad
  F(\btheta)=Z(\btheta)+W(\btheta)\J,
\]
with $Z,W\in L^{1}(\TT^{d};\CC_{\I}^{\,s\times t})$.
\val{For $\bk=(k_1,\ldots,k_d)\in\ZZ^d$, define its coordinate projections onto
$S_L$ and $S_R$ by
\[
(\bk_L)_j:=
\begin{cases}
k_j, & j\in S_L,\\
0, & j\notin S_L,
\end{cases}
\qquad
(\bk_R)_j:=
\begin{cases}
k_j, & j\in S_R,\\
0, & j\notin S_R.
\end{cases}
\]
Thus $\bk=\bk_L+\bk_R$.}
\val{With this notation, for any ordered partition $S_L\dot\cup S_R=[d]$ we have}
\begin{equation}\label{eq:mv-Sandwich-coeffs}
  \widehat F^{(S_L,S_R)}(\bm k)
  =\widehat Z(\,\bm k_L+\bm k_R\,)+\widehat W(\,\bm k_L-\bm k_R\,)\,\J,
\end{equation}
where the hats denote classical complex Fourier coefficients.

\begin{theorem}[Adjoint identities]\label{thm:AdjointFormal-mv}
Let $F:\TT^{d}\to\HH^{s\times t}$ with $F(\btheta)=Z(\btheta)+W(\btheta)\J$, $Z,W\in L^{1}(\TT^{d};\CC_{\I}^{\,s\times t})$, and let $\bn\in\NN^{d}$. Then
\begin{equation}\label{eq:adjointnessCrit}
 \bigl(T_{\bn}^{(L)}(F)\bigr)^{*}=T_{\bn}^{(R)}(F^{*}),\qquad
 \bigl(T_{\bn}^{(R)}(F)\bigr)^{*}=T_{\bn}^{(L)}(F^{*}),\qquad
 \bigl(T_{\bn}^{(S_L,S_R)}(F)\bigr)^{*}=T_{\bn}^{(S_R,S_L)}(F^{*}).
\end{equation}
\end{theorem}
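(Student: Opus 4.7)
The plan is to reduce all three identities to a single Fourier-coefficient identity and then transfer it to the Toeplitz level via the elementary block-adjoint formula $[T^{*}]_{\alpha,\beta}=([T]_{\beta,\alpha})^{*}$. Concretely, the core identity I will prove is
\[
\bigl(\widehat{F}^{(S_{L},S_{R})}(-\bm k)\bigr)^{*}
=\widehat{F^{*}}^{(S_{R},S_{L})}(\bm k),
\qquad \bm k\in\ZZ^{d},
\]
where the two exponential-kernel index sets are swapped. This isolates the only genuinely noncommutative phenomenon in the argument, namely the order reversal in $(AB)^{*}=B^{*}A^{*}$.

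First I would expand $\widehat{F}^{(S_{L},S_{R})}(\bm m)$ via Definition~\ref{def:Fourier-sandwich-functional} and apply the pointwise quaternion adjoint to the integrand. The key algebraic input is that for scalars $a,b\in\CC_{\I}$ and $M\in\HH^{s\times t}$ one has $(aMb)^{*}=\bar b\,M^{*}\,\bar a$, combined with $\overline{e^{-\I t}}=e^{\I t}$. This forces the two $\CC_{\I}$-valued exponentials to switch sides and their frequency indices to flip sign, while $F$ is replaced by $F^{*}$ in the middle. Evaluating at $\bm m=-\bm k$ and commuting the adjoint with the integral (valid because $(\cdot)^{*}$ is a real-linear continuous involution acting entrywise) yields precisely the claimed identity. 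The auxiliary fact that $F^{*}\in L^{1}(\TT^{d};\HH^{t\times s})$ whenever $F\in L^{1}(\TT^{d};\HH^{s\times t})$ follows from $\|F^{*}(\btheta)\|_{p}=\|F(\btheta)\|_{p}$, since adjunction does not alter singular values.

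Second, I would transfer the identity to the block-Toeplitz level. Using the defining formula $[T_{\bn}^{(\tau)}(H)]_{\alpha,\beta}=\widehat{H}^{(\tau)}(\alpha-\beta)$ together with $[T^{*}]_{\alpha,\beta}=([T]_{\beta,\alpha})^{*}$, and setting $\bm k:=\alpha-\beta$, the core identity gives
\[
\bigl[(T_{\bn}^{(S_{L},S_{R})}(F))^{*}\bigr]_{\alpha,\beta}
=\bigl(\widehat{F}^{(S_{L},S_{R})}(-\bm k)\bigr)^{*}
=\widehat{F^{*}}^{(S_{R},S_{L})}(\bm k)
=\bigl[T_{\bn}^{(S_{R},S_{L})}(F^{*})\bigr]_{\alpha,\beta},
\]
which is the third and most general statement in \eqref{eq:adjointnessCrit}. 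The two specialized identities $(T_{\bn}^{(L)}(F))^{*}=T_{\bn}^{(R)}(F^{*})$ and $(T_{\bn}^{(R)}(F))^{*}=T_{\bn}^{(L)}(F^{*})$ then follow by taking $(S_{L},S_{R})=([d],\varnothing)$ and $(S_{L},S_{R})=(\varnothing,[d])$, respectively. The only point that requires care, and therefore the main (mild) obstacle, is the bookkeeping of the noncommutative reversal in $(aMb)^{*}=\bar b\,M^{*}\,\bar a$: this is precisely what produces the swap $S_{L}\leftrightarrow S_{R}$ on the right-hand side, and any sign or ordering slip would occur here. No density or limiting argument is needed, since the entire computation is pointwise on $\TT^{d}$.
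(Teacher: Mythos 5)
Your proposal is correct and takes essentially the same approach as the paper: the paper likewise derives the Fourier-coefficient identity $(\widehat F^{(\tau)}(-\bm k))^{*}=\widehat{(F^{*})}^{(\tau')}(\bm k)$ from the noncommutative reversal under $(\cdot)^{*}$ and then transfers it to the Toeplitz level via $[T^{*}]_{\alpha\beta}=([T]_{\beta\alpha})^{*}$ with $\bm k=\alpha-\beta$. The only cosmetic difference is that you prove the sandwich case first and specialize, whereas the paper writes out the $L$ and $R$ cases before stating the sandwich one.
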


\MovedProofMarker{p09}{0}{proof:p09}{app:toeplitz-proofs}

We now generalize the Hermitian character proved in \cite{lin_hermitian_2025} for unilevel scalar right Toeplitz matrices. As in the basic case, the Hermitian character is completely determined by the Fourier coefficient behavior of the generating functions. In particular, the following result holds true.

\begin{theorem}[Hermitian character for single-axis quaternion Toeplitz matrices]\label{thm:selfadjoint-single-axis-mv}
Assume $s=t$ and let $F(\btheta)=Z(\btheta)+W(\btheta)\J$ with
$Z,W\in L^{1}(\TT^{d};\CC_{\I}^{\,s\times s})$.
Fix an ordered partition $S_L\dot\cup S_R=[d]$.
For $\bn\in\NN^{d}$ set
\[
T_{\bn}^{(S_L,S_R)}(F)
=\bigl[\widehat F^{(S_L,S_R)}(\alpha-\beta)\bigr]_{\alpha,\beta\in\Lambda_{\bn}}.
\]
Then the following are equivalent:
\begin{enumerate}[label=(\roman*),leftmargin=*]
\item $T_{\bn}^{(S_L,S_R)}(F)$ is Hermitian for all $\bn$;
\item $Z(\btheta)$ is essentially Hermitian on $\TT^{d}$ and
      $W(-\btheta)=-\,W(\btheta)^{\mathsf T}$ a.e.\ on $\TT^{d}$.
\end{enumerate}
In particular, the claim does not depend on the choice of the ordered partition.
\end{theorem}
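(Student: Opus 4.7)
The plan is to convert the Hermitian condition into an identity between two sandwich Toeplitz matrices with swapped partitions, compute the relevant sandwich Fourier coefficients in the $\CC_{\I}\oplus\CC_{\I}\J$ decomposition, and then separate the two slice components.

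First I would invoke Theorem~\ref{thm:AdjointFormal-mv}, which gives $\bigl(T_{\bn}^{(S_L,S_R)}(F)\bigr)^{*}=T_{\bn}^{(S_R,S_L)}(F^{*})$. Hence (i) is equivalent to
\[
 T_{\bn}^{(S_L,S_R)}(F)=T_{\bn}^{(S_R,S_L)}(F^{*}) \quad \text{for all } \bn\in\NN^{d}.
\]
Since the $(\alpha,\beta)$-block of each side is a sandwich Fourier coefficient evaluated at $\alpha-\beta$, and since for any fixed $\bm k\in\ZZ^{d}$ one can choose $\bn$ large enough so that $\bm k=\alpha-\beta$ for some $\alpha,\beta\in\Lambda_{\bn}$, this matrix identity for all $\bn$ is equivalent to the coefficient identity
\begin{equation}\label{eq:herm-coeff-id}
 \widehat F^{(S_L,S_R)}(\bm k)=\widehat{F^{*}}^{(S_R,S_L)}(\bm k)\qquad\text{for all }\bm k\in\ZZ^{d}.
\end{equation}

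Next I would unpack $F^{*}$ in slice form. Using $\J\alpha=\bar\alpha\,\J$ (equation \eqref{eq:comm}), a direct computation gives
\[
 F^{*}=Z^{*}+(-W^{T})\,\J,
\]
so $F^{*}$ has $\CC_{\I}$-part $Z^{*}$ and $\J$-part $-W^{T}$. Apply the slice formula \eqref{eq:coeff-reduction} from Proposition~\ref{prop:reduction-right} to both sides of \eqref{eq:herm-coeff-id}. With the sign flip $\tilde k_{j}=k_{j}$ on $S_{L}$ and $\tilde k_{j}=-k_{j}$ on $S_{R}$ for the partition $(S_L,S_R)$, and with the swapped flip (that equals $-\tilde{\bm k}$) for the partition $(S_R,S_L)$, \eqref{eq:herm-coeff-id} becomes
\[
 \widehat Z(\bm k)+\widehat W(\tilde{\bm k})\,\J
 =\widehat{Z^{*}}(\bm k)+\widehat{(-W^{T})}(-\tilde{\bm k})\,\J.
\]

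Using the direct sum $\HH^{s\times s}=\CC_{\I}^{s\times s}\oplus\CC_{\I}^{s\times s}\J$ and the elementary fact $\widehat{W^{T}}(\bm m)=\widehat W(\bm m)^{T}$, separating the $1$- and $\J$-components yields
\[
 \widehat Z(\bm k)=\widehat{Z^{*}}(\bm k)\ \text{for all }\bm k\in\ZZ^{d},\qquad
 \widehat W(\bm m)=-\,\widehat W(-\bm m)^{T}\ \text{for all }\bm m\in\ZZ^{d},
\]
where, for the second identity, I used that $\bm k\mapsto\tilde{\bm k}$ is a bijection of $\ZZ^{d}$. By uniqueness of complex Fourier expansions in the slice $\CC_{\I}$, these are equivalent to $Z=Z^{*}$ a.e.\ on $\TT^{d}$ and $W(-\btheta)=-W(\btheta)^{T}$ a.e.\ on $\TT^{d}$, which is (ii). Since the two conditions depend only on $Z$ and $W$, not on the partition $(S_L,S_R)$, the independence assertion follows immediately.

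The only step requiring care is the bookkeeping of signs and transposes, in particular tracking that swapping $(S_L,S_R)\to(S_R,S_L)$ negates the tilde operator while the minus sign coming from $F^{*}=Z^{*}-W^{T}\J$ and the transpose under $\widehat{W^{T}}$ combine to produce the symmetry condition $W(-\btheta)=-W(\btheta)^{T}$; no analytic difficulty arises because Proposition~\ref{prop:reduction-right} has already done the heavy lifting of converting the sandwich kernel into ordinary complex Fourier data.
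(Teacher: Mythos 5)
Your proof is correct. The route is recognizably different from the paper's: you invoke Theorem~\ref{thm:AdjointFormal-mv} to replace the Hermitian condition by the matrix identity $T_{\bn}^{(S_L,S_R)}(F)=T_{\bn}^{(S_R,S_L)}(F^{*})$, compute $F^{*}=Z^{*}-W^{\mathsf T}\J$ once at the function level, and then apply the slice reduction \eqref{eq:coeff-reduction} to both sides before separating $\CC_{\I}$- and $\J$-components; the paper instead works directly with the coefficient-level Hermitian criterion $\widehat F^{(S_L,S_R)}(\bm k)=\bigl(\widehat F^{(S_L,S_R)}(-\bm k)\bigr)^{*}$, expands via \eqref{eq:mv-Sandwich-coeffs}, and performs the $\J^{*}=-\J$ and commutation bookkeeping at the Fourier-coefficient level with the change of variables $\bm p=\bm k_L+\bm k_R$, $\bm q=\bm k_L-\bm k_R$. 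Both arguments hinge on the same two algebraic facts ($\J^{*}=-\J$ and $\J\alpha=\overline{\alpha}\J$) and on Fourier uniqueness; yours buys a shorter, more modular derivation because the messy conjugation is delegated to the already-proved adjoint theorem, at the modest cost of having to track the negation of the tilde map under the swap $(S_L,S_R)\mapsto(S_R,S_L)$, which you handle correctly. The sign and transpose bookkeeping (in particular $(W\J)^{*}=-W^{\mathsf T}\J$ and $\widehat{W^{\mathsf T}}(\bm m)=\widehat W(\bm m)^{\mathsf T}$) checks out, and the bijectivity of $\bm k\mapsto\tilde{\bm k}$ on $\ZZ^{d}$ is all that is needed to decouple the two conditions, exactly as in the paper.
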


\MovedProofMarker{p10}{2}{proof:p10}{app:toeplitz-proofs}

\subsection{Symplectic embedding of Toeplitz matrices and spectral/singular value distributions of Toeplitz matrix sequences}
\label{subsec:symp-embed-Toeplitz}

The aim of the present section is to derive spectral/singular value distributions for block multilevel Toeplitz matrix sequences in the quaternion setting. We start by defining the proper embeddings.

\subsubsection{Left/right and sandwich embeddings}

Throughout this section, let $F=Z+W\J\in L^{1}(\TT^{d};\HH^{s\times t})$ with $Z,W:\TT^{d}\to\CC_{\I}^{\,s\times t}$ slice-valued.
Let $\PhiSymp:\HH^{s\times t}\to\CC^{(2s)\times(2t)}$ be the block symplectic embedding on $s\times t$ matrices
\[
\PhiSymp(Z+W\J)=
\begin{bmatrix}
Z & W \\[2pt]
-\overline{W} & \overline{Z}
\end{bmatrix},
\qquad Z,W\in\CC_{\I}^{\,s\times t}.
\]
If $A\in\HH^{N_{\bn}s\times N_{\bn}t}$ is an $s\times t$-block matrix, we apply $\PhiSymp$ $s\times t$-blockwise, meaning
\[
\PhiSymp(A):=\bigl[\PhiSymp(A_{\alpha\beta})\bigr]_{\alpha,\beta\in {\Lambda_{\bn}}} \in\CC^{(2sN_{\bn})\times(2tN_{\bn})},
\]
{with \(A_{\alpha,\beta} \in \HH^{s \times t}\)}. We will use the following identity
\begin{equation}\label{eq:Phi-cov-used}
\PhiSymp(\lambda A\rho)
=\operatorname{diag}(\lambda,\overline{\lambda})\,\PhiSymp(A)\,\operatorname{diag}(\rho,\overline{\rho})
\quad\bigl(\lambda\in\CC_{\I}^{\,s\times s},\ \rho\in\CC_{\I}^{\,t\times t}\bigr),
\end{equation}
where $\operatorname{diag}(\lambda,\overline{\lambda})$ and $\operatorname{diag}(\rho,\overline{\rho})$ are $2\times2$ block-diagonal matrices with blocks of sizes $s\times s$ and $t\times t$, respectively. When $\lambda,\rho\in\CC_{\I}$ are scalars we read them as $\lambda I_{s}$ and $\rho I_{t}$.

For a complex matrix symbol $G:\TT^{d}\to\CC^{(2s)\times(2t)}$, we write
\[
\widehat G(\bm k)=(2\pi)^{-d}\!\int_{\TT^{d}} G(\boldsymbol\theta)\,e^{-\I\langle \bm k,\boldsymbol\theta\rangle}\,d\boldsymbol\theta,
\qquad
T_{\bn}(G):=\bigl[\widehat G(\alpha-\beta)\bigr]_{\alpha,\beta\in\Lambda_{\bn}} .
\]

\begin{theorem}[Left/right embedding]\label{thm:unified-LR-rect}
With the decomposition $F(\btheta)=Z(\btheta)+W(\btheta)\J$ from Lemma~\ref{lem:Phi-sandwich-functional}, define the complex symbols
\[
G_{\mathrm L}(\btheta)=
\begin{pmatrix}
Z(\btheta) & W(\btheta)\\[2pt]
-\overline{W(-\btheta)} & \overline{Z(-\btheta)}
\end{pmatrix},
\qquad
G_{\mathrm R}(\btheta)=
\begin{pmatrix}
Z(\btheta) & W(-\btheta)\\[2pt]
-\overline{W(\btheta)} & \overline{Z(-\btheta)}
\end{pmatrix}.
\]
Then, for every $\bn\in\NN^{d}$,
\[
\PhiSymp\!\bigl(T_{\bn}^{(\mathrm L)}(F)\bigr)=T_{\bn}(G_{\mathrm L}),
\qquad
\PhiSymp\!\bigl(T_{\bn}^{(\mathrm R)}(F)\bigr)=T_{\bn}(G_{\mathrm R}).
\]
\end{theorem}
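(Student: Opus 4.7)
The claim is essentially a blockwise transport identity: after applying $\PhiSymp$ to each $s\times t$ block of the Toeplitz matrix, what remains is to verify that the resulting $2s\times 2t$ complex blocks coincide with the Fourier coefficients of the alleged complex symbols $G_{\mathrm L}$ and $G_{\mathrm R}$. The plan is to carry out exactly this verification, using Lemma~\ref{lem:Phi-sandwich-functional} on the quaternion side and a direct computation of $\widehat{G_{\mathrm L}}(\bm m)$, $\widehat{G_{\mathrm R}}(\bm m)$ on the complex side.

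\textbf{Step 1 (blockwise reduction).} Both sides of each asserted identity are $N_{\bn}\times N_{\bn}$ block matrices whose $(\alpha,\beta)$ block is $2s\times 2t$. Since $\PhiSymp$ is applied $s\times t$-blockwise, the $(\alpha,\beta)$ block of $\PhiSymp\bigl(T_{\bn}^{(\tau)}(F)\bigr)$ is precisely $\PhiSymp\bigl(\widehat{F}^{(\tau)}(\alpha-\beta)\bigr)$ for $\tau\in\{\mathrm L,\mathrm R\}$, while the $(\alpha,\beta)$ block of $T_{\bn}(G_{\tau})$ is $\widehat{G_{\tau}}(\alpha-\beta)$. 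Hence it suffices to show, for every $\bm m\in\ZZ^{d}$,
\begin{equation}\label{eq:plan-key}
\PhiSymp\!\bigl(\widehat{F}^{(\tau)}(\bm m)\bigr)=\widehat{G_{\tau}}(\bm m),\qquad \tau\in\{\mathrm L,\mathrm R\}.
\end{equation}

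\textbf{Step 2 (left-hand side via Lemma~\ref{lem:Phi-sandwich-functional}).} For the purely left kernel we have $S_{L}=[d]$, $S_{R}=\varnothing$, so the sign flip $\tilde{\bm m}$ equals $\bm m$; for the purely right kernel we have $\tilde{\bm m}=-\bm m$. Applying Lemma~\ref{lem:Phi-sandwich-functional} gives
\[
\PhiSymp\!\bigl(\widehat{F}^{(\mathrm L)}(\bm m)\bigr)=
\begin{bmatrix}\widehat{Z}(\bm m) & \widehat{W}(\bm m)\\[2pt] -\widehat{\overline{W}}(-\bm m) & \widehat{\overline{Z}}(-\bm m)\end{bmatrix},\qquad
\PhiSymp\!\bigl(\widehat{F}^{(\mathrm R)}(\bm m)\bigr)=
\begin{bmatrix}\widehat{Z}(\bm m) & \widehat{W}(-\bm m)\\[2pt] -\widehat{\overline{W}}(\bm m) & \widehat{\overline{Z}}(-\bm m)\end{bmatrix}.
\]

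\textbf{Step 3 (right-hand side by direct Fourier computation).} I would compute $\widehat{G_{\tau}}(\bm m)$ entrywise by linearity of the (classical complex matrix-valued) Fourier transform. The only nontrivial remark is that the blocks $W(-\btheta)$, $\overline{W(-\btheta)}$, $\overline{Z(-\btheta)}$ in $G_{\tau}$ involve the reflected argument $-\btheta$; the change of variable $\btheta\mapsto-\btheta$ (under which $\TT^{d}$ and $d\btheta$ are invariant) yields
\[
\widehat{(W\circ(-\mathrm{id}))}(\bm m)=\widehat{W}(-\bm m),\qquad
\widehat{(\overline{W}\circ(-\mathrm{id}))}(\bm m)=\widehat{\overline{W}}(-\bm m),
\]
and analogously for $\overline{Z}\circ(-\mathrm{id})$. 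Assembling the four $s\times t$ blocks of $\widehat{G_{\mathrm L}}(\bm m)$ and $\widehat{G_{\mathrm R}}(\bm m)$ reproduces exactly the two matrices in Step 2, which establishes \eqref{eq:plan-key} and hence the theorem.

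\textbf{Main obstacle.} There is no real obstacle; the entire content of the statement has already been packaged into Lemma~\ref{lem:Phi-sandwich-functional}, and the only place where one must be careful is the sign bookkeeping: the reflection $\btheta\mapsto-\btheta$ in the definition of $G_{\tau}$ must be reconciled with both the sign flip $\tilde{\bm m}$ coming from the sandwich/left/right partition and the additional sign appearing in the $(2,1)$- and $(2,2)$-blocks of $\PhiSymp$. Once the correspondence between $\tilde{\bm m}$ and the reflected argument is made explicit (the left kernel keeps the sign, the right kernel flips it), the identification is immediate and the multilevel structure plays no independent role beyond the blockwise observation in Step 1.
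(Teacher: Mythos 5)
Your proof is correct and takes essentially the same route as the paper: both reduce blockwise to the Fourier-coefficient identity $\PhiSymp\bigl(\widehat{F}^{(\tau)}(\bm m)\bigr)=\widehat{G_{\tau}}(\bm m)$ and then check it by a sign-bookkeeping computation. The only cosmetic difference is that you invoke Lemma~\ref{lem:Phi-sandwich-functional} for the left-hand side (using also $\widehat{\overline{W}}(-\bm m)=\overline{\widehat{W}(\bm m)}$ to match the paper's displayed form), whereas the paper re-derives that block directly from the covariance identity \eqref{eq:Phi-cov-used}; either way the content is identical.
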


\MovedProofMarker{p11}{0}{proof:p11}{app:toeplitz-proofs}

\medskip

We now treat the two-sided sandwich case using the involution $r_{S_L}:\TT^{d}\to\TT^{d}$ from Proposition~\ref{prop:Toeplitz-right-reduction}.
Note that $r_{S_L}\circ(-\mathrm{id})$ flips the $S_{R}$-coordinates and that $-\,r_{S_L}(-\btheta)=r_{S_L}(\btheta)$.

\begin{theorem}[Sandwich embedding]\label{thm:sandwich+perm-rect-r}
Let $F(\btheta)=Z(\btheta)+W(\btheta)\J$ with $Z,W:\TT^{d}\to\CC_{\I}^{\,s\times t}$ and fix an ordered partition $S_L\dot\cup S_R=[d]$.
Define the complex symbol
\[
G_{S_L,S_R}(\btheta)=
\begin{pmatrix}
Z(\btheta) & W\!\bigl(r_{S_L}(-\btheta)\bigr)\\[2pt]
-\overline{W\!\bigl(r_{S_L}(\btheta)\bigr)} & \overline{Z(-\btheta)}
\end{pmatrix}.
\]
Then, for every $\bn\in\NN^{d}$,
\begin{equation}\label{eq:sandwich-embed-r}
\PhiSymp\!\bigl(T_{\bn}^{(S_L,S_R)}(F)\bigr)=T_{\bn}\!\bigl(G_{S_L,S_R}\bigr).
\end{equation}
\val{In the purely left case $S_{L}=[d]$, $S_{R}=\varnothing$, one recovers}
\[
G_{\mathrm L}(\btheta)=
\begin{pmatrix}
Z(\btheta) & W(\btheta)\\[2pt]
-\overline{W(-\btheta)} & \overline{Z(-\btheta)}
\end{pmatrix}.
\]
\val{In the purely right case $S_{L}=\varnothing$, $S_{R}=[d]$, one obtains}
\[
G_{\mathrm R}(\btheta)=
\begin{pmatrix}
Z(\btheta) & W(-\btheta)\\[2pt]
-\overline{W(\btheta)} & \overline{Z(-\btheta)}
\end{pmatrix}.
\]
\end{theorem}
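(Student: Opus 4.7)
The plan is to verify the matrix identity \eqref{eq:sandwich-embed-r} block-entrywise by showing that the $(\alpha,\beta)$ complex $2s\times 2t$ block of each side agrees. Since $\PhiSymp$ is applied $s\times t$-blockwise to $T_{\bn}^{(S_L,S_R)}(F)$, its $(\alpha,\beta)$ block equals $\PhiSymp(\widehat{F}^{(S_L,S_R)}(\alpha-\beta))$, and on the right-hand side the $(\alpha,\beta)$ block of $T_{\bn}(G_{S_L,S_R})$ is the ordinary complex Fourier coefficient $\widehat{G}_{S_L,S_R}(\alpha-\beta)$. Thus the statement reduces, for every $\bm k=\alpha-\beta$, to the single Fourier-level identity
\[
  \PhiSymp\!\bigl(\widehat{F}^{(S_L,S_R)}(\bm k)\bigr) \;=\; \widehat{G}_{S_L,S_R}(\bm k),
\]
which, by Lemma~\ref{lem:Phi-sandwich-functional}, amounts to showing that the four complex Fourier coefficients of the blocks of $G_{S_L,S_R}$ are $\widehat Z(\bm k),\widehat W(\tilde{\bm k}),-\widehat{\overline W}(-\tilde{\bm k}),\widehat{\overline Z}(-\bm k)$ with $\tilde k_j=k_j$ for $j\in S_L$ and $\tilde k_j=-k_j$ for $j\in S_R$.

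The second step is to compute the four Fourier coefficients on the right by elementary changes of variables on $\TT^d$. For the $(1,1)$ block, $\widehat Z(\bm k)$ is immediate. For the $(2,2)$ block, substituting $\bphi=-\btheta$ (measure-preserving on the torus) gives $\widehat{\overline{Z(-\cdot)}}(\bm k)=\widehat{\overline Z}(-\bm k)$. For the off-diagonal blocks the key computation uses the involution $r_{S_L}$: set $\bphi=r_{S_L}(-\btheta)$ in the $(1,2)$ block so that $\btheta=r_{S_L}(-\bphi)$, whence $\langle\bm k,\btheta\rangle=\sum_{j\in S_L}k_j\phi_j-\sum_{j\in S_R}k_j\phi_j=\langle\tilde{\bm k},\bphi\rangle$, yielding $\widehat{W(r_{S_L}(-\cdot))}(\bm k)=\widehat W(\tilde{\bm k})$. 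Symmetrically, the substitution $\bphi=r_{S_L}(\btheta)$ in the $(2,1)$ block produces $\langle\bm k,\btheta\rangle=-\langle\tilde{\bm k},\bphi\rangle$ and hence $\widehat{\overline{W(r_{S_L}(\cdot))}}(\bm k)=\widehat{\overline W}(-\tilde{\bm k})$; the overall minus sign in $G_{S_L,S_R}$ gives the required $-\widehat{\overline W}(-\tilde{\bm k})$. Matching these four coefficients against the block form of $\PhiSymp(\widehat F^{(S_L,S_R)}(\bm k))$ supplied by Lemma~\ref{lem:Phi-sandwich-functional} completes the Fourier-level identity, and \eqref{eq:sandwich-embed-r} follows.

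The two special cases are obtained by plugging the extremal partitions into $r_{S_L}$: for $S_L=[d]$, $S_R=\varnothing$ we have $r_{S_L}=-\mathrm{id}$, so $W(r_{S_L}(-\btheta))=W(\btheta)$ and $W(r_{S_L}(\btheta))=W(-\btheta)$, giving $G_{\mathrm L}$ as stated (and recovering Theorem~\ref{thm:unified-LR-rect}); for $S_L=\varnothing$, $S_R=[d]$ we have $r_{S_L}=\mathrm{id}$, so $W(r_{S_L}(-\btheta))=W(-\btheta)$ and $W(r_{S_L}(\btheta))=W(\btheta)$, giving $G_{\mathrm R}$.

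The main obstacle is purely bookkeeping: one has to keep the two involutions $-\mathrm{id}$ and $r_{S_L}$ straight and, in particular, verify that the compositions $r_{S_L}\circ(-\mathrm{id})$ and $(-\mathrm{id})\circ r_{S_L}=-r_{S_L}$ act oppositely on the $S_R$-coordinates so that the off-diagonal blocks pick up the correct index $\tilde{\bm k}$ (and sign) required by Lemma~\ref{lem:Phi-sandwich-functional}. Once this sign bookkeeping is pinned down, no further ingredient is needed beyond the covariance property \eqref{eq:Phi-cov-used} (which underlies the transport lemma) and the Lebesgue invariance of the torus under coordinatewise reflections.
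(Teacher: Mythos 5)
Your proof is correct and follows essentially the same route as the paper's: reduce the matrix identity to the single Fourier-coefficient identity $\PhiSymp(\widehat F^{(S_L,S_R)}(\bm k)) = \widehat{G_{S_L,S_R}}(\bm k)$, then match the four blocks by measure-preserving changes of variables on $\TT^d$. The only (welcome) difference is one of economy: you invoke the already-established transport Lemma~\ref{lem:Phi-sandwich-functional} to get the block form of $\PhiSymp(\widehat F^{(S_L,S_R)}(\bm k))$ in one step, whereas the paper re-derives it inside the proof directly from the covariance identity \eqref{eq:Phi-cov-used} (writing the indices as $\bm k_L \pm \bm k_R$ rather than $\bm k$ and $\tilde{\bm k}$); your sign bookkeeping for the substitutions $\bphi = r_{S_L}(\mp\btheta)$ and the verification that $r_{S_L}$ commutes with $-\mathrm{id}$ are correct, and the two special cases fall out exactly as you state.
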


\MovedProofMarker{p12}{0}{proof:p12}{app:toeplitz-proofs}

\subsubsection{Singular value and spectral distribution via the embedding approach}
\label{subsubsec:sv-ev-embed}

In this part, we use the embedding identities proved in the previous sections to transfer spectral and singular value
distribution results known for complex block Toeplitz sequences to quaternion Toeplitz
sequences. Throughout, write $\bn=(n_{1},\dots,n_{d})\in\NN^{d}$ and
$N_{\bn}=\prod_{\ell=1}^{d}n_{\ell}$.
For $\tau\in\{L,R,(S_{L},S_{R})\}$ we \val{define} the complex symbol
\[
G_{\tau}(\btheta):=
\begin{cases}
G_{\mathrm L}(\btheta), & \tau=L,\\[2pt]
G_{\mathrm R}(\btheta), & \tau=R,\\[2pt]
G_{S_{L},S_{R}}(\btheta), & \tau=(S_{L},S_{R}),
\end{cases}
\]
where $G_{\mathrm L},G_{\mathrm R},G_{S_{L},S_{R}}$ are those in
Theorems~\ref{thm:unified-LR-rect} and \ref{thm:sandwich+perm-rect-r}.

\begin{theorem}[Singular value distribution {and Hermitian spectral distribution}]\label{thm:SV-qToeplitz-1axis-embed-mv}
Let $F(\btheta)=Z(\btheta)+W(\btheta)\J$ with
$Z,W\in L^{1}(\TT^{d};\CC_{\I}^{\,s\times t})$. For each
$\tau\in\{L,R,(S_{L},S_{R})\}$,
\begin{equation}\label{eq:sv-limit-mv}
\bigl\{\,T_{\bn}^{(\tau)}(F)\,\bigr\}_{\bn}\ \sim_{\sigma}\ G_{\tau}.
\end{equation}
If, in addition, $s=t$ and $T_{\bn}^{(\tau)}(F)$ is Hermitian for all $\bn$,
then also
\[
\bigl\{\,T_{\bn}^{(\tau)}(F)\,\bigr\}_{\bn}\ \sim_{\lambda}\ G_{\tau}.
\]
\end{theorem}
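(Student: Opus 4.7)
The plan is to short-circuit to the complex multilevel block Toeplitz distribution theory via the symplectic embedding. By Definition~\ref{def:sv-symbol-rect} and Definition~\ref{def:emb-symbol}, both conclusions $\sim_{\sigma}$ and $\sim_{\lambda}$ for the quaternion sequence $\{T_{\bn}^{(\tau)}(F)\}_{\bn}$ are, by construction, equivalent to the corresponding \emph{complex} distribution statements for the embedded sequence $\{\PhiSymp(T_{\bn}^{(\tau)}(F))\}_{\bn}$. The key identity, already established in Theorem~\ref{thm:unified-LR-rect} and Theorem~\ref{thm:sandwich+perm-rect-r}, is
\[
\PhiSymp\!\bigl(T_{\bn}^{(\tau)}(F)\bigr)=T_{\bn}(G_{\tau})\qquad\text{for every }\bn\in\NN^{d},
\]
so the problem reduces verbatim to the asymptotics of the complex $2s\times 2t$ block multilevel Toeplitz sequence generated by $G_{\tau}$.

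First, I would verify that $G_{\tau}\in L^{1}(\TT^{d};\CC^{(2s)\times(2t)})$. This is immediate from $Z,W\in L^{1}(\TT^{d};\CC_{\I}^{s\times t})$, since each block entry of $G_{\tau}$ is either $Z$, $W$, $\overline{Z}$, or $\overline{W}$ precomposed with one of the measure-preserving involutions $\btheta\mapsto-\btheta$, $\btheta\mapsto r_{S_{L}}(\btheta)$, $\btheta\mapsto r_{S_{L}}(-\btheta)$, all of which preserve the $L^{1}$ norm. With this in hand, the classical multilevel block Toeplitz singular value distribution result of Tilli--Tyrtyshnikov (in the form of \cite{TilliNota,Donatelli2012,tyrtyshnikov1998}) applied to $G_{\tau}$ yields $\{T_{\bn}(G_{\tau})\}_{\bn}\sim_{\sigma}G_{\tau}$ in the complex sense, and hence, by Definition~\ref{def:sv-symbol-rect}, $\{T_{\bn}^{(\tau)}(F)\}_{\bn}\sim_{\sigma}G_{\tau}$. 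The matching of normalizations is automatic: the factor $2r_{n}$ in the denominator of Definition~\ref{def:sv-symbol-rect} absorbs exactly the duplication of singular values produced by the block embedding (Proposition~\ref{prop:phi_dup_sv}) and the doubling of matrix dimensions $s\to 2s$, $t\to 2t$ in $G_{\tau}$.

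For the Hermitian statement (with $s=t$), the argument is parallel but needs to propagate the Hermitian character across the embedding. Since $\PhiSymp$ is a $*$-algebra homomorphism (Lemma~\ref{lem:phi_star}), hermiticity of $T_{\bn}^{(\tau)}(F)$ implies that $T_{\bn}(G_{\tau})=\PhiSymp(T_{\bn}^{(\tau)}(F))$ is Hermitian for every $\bn$; by the standard complex Toeplitz Fourier criterion this forces $G_{\tau}(\btheta)$ to be Hermitian-valued for a.e.\ $\btheta\in\TT^{d}$. The Hermitian multilevel block Toeplitz eigenvalue distribution theorem for $L^{1}$ symbols then gives $\{T_{\bn}(G_{\tau})\}_{\bn}\sim_{\lambda}G_{\tau}$, and invoking Definition~\ref{def:emb-symbol} translates this back to the desired $\{T_{\bn}^{(\tau)}(F)\}_{\bn}\sim_{\lambda}G_{\tau}$.

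The main conceptual obstacle, rather than any hard estimate, is checking that the embedding apparatus genuinely commutes with the choice of test function class used in the distribution definitions. Concretely, one must observe that the normalization $\frac{1}{2d_{n}}\sum_{j=1}^{2d_{n}}F(\lambda_{j}(\PhiSymp(A_{n})))$ in Definition~\ref{def:emb-symbol} matches the classical complex $\sim_{\lambda}$ normalization applied to $T_{\bn}(G_{\tau})$, whose matrix dimension is $2sN_{\bn}$, and that the integral average against $\frac{1}{q}\sum_{i=1}^{q}F(\lambda_{i}(G(x)))$ uses $q=2s$ on the complex side. These bookkeeping identities are compatible by design, so the only substantive external input remains the classical complex distribution theorems cited above.
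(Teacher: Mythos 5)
Your proposal is correct and follows essentially the same route as the paper: reduce to the complex $2s\times 2t$ block multilevel Toeplitz sequence $\{T_{\bn}(G_{\tau})\}_{\bn}$ via the embedding identities of Theorems~\ref{thm:unified-LR-rect} and \ref{thm:sandwich+perm-rect-r}, invoke the classical complex $L^1$ singular value (resp.\ Hermitian eigenvalue) distribution theorems, and transfer back using the singular value/eigenvalue duplication under $\PhiSymp$. Your added checks—that $G_{\tau}\in L^1$ via measure-preserving involutions, and that $T_{\bn}(G_{\tau})$ Hermitian for all $\bn$ forces $G_{\tau}$ Hermitian a.e.—are small but useful explicit steps that the paper leaves implicit.
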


\MovedProofMarker{p13}{0}{proof:p13}{app:toeplitz-proofs}
\val{The next result uses the labeling-free spectral essential range from Definition~\ref{def:complex-spectral-essential-range}.}

Under mild topological assumptions on the spectral essential range of the embedding
symbols, \val{collected in Theorem~\ref{thm:complex-tilli}}, we can go beyond the Hermitian case and describe the (canonical) eigenvalue
distribution of non-Hermitian Toeplitz sequences generated by bounded functions.

\begin{theorem}[Spectral distribution for Tilli-class symbols]\label{thm:EV-qToeplitz-1axis-embed-mv}
Assume $s=t$ and $F(\btheta)=Z(\btheta)+W(\btheta)\J$
with $Z,W\in L^{\infty}(\TT^{d};\CC_{\I}^{\,s\times s})$. For
$\tau\in\{L,R,(S_{L},S_{R})\}$, consider the spectral essential range $R(G_\tau)$.
Suppose that $G_\tau$ belongs to the Tilli class, i.e.,
\[
\qquad \mathrm{int}\,R(G_\tau)=\varnothing
\ \ \text{and}\ \ \CC\setminus R(G_\tau)\ \text{is connected}.
\]
Then
\begin{equation}\label{eq:ev-limit-mv}
\bigl\{\,T_{\bn}^{(\tau)}(F)\,\bigr\}_{\bn}\ \sim_{\lambda}\ G_{\tau}.
\end{equation}
\end{theorem}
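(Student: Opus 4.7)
The plan is to push the entire problem through the symplectic embedding and then appeal to the classical block multilevel Tilli theorem on the complex side. By Definition~\ref{def:emb-symbol}, the assertion $\{T_{\bn}^{(\tau)}(F)\}_{\bn}\sim_{\lambda}G_{\tau}$ is, by fiat, the statement that $\{\PhiSymp(T_{\bn}^{(\tau)}(F))\}_{\bn}\sim_{\lambda}G_{\tau}$ in the standard complex distribution sense. By Theorems~\ref{thm:unified-LR-rect} and \ref{thm:sandwich+perm-rect-r}, the embedded sequence coincides identically with the complex block multilevel Toeplitz sequence $\{T_{\bn}(G_{\tau})\}_{\bn}$. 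The entire theorem therefore collapses to proving $\{T_{\bn}(G_{\tau})\}_{\bn}\sim_{\lambda}G_{\tau}$ on $\TT^{d}$ in the classical sense.

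First I would check that $G_{\tau}\in L^{\infty}(\TT^{d};\CC^{2s\times 2s})$. Inspecting the explicit forms of $G_{\mathrm L}$, $G_{\mathrm R}$, and $G_{S_{L},S_{R}}$, each block entry is one of $Z$, $W$, $\overline{Z}$, $\overline{W}$ composed with a measure-preserving involution of $\TT^{d}$ (either $\btheta\mapsto -\btheta$ or $\btheta\mapsto r_{S_{L}}(\btheta)$ or $\btheta\mapsto r_{S_{L}}(-\btheta)$). Essential suprema are invariant under such involutions, so $\|G_{\tau}\|_{L^{\infty}}\le \|Z\|_{L^{\infty}}+\|W\|_{L^{\infty}}<\infty$. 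The standing assumption in the theorem already places $R(G_{\tau})$ in the Tilli class, so the embedded symbol verifies exactly the hypotheses required by the complex multilevel block Tilli theorem.

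I would then invoke that classical complex result: for a bounded matrix-valued multilevel symbol whose spectral essential range has empty interior and connected complement, the associated complex block Toeplitz sequence is spectrally distributed as the symbol. This is \cite{TilliComplex} in the scalar case and \cite{Donatelli2012} (with the block multilevel extensions summarized in \cite{garoni_multilevel_2018,GLTblock2}) in the generality we need. Applied to $G_{\tau}$ this yields $\{T_{\bn}(G_{\tau})\}_{\bn}\sim_{\lambda}G_{\tau}$. Feeding this back through the embedding identity and Definition~\ref{def:emb-symbol}, and noting that the block size $q=2s$ on the complex side matches the dimensional normalization $2sN_{\bn}$ of the embedded matrix with no extra scaling, delivers \eqref{eq:ev-limit-mv}.

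The main work therefore lives entirely inside the ingredient that we are citing, not inside the transfer step: the multilevel block Tilli theorem itself, whose proof classically combines the Szeg\H{o}-type singular value distribution (which we independently have here through Theorem~\ref{thm:SV-qToeplitz-1axis-embed-mv}) with a Mergelyan-type approximation that uses precisely the empty-interior and connected-complement hypotheses to deduce spectral from singular value distribution. Once that result is taken as known, the quaternion-side conclusion is automatic from the embedding identities of Section~\ref{subsec:symp-embed-Toeplitz}; in particular no Hermitian assumption on $F$, no further regularity beyond essential boundedness of $Z$ and $W$, and no change of base slice is required.
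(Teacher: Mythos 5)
Your proposal is correct and follows the paper's own proof essentially verbatim: embed via $\PhiSymp$, use the Toeplitz embedding identities from Theorems~\ref{thm:unified-LR-rect} and \ref{thm:sandwich+perm-rect-r} to recognize $\PhiSymp(T_{\bn}^{(\tau)}(F))=T_{\bn}(G_\tau)$, check $G_\tau\in L^\infty$, and invoke the complex block multilevel Tilli theorem. The only additions beyond the paper's argument are the (correct and harmless) explicit verification of the $L^\infty$ bound and the closing commentary on where the real work resides.
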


\MovedProofMarker{p14}{0}{proof:p14}{app:toeplitz-proofs}

\subsection{Schatten--norm estimates}
\label{sec:schatten-1axis-embed-mv}

In this section we derive Schatten bounds for quaternionic multilevel Toeplitz
matrices via the complex symplectic embedding. Let
\(F(\btheta)=Z(\btheta)+W(\btheta)\J\) with
\(Z,W\in L^{p}(\TT^{d};\CC_{\I}^{\,s\times t})\) and \(1\le p\le\infty\).
We use the embedded complex symbols from
Theorems~\ref{thm:unified-LR-rect} and \ref{thm:sandwich+perm-rect-r}, denoted
\(G_{\tau}(\btheta)\) with \(\tau\in\{L,R,(S_{L},S_{R})\}\).
\val{By Corollary~\ref{cor:s_p}, the symplectic embedding satisfies}
\[
\|\PhiSymp(X)\|_{p}=2^{1/p}\,\|X\|_{p}
\quad\text{(with the convention \(2^{1/\infty}=1\)),}
\]
\val{and integrating this identity gives}
\begin{equation}\label{eq:Lp-phi-isometry}
\|\PhiSymp\!\circ F\|_{L^{p}(\TT^{d};\CC^{(2s)\times(2t)})}
=2^{1/p}\,\|F\|_{L^{p}(\TT^{d};\HH^{s\times t})}.
\end{equation}

\val{Lemma~\ref{lem:Tilli-Sp-mv} in Appendix~\ref{app:complex-results} states the complex Toeplitz Schatten estimate used below.}

\begin{theorem}[Quaternionic matrix-valued Toeplitz--Schatten bound]\label{thm:q-schatten-coarse-mv}
Let \(1\le p\le\infty\) and \(F\in L^{p}(\TT^{d};\HH^{s\times t})\).
For any \(\tau\in\{L,R,(S_{L},S_{R})\}\) and \(\bn\in\NN^{d}\), with
\(N_{\bn}:=\prod_{\ell=1}^{d} n_{\ell}\),
\begin{equation}\label{eq:Schatten-main-mv-coarse}
\bigl\|T^{(\tau)}_{\bn}(F)\bigr\|_{p}
\ \le\ {4}\left(\frac{N_{\bn}}{(2\pi)^{d}}\right)^{\!1/p}\,\|F\|_{L^{p}(\TT^{d};\HH^{s\times t})}.
\end{equation}
\end{theorem}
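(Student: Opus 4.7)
The plan is to transfer the estimate to the complex side via the symplectic embedding and then invoke Lemma~\ref{lem:Tilli-Sp-mv}. For each $\tau\in\{L,R,(S_L,S_R)\}$, Theorems~\ref{thm:unified-LR-rect} and \ref{thm:sandwich+perm-rect-r} give $\PhiSymp\bigl(T^{(\tau)}_\bn(F)\bigr)=T_\bn(G_\tau)$, and Corollary~\ref{cor:s_p} yields $\|T^{(\tau)}_\bn(F)\|_p=2^{-1/p}\|T_\bn(G_\tau)\|_p$. Applying Lemma~\ref{lem:Tilli-Sp-mv} to $T_\bn(G_\tau)$ then reduces the claim to the functional estimate
\[
\|G_\tau\|_{L^p(\TT^d;\CC^{2s\times 2t})}\ \le\ 4\,\|F\|_{L^p(\TT^d;\HH^{s\times t})},
\]
since combining with the prefactor $2^{-1/p}\le 1$ immediately gives the stated constant $4$.

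The next step is a pointwise estimate on $G_\tau(\btheta)$. In every case $G_\tau(\btheta)$ is a $2\times 2$ block matrix whose four nonzero blocks are $Z$ and $\overline Z$ evaluated at $\btheta$ or $-\btheta$, together with $W$ and $\overline W$ evaluated at one of the measure-preserving involutions $\mathrm{id}$, $-\mathrm{id}$, $r_{S_L}$, or $r_{S_L}\circ(-\mathrm{id})$. Writing $G_\tau(\btheta)$ as a sum of four zero-padded single-block matrices (padding does not alter nonzero singular values) and applying the triangle inequality for the Schatten $p$-norm yields
\[
\|G_\tau(\btheta)\|_p\ \le\ \|Z(\btheta)\|_p+\|Z(-\btheta)\|_p+\|W(\sigma_1(\btheta))\|_p+\|W(\sigma_2(\btheta))\|_p.
\]
Taking $L^p$ norms via Minkowski and invoking invariance of the Lebesgue measure under each reflection gives $\|G_\tau\|_{L^p}\le 2\|Z\|_{L^p}+2\|W\|_{L^p}$.

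It remains to show the slicewise bound $\|Z\|_{L^p},\|W\|_{L^p}\le\|F\|_{L^p(\HH)}$, which I would prove pointwise by a convex-combination trick. Setting $U_s=\diag(I_s,-I_s)$ and $U_t=\diag(I_t,-I_t)$, conjugation $\PhiSymp(F)\mapsto U_s\PhiSymp(F)U_t$ flips the sign of the antidiagonal blocks of $\PhiSymp(F(\btheta))$. Hence $\tfrac12\bigl(\PhiSymp(F(\btheta))+U_s\PhiSymp(F(\btheta))U_t\bigr)=\diag\bigl(Z(\btheta),\overline{Z(\btheta)}\bigr)$, and unitary invariance together with the triangle inequality yield $\|\diag(Z,\overline Z)(\btheta)\|_p\le\|\PhiSymp(F(\btheta))\|_p=2^{1/p}\|F(\btheta)\|_p$. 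Block-diagonality gives $\|\diag(Z,\overline Z)(\btheta)\|_p=2^{1/p}\|Z(\btheta)\|_p$, and cancelling $2^{1/p}$ produces $\|Z(\btheta)\|_p\le\|F(\btheta)\|_p$; the same argument with a minus sign isolates the block-antidiagonal part and gives $\|W(\btheta)\|_p\le\|F(\btheta)\|_p$. Integrating and chaining all the bounds yields $\|G_\tau\|_{L^p}\le 4\|F\|_{L^p(\HH)}$, completing the proof.

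The main thing to watch is the coherent cancellation of the three powers of $2^{\pm 1/p}$: one from Corollary~\ref{cor:s_p}, one from the identity $\|\diag(Z,\overline Z)\|_p=2^{1/p}\|Z\|_p$, and one from $\|\PhiSymp(F)\|_p=2^{1/p}\|F\|_p$. No step is mathematically deep; the stated constant $4$ drops out cleanly from the four-block estimate, although a sharper constant $2$ is in fact attainable by splitting $G_\tau$ into only its block-diagonal and block-antidiagonal parts instead of four individual blocks.
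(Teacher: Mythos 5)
Your proof is correct, and its skeleton matches the paper's: embed via $\PhiSymp$, invoke Lemma~\ref{lem:Tilli-Sp-mv} and the Schatten scaling $\|T^{(\tau)}_\bn(F)\|_p=2^{-1/p}\|T_\bn(G_\tau)\|_p$, split $G_\tau(\btheta)$ into four zero-padded single-block pieces indexed by the reflections $\mathrm{id}$, $-\mathrm{id}$, $r_{S_L}$, $r_{S_L}\circ(-\mathrm{id})$, then triangle inequality plus measure-preservation of those reflections. Where you diverge is the pointwise step: the paper bounds each piece via $\|P_i\,\PhiSymp(F\circ\varphi_j)\,E_j\|_p\le\|\PhiSymp(F\circ\varphi_j)\|_p=2^{1/p}\|F\circ\varphi_j\|_p$, arriving at $\|G_\tau\|_{L^p}\le 4\cdot 2^{1/p}\|F\|_{L^p(\HH)}$ so that the constant $4$ drops out exactly after the $2^{-1/p}$ cancellation. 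You instead observe that a zero-padded single block has the same nonzero singular values as the block itself, reduce to the slicewise bounds $\|Z(\btheta)\|_p,\|W(\btheta)\|_p\le\|F(\btheta)\|_p$ (proved correctly by the $\diag(I,-I)$ convex-combination trick), and obtain the strictly sharper intermediate estimate $\|G_\tau\|_{L^p}\le 4\|F\|_{L^p(\HH)}$; combined with $2^{-1/p}\le 1$ this still delivers the stated constant $4$, with some slack. Your closing remark is also right: grouping into block-diagonal and block-antidiagonal parts yields $\|G_\tau\|_{L^p}\le 2^{1/p}(\|\diag(Z,\overline{Z\circ(-\mathrm{id})})\|_{L^p}+\dots)\le 2^{1/p}\cdot 2\|F\|_{L^p(\HH)}$, and after the $2^{-1/p}$ scaling this improves the constant to $2$. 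The $2^{\pm 1/p}$ bookkeeping, which you correctly flag as the fragile point, is handled consistently throughout.
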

\val{The constant $4$ is not sharp. The corresponding complex bound in Lemma~\ref{lem:Tilli-Sp-mv} has constant $1$, and the additional factor arises from the embedding. We do not optimize this factor because the a.c.s.\ application does not require sharpness.}
\MovedProofMarker{p15}{0}{proof:p15}{app:toeplitz-proofs}

\subsection{Spectral localization of Hermitian matrices via the $2s\times 2s$ complex symbols}
\label{subsec:embed_bounds-mv}
Lin, Pan, and Ng \cite{lin_hermitian_2025} prove strong spectral localization for
unilevel scalar Hermitian right-quaternion Toeplitz matrices and provide positivity
criteria. We extend these results to the multilevel and block cases via the symplectic
embedding and the complex Toeplitz theory.

\begin{theorem}[Spectral localization of Hermitian Toeplitz matrices]\label{thm:embed_bounds-mv}
\val{Let $F\in L^{1}(\TT^{d};\HH^{s\times s})$, fix
$\tau\in\{L,R,(S_L,S_R)\}$, and let
$G_{\tau}\in L^{1}(\TT^d;\CC^{2s\times2s})$
be the corresponding complex symbol defined in
Theorems~\ref{thm:unified-LR-rect} and~\ref{thm:sandwich+perm-rect-r}.
Assume that $T_{\bn}^{(\tau)}(F)$ is Hermitian for every $\bn\in\NN^{d}$.}
Set
\[
m_{\tau}:=\operatorname*{ess\,inf}_{\boldsymbol\theta\in\TT^{d}}
          \lambda_{\min}\!\bigl(G_{\tau}(\boldsymbol\theta)\bigr),\qquad
M_{\tau}:=\operatorname*{ess\,sup}_{\boldsymbol\theta\in\TT^{d}}
          \lambda_{\max}\!\bigl(G_{\tau}(\boldsymbol\theta)\bigr).
\]
Then, for every $\bn\in\NN^{d}$,
\begin{equation}\label{eq:hermitian_loc}
\sigma\!\bigl(T^{(\tau)}_{\bn}(F)\bigr)\ \subset\ [\,m_{\tau},\,M_{\tau}\,].
\end{equation}
\end{theorem}

\MovedProofMarker{p16}{0}{proof:p16}{app:toeplitz-proofs}

\val{The multilevel block positivity mechanism used here is the
matrix-valued linear-positive-operator property in
\cite[Sec.~3.1, Prop.~3.1]{serra_noncommutative_2002}.
For related unilevel block and preconditioned block Toeplitz spectral
analyses, see \cite{serra_spectral_1999,SIMAX99}.}

\val{For Hermitian matrices $A$ and $B$ of the same size, $A\succeq B$ means that
$A-B$ is positive semidefinite.}

\begin{corollary}[Positive definiteness criterion]\label{cor:HPD}
\val{Let $F\in L^{1}(\TT^{d};\HH^{s\times s})$, fix
$\tau\in\{L,R,(S_L,S_R)\}$, and let
$G_{\tau}\in L^{1}(\TT^d;\CC^{2s\times2s})$
be the corresponding complex symbol defined in
Theorems~\ref{thm:unified-LR-rect} and~\ref{thm:sandwich+perm-rect-r}.
Assume that $T_{\bn}^{(\tau)}(F)$ is Hermitian for every $\bn\in\NN^d$, and set
\[
m_{\tau}:=\operatorname*{ess\,inf}_{\boldsymbol\theta\in\TT^{d}}
\lambda_{\min}\!\bigl(G_{\tau}(\boldsymbol\theta)\bigr).
\]
Then:}
\begin{enumerate}[label=(\alph*),leftmargin=*]
\item If $m_{\tau}>0$, then $T_{\bn}^{(\tau)}(F)$ is Hermitian positive definite
      for every $\bn$, with $\lambda_{\min}\!\bigl(T_{\bn}^{(\tau)}(F)\bigr)\ge m_{\tau}$.
\item Conversely, if there exists $c>0$ such that
      $T_{\bn}^{(\tau)}(F)\succeq c\,I$ for all $\bn$, then $m_{\tau}\ge c$.
\end{enumerate}
\end{corollary}

\MovedProofMarker{p17}{0}{proof:p17}{app:toeplitz-proofs}

\subsection{Toeplitz a.c.s. approximation via circulant matrix sequences}
\label{sec:acs-circulant}

\val{In the present section}, we recover the classical circulant approximation property for Toeplitz sequences in the quaternion setting and provide an alternative derivation of their singular value distribution using a.c.s. theory. In fact, the spectral analysis of Toeplitz sequences through circulant approximation is the ``textbook'' approach to this problem for two reasons: this approach is generally considered elegant and highlights in a clear way the topological structure of the space we are studying; moreover, it has a direct link with the classical preconditioning theory, where the observation that circulant matrices approximate Toeplitz ones created a huge ecosystem of fast Toeplitz preconditioning strategies (see e.g. \cite{Ng-Book} and references there reported). Numerically speaking, we expect the same link also in quaternion problems (see, for example, the fast Strang preconditioner analyzed in \cite{lin_hermitian_2025}). Barbarino and Garoni make a remark in \cite{barbarino_glt_2025}, where the easy and ``rigid'' spectral properties of circulant matrices allow for a simpler analysis of the asymptotic spectral distribution of normal sequences of the GLT class, for which Toeplitz sequences are a subclass. \\
Given these premises, first, we follow the lines of \cite{PanCirculant2024} and decompose multilevel $s\times t$ circulant matrices under the $\I$-axis quaternion discrete Fourier Transform, and we extract the sparsity $X$-pattern composed of blocks of size $s\times t$ or $2s\times 2t$. Up to block permutations, this pattern provides a block-diagonalization. Then, we describe the spectrum of multilevel block quaternion circulant matrices, we show that every Toeplitz matrix of any family admits a proper circulant approximating class of sequences, and we provide an alternative approach to find the singular value or spectral distribution of Toeplitz sequences. \\
We recall the basic notation and tools for the Pan-Ng factorization. For $n\in\NN$, let $A_n\in\{0,1\}^{n\times n}$ be the permutation matrix that maps the canonical basis in $\HH^n$ according to the law $e_s\mapsto e_{(-s)\bmod n}$ and set $A_{\bm n}:=A_{n_1}\otimes\cdots\otimes A_{n_d}$. The QDFT on the one-dimensional $\I$-axis is defined as in the complex case (see \cite{Ell_2014}). More precisely we have
\[
[F_{\I,n}]_{uv}:=n^{-1/2}\exp(-2\pi\I\,uv/n),\qquad u,v=0,\dots,n-1,
\]
with the multilevel transform being $F_{\I}^{(\bm n)}:=F_{\I,n_1}\otimes\cdots\otimes F_{\I,n_d}\in\CC_\I^{N_{\bm n}\times N_{\bm n}}$.
Given the block sizes $s,t\in\NN$ and since, for the time being, we define the DFT in the slice $\CC_\I$, we use the following compact notation
\[
U_L:=F_{\I}^{(\bm n)}\otimes I_s,\qquad U_R:=F_{\I}^{(\bm n)}\otimes I_t.
\]
In this section, most of the time and without loss of generality, we work with right polynomials and right generating functions. Given a right-polynomial, in the following we introduce $d$-level block circulants:

\subsubsection*{Multilevel block quaternion circulants generated by trigonometric polynomials}
We start by recalling the canonical construction of multilevel block circulant matrices in the quaternion setting.
First, given $\bm n=(n_{1},\ldots,n_{d})\in\NN^{d}$ and the multi-index set $\Lambda_{\bm n}$, the elementary periodic multilevel shifts are defined by
\[
P_{\bm n}^{(\brho)}:=P_{n_{1}}^{(\rho_{1})}\otimes\cdots\otimes P_{n_{d}}^{(\rho_{d})},
\qquad [P_{n_k}^{(\rho_k)}]_{ij}:=\mathbf{1}_{\{i\equiv j+\rho_k\ (\mathrm{mod}\ n_k)\}}.
\]
We now define multilevel block circulant matrices generated by matrix-valued right-polynomials. Most of the time, we use the definition based on periodic shifts, due to its algebraic convenience in our derivations, but we also recall the entrywise definition.

\begin{definition}[$d$-level $s\times t$ quaternion block-circulant generated by a right polynomial]
\label{def:bcirc-poly}
Given a right-polynomial $p_{\mathrm{R}}$ with block coefficients $\{p_{\mathrm R}(\brho)\}_{\brho\in \mathcal F}\subset\HH^{s\times t}$, we define the associated
$d$-level $s \times t$ block-circulant matrix as
\[
C_{\bm n}\bigl(p_{\mathrm{R}}\bigr)
:=\sum_{\brho\in \mathcal F}\bigl(P_{\bm n}^{(\brho)}\otimes p_{\mathrm R}(\brho)\bigr)\ \in\ \HH^{(N_{\bm n}s)\times(N_{\bm n}t)},
\qquad N_{\bm n}=\prod_{\ell=1}^{d} n_{\ell}.
\]
Equivalently, we can describe the entries explicitly
\[
\bigl[C_{\bm n}\bigl(p_{\mathrm{R}}\bigr)\bigr]_{\alpha,\beta}
=\sum_{\brho\in \mathcal F} p_{\mathrm R}(\brho)\,\mathbf{1}_{\{\alpha\equiv \beta+\brho\ (\mathrm{mod}\ \bm n)\}},
\]
\end{definition}
To find a suitable block-diagonalization of multilevel block-circulant matrices, we follow the Pan-Ng splitting approach verbatim, in particular the results found in \cite{PanCirculant2024}. The approach in the multilevel and block setting is completely analogous. Further, this approach is purely quaternion algebraic and is suitable for our a.c.s.\ approximation formalism. We note that a full-block diagonalization of unilevel-block circulants and a Fourier-based interpretation of diagonal blocks is also found by Zheng and Ni in \cite{zheng_block_2024} by moving the problem from quaternion to non-associative octonions.   \\
First, write each polynomial coefficient in Cartesian form
\(p_{\mathrm R}(\brho)=p_{\mathrm R}(\brho)^{(0)}+p_{\mathrm R}(\brho)^{(1)}\I+p_{\mathrm R}(\brho)^{(2)}\J+p_{\mathrm R}(\brho)^{(3)}(\I\J)\)
with \(p_{\mathrm R}(\brho)^{(\ell)}\in\RR^{s\times t}\). By expanding the shift representation, we obtain
\[
\begin{aligned}
C_{\bm n}\bigl(p_{\mathrm{R}}\bigr)
&=\sum_{\brho\in \mathcal F}\bigl(P_{\bm n}^{(\brho)}\otimes p_{\mathrm R}(\brho)^{(0)}\bigr)
\;+\;\Bigl(\sum_{\brho\in \mathcal F}P_{\bm n}^{(\brho)}\otimes p_{\mathrm R}(\brho)^{(1)}\Bigr)\,\I\\
&\quad+\;\Bigl(\sum_{\brho\in \mathcal F}P_{\bm n}^{(\brho)}\otimes p_{\mathrm R}(\brho)^{(2)}\Bigr)\,\J
\;+\;\Bigl(\sum_{\brho\in \mathcal F}P_{\bm n}^{(\brho)}\otimes p_{\mathrm R}(\brho)^{(3)}\Bigr)\,(\I\J).
\end{aligned}
\]
In particular, we observe that \(C_{\bm n}\bigl(p_{\mathrm{R}}\bigr)\) is a right-linear combination of four
real multilevel block-circulants. The idea behind our target diagonalization is the observation that applying linearly the QDFT to the circulant components in the complex slice \(\CC_{\I}\) and to the orthogonal circulant components produces different sparsity patterns, and when we sum all of these the sparsity results in a multilevel version of the ``X shape'' decomposition found in \cite{PanCirculant2024}. \\

Next, we need the following elementary algebraic identities of QDFTs.

\begin{lemma}[QDFT identities, 1D and multilevel]\label{lem:qfft-id-detailed}
\val{We consider the fixed imaginary unit \(\I\in\operatorname{Im}\HH\) and identify \(\CC_{\I}\cong\CC\)}.
For \(n\in\NN\) let \(F_{\I,n}\in\CC_{\I}^{n\times n}\) be
\([F_{\I,n}]_{uv}=n^{-1/2}\exp(-2\pi \I\,uv/n)\), \(u,v=0,\dots,n-1\).
Then
\[
F_{\I,n}^{*}F_{\I,n}=I_{n},\qquad F_{\I,n}^{T}=F_{\I,n},\qquad (F_{\I,n})^{2}=A_{n},
\]
where \(A_{n}\) is the reversal permutation (first/last, second/second-last, \dots).
For \(\bn=(n_{1},\dots,n_{d})\) and \(F_{\I}^{(\bn)}:=\bigotimes_{\ell=1}^{d}F_{\I,n_{\ell}}\),
\[
\bigl(F_{\I}^{(\bn)}\bigr)^{*}F_{\I}^{(\bn)}=I_{N},\quad
\bigl(F_{\I}^{(\bn)}\bigr)^{T}=F_{\I}^{(\bn)},\quad
\bigl(F_{\I}^{(\bn)}\bigr)^{2}=A_{\bn}:=\bigotimes_{\ell=1}^{d}A_{n_{\ell}},
\]
with \(N=\prod_{\ell=1}^{d}n_{\ell}\).
\end{lemma}

As a consequence, we also obtain QDFT flips for multilevel block transforms.

\begin{lemma}[\val{QDFT flip identities}]\label{lem:flip-detailed}
\val{Let $N_{\bn}:=\prod_{\ell=1}^{d}n_\ell$. Then}
\begin{equation}\label{eq:flip-detailed}
\val{F_{\I}^{(\bn)}\,(\J I_{N_{\bn}})\,\bigl(F_{\I}^{(\bn)}\bigr)^{*}
=A_{\bn}\,(\J I_{N_{\bn}}),}
\end{equation}
\val{and}
\begin{equation}\label{eq:flip-detailed-ij}
\val{F_{\I}^{(\bn)}\,((\I\J)I_{N_{\bn}})\,\bigl(F_{\I}^{(\bn)}\bigr)^{*}
=A_{\bn}\,((\I\J)I_{N_{\bn}}).}
\end{equation}
\end{lemma}

\val{Proposition~\ref{prop:complex-bcirc-diagonalization} in Appendix~\ref{app:complex-results} states the complex/real DFT diagonalization used below.}
To obtain a block-diagonal sparsity pattern, we need to use permutations. The following ordering description is the multilevel generalization of the one used by \cite{PanCirculant2024}:

\begin{definition}\label{def:Fix}
\val{Let $d\in\NN$, let $\bm n=(n_{1},\ldots,n_{d})\in\NN^{d}$, and let
$\Lambda_{\bm n}$ be the index set defined in \eqref{eq:Lambda}. If
$\bk=(k_{1},\ldots,k_{d})\in\Lambda_{\bm n}$, set
$-\bk:=((-k_{1})\bmod n_{1},\ldots,(-k_{d})\bmod n_{d})$.}
Define the fixed-point set
\[
\mathrm{Fix}:=\{\bk\in\Lambda_{\bm n}:\ \bk\equiv -\bk\ (\mathrm{mod}\ \bm n)\}
=\bigl\{\bk\in\Lambda_{\bm n}:\ 2k_{\ell}\equiv 0\ (\mathrm{mod}\ n_{\ell})\ \forall\,\ell\bigr\}.
\]
\end{definition}

\begin{definition}\label{def:pair}
We denote the standard lexicographic order on $\Lambda_{\bm n}$ by $\prec$, and set
\[
\mathcal K:=\{\bk\in\Lambda_{\bm n}\setminus\mathrm{Fix}:\ \bk\prec -\bk\}.
\]
\val{Set $f:=\#\mathrm{Fix}$ and $q:=\#\mathcal K$, and enumerate the two
sets in lexicographic order as}
\[
\val{\mathrm{Fix}=\{\bk^{(1)},\ldots,\bk^{(f)}\},
\qquad
\mathcal K=\{\bk_1,\ldots,\bk_q\}.}
\]
\val{Define the complete ordered list}
\[
\val{\mathcal L
=\bigl(\bk^{(1)},\ldots,\bk^{(f)},
\bk_1,-\bk_1,\ldots,\bk_q,-\bk_q\bigr)
=:(\boldsymbol\ell_1,\ldots,\boldsymbol\ell_{N_{\bn}}).}
\]
\val{If $e_{\bk}$ denotes the canonical basis vector indexed by
$\bk\in\Lambda_{\bn}$ in the original lexicographic ordering, let $P$ be the
unique permutation matrix satisfying}
\[
\val{P e_{\boldsymbol\ell_j}=e_j,
\qquad j=1,\ldots,N_{\bn}.}
\]
\val{Thus $P$ maps the original lexicographic ordering to $\mathcal L$.
For the multilevel $s\times t$ block case, define
$\Pi_L:=P\otimes I_s$ and $\Pi_R:=P\otimes I_t$.}
\end{definition}
By construction, we have the following identities:
\begin{lemma}\label{lem:PAP}
Let $A_{\bm n}=\bigotimes_{\ell=1}^{d}A_{n_{\ell}}$ be the multilevel exchange permutation as in Lemma~\ref{lem:qfft-id-detailed}.
With $P$ from Definition~\ref{def:pair} and $\mathcal K$ as above,
\[
P\,A_{\bm n}\,P^{*}
= \val{I_{\#\mathrm{Fix}}}\ \oplus\ \bigoplus_{\bk\in\mathcal K}\begin{bmatrix}0&1\\[2pt]1&0\end{bmatrix}.
\]
Consequently,
\[
\Pi_{R}\,(A_{\bm n}\otimes I_{t})\,\Pi_{R}^{*}
= \val{I_{\#\mathrm{Fix}}}\otimes I_{t}\ \oplus\ \bigoplus_{\bk\in\mathcal K}
\begin{bmatrix}0&I_{t}\\[2pt]I_{t}&0\end{bmatrix}.
\]
\end{lemma}

Given these premises, we can derive the canonical QDFT decomposition for multilevel block-circulant matrices:

\begin{proposition}[Canonical QDFT, multilevel rectangular block case]\label{prop:canon-X}
Let $U_{L}:=F_{\I}^{(\bm n)}\otimes I_{s}$ and $U_{R}:=F_{\I}^{(\bm n)}\otimes I_{t}$.
Define, for $\ell=0,1,2,3$,
\[
\Lambda_{\ell}
:=\Bdiag\bigl(\widehat S_{\ell}(\bk)\bigr)_{\bk\in\Lambda_{\bm n}},
\]
where
\[
\widehat S_{\ell}(\bk)=\sum_{\brho\in \mathcal F}p_{\mathrm R}(\brho)^{(\ell)}\,
e^{-2\pi \I\sum_{\ell'=1}^{d} k_{\ell'}\rho_{\ell'}/n_{\ell'}}.
\]
Then
\begin{equation}\label{eq:canonical-form}
U_{L}\,C_{\bm n}\bigl(p_{\mathrm{R}}\bigr)\,U_{R}^{*}
=\Lambda_{0}+\Lambda_{1}\,\I+\Lambda_{2}\,(A_{\bm n}\otimes I_{t})\,\J
+\Lambda_{3}\,(A_{\bm n}\otimes I_{t})\,(\I\J).
\end{equation}
\val{Let $P$ be the permutation defined in Definition~\ref{def:pair}, and set
$\Pi_L:=P\otimes I_s$ and $\Pi_R:=P\otimes I_t$. Applying $\Pi_L$ to the
block rows and $\Pi_R^*$ to the block columns gives}
\[
\Pi_{L}U_{L}\,C_{\bm n}\bigl(p_{\mathrm{R}}\bigr)\,U_{R}^{*}\Pi_{R}^{*}
=\bigoplus_{\bk\in\mathrm{Fix}} \bigl( D_{1}(\bk) {+D_2(\bk)} \bigr)\ \ \oplus\
\val{\bigoplus_{\bk\in\mathcal K}}\begin{bmatrix} D_{1}(\bk) & D_{2}(\bk)\\[2pt] D_{2}(-\bk) & D_{1}(-\bk)\end{bmatrix},
\]
where
\[
D_{1}(\bk):=\widehat S_{0}(\bk)+\widehat S_{1}(\bk)\,\I,\qquad
D_{2}(\bk):=\bigl(\widehat S_{2}(\bk)+\widehat S_{3}(\bk)\,\I\bigr)\,\J.
\]
\end{proposition}

\MovedProofMarker{p18}{1}{proof:p18}{app:toeplitz-proofs}

\subsubsection{The spectrum of multilevel block-circulant matrices and the a.c.s.\ circulant approximation of Toeplitz sequences}
\label{subsubsec:embed-fibers-poly}

\val{In the current section}, we discuss the details of the canonical spectrum and singular values of multilevel block circulant matrices. Then we use these details to find the asymptotic spectral/singular value distribution of the corresponding matrix sequence and prove that every quaternion Toeplitz sequence has a multilevel right block-circulant a.c.s. \\
Let $p_{\mathrm{R}}(\btheta)=\sum_{\brho\in \mathcal F}p_{\mathrm R}(\brho)\,e^{-\I\langle \brho,\btheta\rangle}$
be a $s\times t$ trigonometric polynomial with finite $\mathcal F\subset\ZZ^{d}$\val{. We then construct} the multilevel block-circulant $C_{\bm n}\bigl(p_{\mathrm{R}}\bigr)$
generated by $\{p_{\mathrm R}(\brho)\}_{\brho\in \mathcal F}$ as in Definition~\ref{def:bcirc-poly}
and consider $U_{R}$, $U_{L}$, and $A_{\bm n}$ used in the preceding proposition (Proposition \ref{prop:canon-X}). We need to highlight the fiber property of the block diagonal we obtain with the factorization in Proposition \ref{prop:canon-X}, meaning that their symplectic embeddings are uniform samples of the spectral and singular value symbol of the corresponding \val{circulant} sequence $\{C_{\bm n}\bigl(p_{\mathrm{R}}\bigr)\}_{\bm n}$. \\
Consider the uniform sample set $\Theta$ in $\val{\TT^d}$ with elements
$\btheta_{\bk}= 2\pi\bigl(\bk/\bn)$ for $\bk\in\Lambda_{\bm n}$ and note that, by periodicity $\ -\btheta_{\bk}=\btheta_{-\bk}\ (\mathrm{mod}\ 2\pi)$. \\

\val{Write}
\[
\val{p_{\mathrm R}(\brho)=z_{\brho}+w_{\brho}\J,}
\]
\val{where}
\[
\val{z_{\brho}:=p_{\mathrm R}(\brho)^{(0)}+p_{\mathrm R}(\brho)^{(1)}\I,
\qquad
w_{\brho}:=p_{\mathrm R}(\brho)^{(2)}+p_{\mathrm R}(\brho)^{(3)}\I,}
\]
\val{and $p_{\mathrm R}(\brho)^{(\ell)}\in\RR^{s\times t}$. Define}
\begin{equation}\label{eq:split}
\val{Z(\btheta):=\sum_{\brho\in \mathcal F}z_{\brho}e^{-\I\langle\brho,\btheta\rangle},
\qquad
W(\btheta):=\sum_{\brho\in \mathcal F}w_{\brho}e^{+\I\langle\brho,\btheta\rangle}.}
\end{equation}
\val{The identity $\J e^{-\I t}=e^{+\I t}\J$ implies}
\[
\val{p_{\mathrm R}(\btheta)=Z(\btheta)+W(\btheta)\J,
\qquad
W(-\btheta)=\sum_{\brho\in\mathcal F}w_{\brho}e^{-\I\langle\brho,\btheta\rangle}.}
\]
\val{Thus $W(-\btheta)$ has the same negative Fourier phase as $Z(\btheta)$,
which is the form occurring in the embedded right symbol. For any right symbol
$H=Z_H+W_H\J$, we write}
\begin{equation}\label{eq:GR-bracket}
\val{G_{\mathrm R}[H](\btheta):=
\begin{bmatrix}
Z_H(\btheta)&W_H(-\btheta)\\[2pt]
-\overline{W_H(\btheta)}&\overline{Z_H(-\btheta)}
\end{bmatrix}.}
\end{equation}
For our aims, we need the symplectic embedding in block form $\PhiSymp_1:\HH^{s\times t}\to\CC^{(2s)\times(2t)}$. \\
To obtain the samplings of canonical eigenvalues and singular values of circulant matrices, we make use of the following claim, which is a consequence of Proposition \ref{prop:canon-X}.

\begin{proposition}\label{prop:qdftr-fibers-poly}
Let $p_{\mathrm R}(\btheta)=\sum_{\brho\in \mathcal F}p_{\mathrm R}(\brho)\,e^{-\I\langle \brho,\btheta\rangle}$ be an $s\times t$ trigonometric polynomial with finite $\mathcal F\subset\ZZ^{d}$ and coefficients $p_{\mathrm R}(\brho)\in\HH^{s\times t}$\val{, and let $Z$ and $W$ be defined by \eqref{eq:split}}. Let $C_{\bm n}(p_{\mathrm R})$ be the associated $d$-level $s\times t$ block-circulant. \\
Then, from the QDFT canonical form of $C_{\bm n}(p_{\mathrm R})$ written in the notation
\begin{equation}\label{eq:X-fiber}
\Pi_{L}\,U_{L}\,C_{\bm n}(p_{\mathrm R})\,U_{R}^{*}\,\Pi_{R}^{*}
=\bigoplus_{\bk\in\mathrm{Fix}}\bigl(Z(\btheta_{\bk})+W(-\btheta_{\bk})\J\bigr)\ \ \oplus\
\val{\bigoplus_{\bk\in\mathcal K}}
\begin{bmatrix}
Z(\btheta_{\bk}) & W(-\btheta_{\bk})\J\\[2pt]
W(-\btheta_{-\bk})\J & Z(\btheta_{-\bk})
\end{bmatrix},
\end{equation}
we obtain the embedding of the block diagonals:
\begin{equation}\label{eq:Phi-paired}
\PhiSymp\!\left(
\begin{bmatrix}
Z(\btheta_{\bk}) & W(-\btheta_{\bk})\J\\[2pt]
W(-\btheta_{-\bk})\J & Z(\btheta_{-\bk})
\end{bmatrix}
\right)
=
\begin{bmatrix}
Z(\btheta_{\bk}) & 0 & 0 & W(-\btheta_{\bk})\\[2pt]
0 & \overline{Z(\btheta_{\bk})} & -\overline{W(-\btheta_{\bk})} & 0\\[2pt]
0 & W(-\btheta_{-\bk}) & Z(\btheta_{-\bk}) & 0\\[2pt]
-\overline{W(-\btheta_{-\bk})} & 0 & 0 & \overline{Z(\btheta_{-\bk})}
\end{bmatrix},
\end{equation}
when \val{$\bk\in\mathcal K$}, while for $\bk\in\mathrm{Fix}$ we have
\begin{equation}\label{eq:Phi-fixed}
\PhiSymp\bigl(Z(\btheta_{\bk})+W(-\btheta_{\bk})\J\bigr)
=\begin{bmatrix}
Z(\btheta_{\bk}) & W(-\btheta_{\bk})\\[2pt]
-\overline{W(-\btheta_{\bk})} & \overline{Z(\btheta_{\bk})}
\end{bmatrix}.
\end{equation}
\end{proposition}

\MovedProofMarker{p19}{0}{proof:p19}{app:toeplitz-proofs}
As a corollary, we deduce the asymptotic singular value and canonical spectral distribution of multilevel $s \times t$ circulant matrix sequences as reported in the subsequent result.

\begin{corollary}[Asymptotic spectral/singular value distributions for right-polynomial quaternion circulants]\label{cor:asymp-circ}
\val{Let}
\[
p_{\mathrm R}(\btheta)=\sum_{\brho\in \mathcal F}p_{\mathrm R}(\brho)\,e^{-\I\langle \brho,\btheta\rangle},
\qquad
\mathcal F\subset\ZZ^{d}\ \text{finite},\ \ p_{\mathrm R}(\brho)\in\HH^{s\times t}.
\]
\val{Let $Z$ and $W$ be defined by \eqref{eq:split}, and let
$G_{\mathrm R}[p_{\mathrm R}]$ be the associated complex symbol from
\eqref{eq:GR-bracket}.}
Let $C_{\bm n}(p_{\mathrm R})$ be the corresponding $d$-level $s\times t$ block-circulant. Then
\[
\{C_{\bm n}(p_{\mathrm R})\}_{\bm n}\ \sim_{\sigma}\ \val{G_{\mathrm R}[p_{\mathrm R}]}.
\]
If $s=t$, then also
\[
\{C_{\bm n}(p_{\mathrm R})\}_{\bm n}\ \sim_{\lambda}\ \val{G_{\mathrm R}[p_{\mathrm R}]}.
\]
\end{corollary}

\MovedProofMarker{p20}{0}{proof:p20}{app:toeplitz-proofs}

\begin{theorem}[Right Toeplitz admits an a.c.s.\ of circulants]\label{thm:right-acs-circ}
Let $d\in\NN$, $s,t\in\NN$, and
$F\in L^{1}(\TT^{d};\HH^{s\times t})$. Consider the right Toeplitz family
\[
\{T_{\bm n}^{(R)}(F)\}_{\bm n\in\NN^d}.
\]
\val{Then there exist finite sets $\mathcal S_m\subset\ZZ^{d}$ and coefficients
$A_{m,\bk}\in\HH^{s\times t}$ defining right circulants
\[
B_{\bm n,m}
:=
\sum_{\bk\in\mathcal S_m}
P_{\bm n}^{(-\bk)}\otimes A_{m,\bk}
\]
such that, for every scalar-indexed sequence
$\bm n=\bm n(n)\to\infty$,
\[
\bigl\{\{B_{\bm n(n),m}\}_n\bigr\}_{m\in\NN}
\]
is an a.c.s.\ for
$\{T_{\bm n(n)}^{(R)}(F)\}_n$. The external approximation index
$m\in\NN$ remains scalar.}
\end{theorem}

\MovedProofMarker{p21}{0}{proof:p21}{app:toeplitz-proofs}
\val{The following argument is an alternative a.c.s.-based proof of the singular-value
assertion in Theorem~\ref{thm:SV-qToeplitz-1axis-embed-mv}. The Hermitian
eigenvalue assertion follows from the embedding proof in
Appendix~\ref{app:toeplitz-proofs}.}
\MovedProofMarker{p22}{0}{proof:p22}{app:toeplitz-proofs}

\section{Numerical Results}\label{sec:apps}

In this section, we present numerical {tests focused on showing the results of Theorem~\ref{thm:SV-qToeplitz-1axis-embed-mv} regarding the singular value distribution of the Toeplitz quaternion sequences and the spectral distribution in the Hermitian setting empirically. The experimental setting is standard in asymptotic spectral analysis: indeed we compare the non-decreasing sorted eigenvalues of the Toeplitz matrix against a fine uniform sampling of a monotone non-decreasing rearrangement of the symbol (see \cite{garoni_multilevel_2018} for the definition of monotone rearrangement we use). Owing to Theorem~\ref{thm:SV-qToeplitz-1axis-embed-mv}, we expect an evident match between the two plots as the size of the matrix is large. We further remark that, as observed in \cite{Bogoya2016} and \cite{barbarino_uniform_2025}, this experimental setup coincides with verifying numerically the $L^1$ convergence of quantile functions.\\
More specifically, our experiments focus on showing quantile convergence for \val{two-level} block quaternionic Toeplitz matrices generated by square $2\times2$ \val{two-level} block generating functions. This choice makes the test general, \val{computationally tractable}, and spectrally informative, if applied, for example, to dual-channel 2D systems in signal analysis problems.
\val{The agreement is already good for moderate matrix sizes, although the
statements in Theorem~\ref{thm:SV-qToeplitz-1axis-embed-mv} are asymptotic.}
}


\subsection{\val{Analytical construction of the embedded symbols}}
\label{subsec:worked-symbols}

\val{We first compute the complex slice components of the four generating
functions used below. Let $\btheta=(\theta_1,\theta_2)$ and write}
\[
\val{F(\btheta)=Z(\btheta)+W(\btheta)\J,
\qquad
Z(\btheta),W(\btheta)\in\CC_{\I}^{2\times2}.}
\]
\val{For the four kernel classes used in the experiments, the general embedded
symbol formula gives}
\[
\val{G_L(\btheta)=
\begin{bmatrix}
Z(\theta_1,\theta_2)&W(\theta_1,\theta_2)\\[2pt]
-\overline{W(-\theta_1,-\theta_2)}&\overline{Z(-\theta_1,-\theta_2)}
\end{bmatrix},}
\]
\[
\val{G_R(\btheta)=
\begin{bmatrix}
Z(\theta_1,\theta_2)&W(-\theta_1,-\theta_2)\\[2pt]
-\overline{W(\theta_1,\theta_2)}&\overline{Z(-\theta_1,-\theta_2)}
\end{bmatrix},}
\]
\[
\val{G_{S_{12}}(\btheta)=
\begin{bmatrix}
Z(\theta_1,\theta_2)&W(\theta_1,-\theta_2)\\[2pt]
-\overline{W(-\theta_1,\theta_2)}&\overline{Z(-\theta_1,-\theta_2)}
\end{bmatrix},}
\]
\[
\val{G_{S_{21}}(\btheta)=
\begin{bmatrix}
Z(\theta_1,\theta_2)&W(-\theta_1,\theta_2)\\[2pt]
-\overline{W(\theta_1,-\theta_2)}&\overline{Z(-\theta_1,-\theta_2)}
\end{bmatrix}.}
\]
\val{Indeed, the two sandwich cases follow from}
\[
\val{r_{\{1\}}(\theta_1,\theta_2)=(-\theta_1,\theta_2),
\qquad
r_{\{2\}}(\theta_1,\theta_2)=(\theta_1,-\theta_2).}
\]

\paragraph{\val{Continuous Hermitian symbol.}}
\val{Set}
\[
\val{a(\btheta):=2+\cos\theta_1+\cos\theta_2.}
\]
\val{The slice decomposition of the continuous Hermitian generating function is}
\[
\val{Z_{\mathrm{cH}}(\btheta)=
\begin{bmatrix}
a(\btheta)&\frac12\sin\theta_1\,\I\\[3pt]
-\frac12\sin\theta_1\,\I&a(\btheta)
\end{bmatrix}.}
\]
\[
\val{
W_{\mathrm{cH}}(\btheta)=
\begin{bmatrix}
0&\frac12\sin\theta_2+\frac14\sin(\theta_1-\theta_2)\I\\[3pt]
\frac12\sin\theta_2+\frac14\sin(\theta_1-\theta_2)\I&0
\end{bmatrix}.}
\]
\val{Substitution of $Z_{\mathrm{cH}}$ and $W_{\mathrm{cH}}$ into the four
formulas above gives the embedded symbols for all four kernel classes.}

\paragraph{\val{Continuous non-Hermitian symbol.}}
\val{The two quaternion exponentials that contribute to the second slice satisfy}
\[
\val{e^{\J\theta_2}=\cos\theta_2+\sin\theta_2\,\J,
\qquad
e^{\K\theta_2}=\cos\theta_2+(\I\sin\theta_2)\J.}
\]
\val{Consequently,}
\[
\val{Z_{\mathrm{cN}}(\btheta)=
\begin{bmatrix}
e^{\I\theta_1}+\cos\theta_2&0\\[3pt]
(\cos\theta_1-\sin\theta_2)\I&1+\frac12(\cos\theta_1-\cos\theta_2)
\end{bmatrix},}
\]
\[
\val{W_{\mathrm{cN}}(\btheta)=
\begin{bmatrix}
\sin\theta_2&(\cos\theta_2+\sin\theta_1)\I\\[3pt]
0&\frac12(\sin\theta_1-\I\sin\theta_2)
\end{bmatrix}.}
\]

\paragraph{\val{$L^1$ Hermitian symbol.}}
\val{Let $s_j:=\operatorname{sgn}(\theta_j)$ for $j=1,2$. Since
$\J+\K=(1+\I)\J$, we obtain}
\[
\val{Z_{1\mathrm H}(\btheta)=
\begin{bmatrix}
2&s_1\I\\
-s_1\I&2
\end{bmatrix},
\qquad
W_{1\mathrm H}(\btheta)=
\begin{bmatrix}
0&s_2(1+\I)\\
s_2(1+\I)&0
\end{bmatrix}.}
\]

\paragraph{\val{$L^1$ non-Hermitian symbol.}}
\val{Define}
\[
\val{s_j:=\operatorname{sgn}(\theta_j),
\qquad
h(\btheta):=\operatorname{sgn}\!\left(\sin\theta_1+\frac14\cos\theta_2\right),
\qquad
g(\btheta):=\operatorname{sgn}\!\left(\cos\theta_1-\sin\theta_2\right),}
\]
\[
\val{c_j:=\cos\!\left(\frac{\pi s_j}{2}\right),
\qquad
u_j:=\sin\!\left(\frac{\pi s_j}{2}\right),
\qquad j=1,2.}
\]
\val{The corresponding slice functions are}
\[
\val{Z_{1\mathrm N}(\btheta)=
\begin{bmatrix}
e^{\I\theta_1}+s_1&0\\
g(\btheta)\I&1+\frac14(c_1-c_2)
\end{bmatrix},
\qquad
W_{1\mathrm N}(\btheta)=
\begin{bmatrix}
s_2&h(\btheta)\I\\
0&\frac14(u_1-\I u_2)
\end{bmatrix}.}
\]
\val{Away from the measure-zero sets defined by the equations $\theta_j=0$, $j=1,2$, one has
$c_1=c_2=0$ and $u_j=s_j$, $j=1,2$. Thus the lower-right entries simplify almost
everywhere to the quantities}
\[
\val{(Z_{1\mathrm N})_{22}=1,
\qquad
(W_{1\mathrm N})_{22}=\frac14(s_1-\I s_2).}
\]

\subsection{Continuous symbols}\label{subsec:quant-cont}
Tests use matrix sizes $(n,n)$ with $n=2,8,32,64$ for both Hermitian and non-Hermitian cases.
\paragraph{Continuous Hermitian.}
We consider the following smooth symmetric {function}:
\[
F(\theta_1,\theta_2)=
\begin{bmatrix}
2+\cos\theta_1+\cos\theta_2 &
\tfrac12\bigl(\sin\theta_1\,\I+\sin\theta_2\,\J\bigr)
+\tfrac14\sin(\theta_1-\theta_2)\,\K\\[4pt]
\tfrac12\bigl(\sin\theta_2\,\J-\sin\theta_1\,\I\bigr)
+\tfrac14\sin(\theta_1-\theta_2)\,\K &
2+\cos\theta_1+\cos\theta_2
\end{bmatrix}.
\]
\begin{figure}[H]
  \centering
  \includegraphics[width=\linewidth]{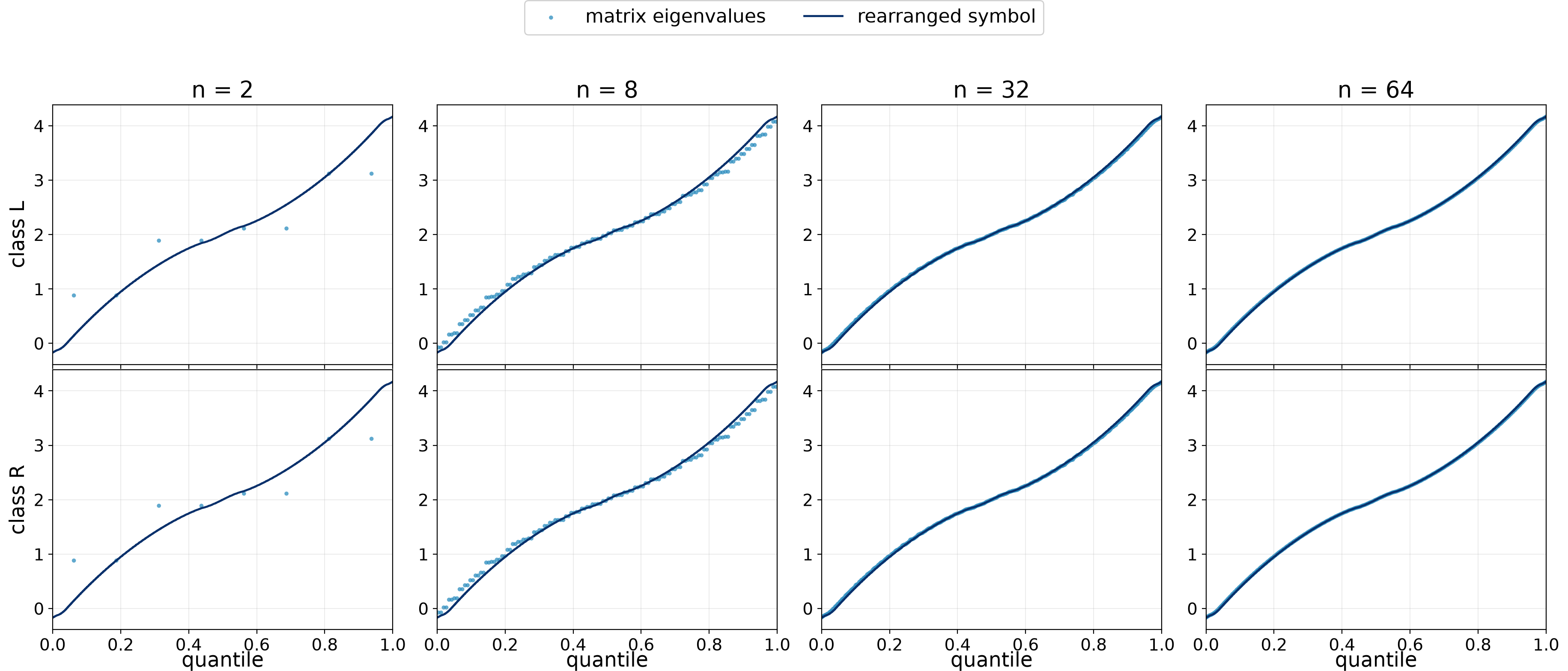}
  \caption{$2\times2$ continuous Hermitian: spectral rearranged symbol (dark blue line) compared with rearranged eigenvalues (blue points). Rows: the one-sided kernel classes $\mathrm{L}$ and $\mathrm{R}$. Columns: the sizes $n=2,8,32,64$.}
\label{fig:quant-2d-2x2-herm}
\end{figure}

\begin{figure}[H]
  \centering
  \includegraphics[width=\linewidth]{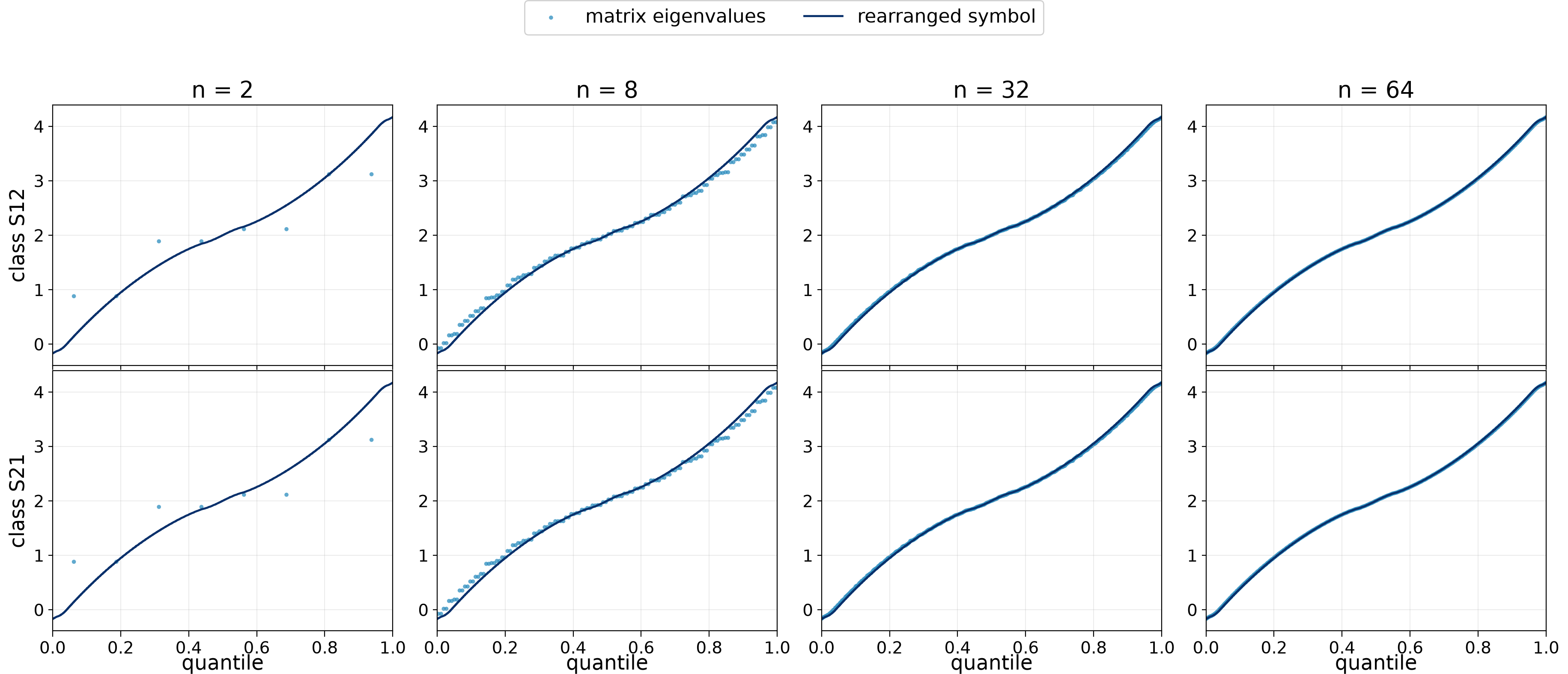}
  \caption{$2\times2$ continuous Hermitian: spectral rearranged symbol (dark blue line) compared with rearranged eigenvalues (blue points). Rows: the sandwich kernel classes $S_{12}$ and $S_{21}$. Columns: the sizes $n=2,8,32,64$.}
\label{fig:quant-2d-2x2-herm-sw}
\end{figure}

\paragraph{Continuous non-Hermitian.}
Consider the following non-symmetric smooth function:
\[
F(\theta_1,\theta_2)=
\begin{bmatrix}
e^{\I\theta_1}+e^{\J\theta_2} &
(\cos\theta_2+\sin\theta_1)\,\K\\[4pt]
(\cos\theta_1-\sin\theta_2)\,\I &
1+\tfrac12\bigl(e^{\J\theta_1}-e^{\K\theta_2}\bigr)
\end{bmatrix}.
\]
\begin{figure}[H]
  \centering
  \includegraphics[width=\linewidth]{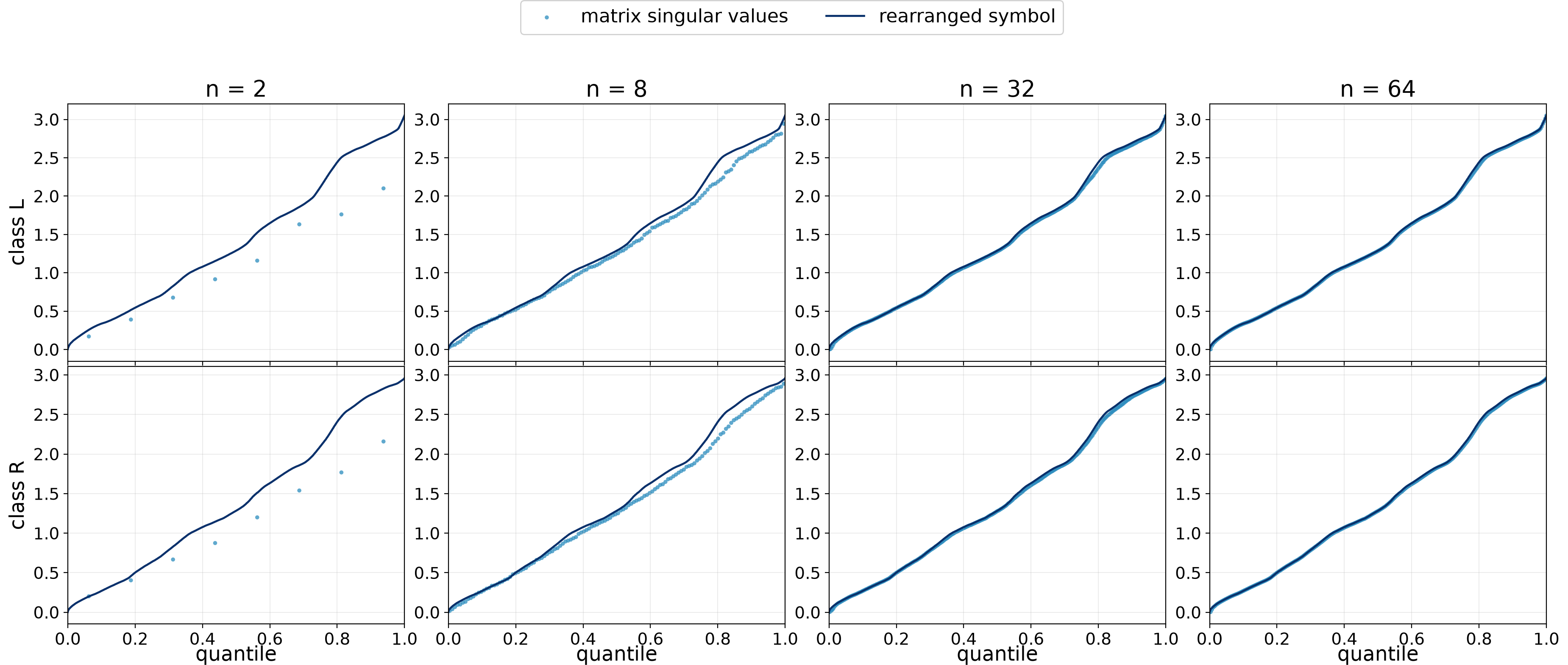}
  \caption{$2\times2$ continuous non-Hermitian: singular value rearranged symbol (dark blue line) compared with rearranged singular values (blue points). Rows: the one-sided kernel classes $\mathrm{L}$ and $\mathrm{R}$. Columns: the sizes $n=2,8,32,64$.}
\label{fig:quant-2d-2x2-nonherm}
\end{figure}

\begin{figure}[H]
  \centering
  \includegraphics[width=\linewidth]{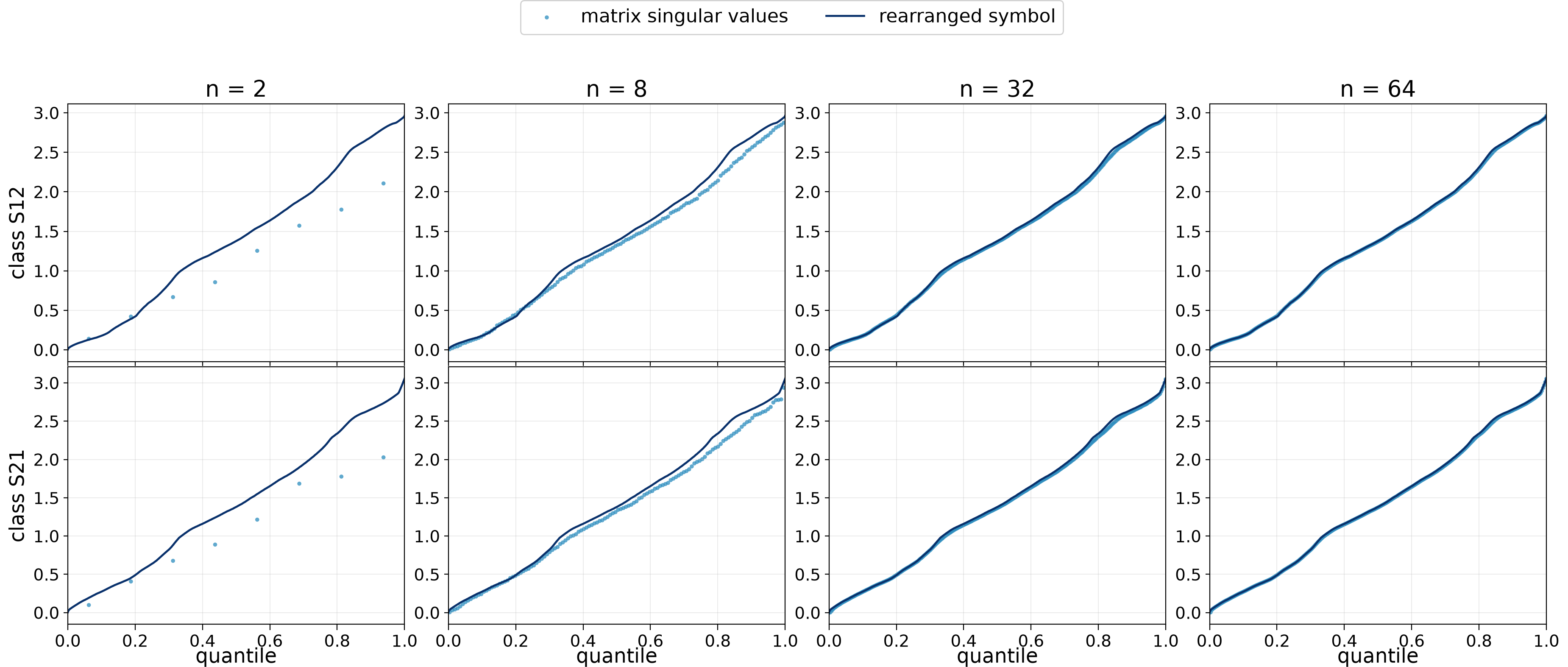}
  \caption{$2\times2$ continuous non-Hermitian: singular value rearranged symbol (dark blue line) compared with rearranged singular values (blue points). Rows: the sandwich kernel classes $S_{12}$ and $S_{21}$. Columns: the sizes $n=2,8,32,64$.}
\label{fig:quant-2d-2x2-nonherm-sw}
\end{figure}

\subsection{$L^1$ symbols}\label{subsec:L1-cont}
Tests use matrix sizes $(n,n)$ with $n=2,8,32,64$ for both Hermitian and non-Hermitian cases.
In the $L^{1}$ examples below, discontinuities are modeled using the sign
function $\operatorname{sgn}(x)$, equal to $+1$, $0$, $-1$ for $x>0$, $x=0$, $x<0$.

\paragraph{$L^1$ Hermitian.}
A \val{piecewise-constant Hermitian} symbol:
\[
F(\theta_1,\theta_2)=
\begin{bmatrix}
2 & \operatorname{sgn}(\theta_1)\,\I+\operatorname{sgn}(\theta_2)(\J+\K)\\[4pt]
-\operatorname{sgn}(\theta_1)\,\I+\operatorname{sgn}(\theta_2)(\J+\K) & 2
\end{bmatrix}.
\]
\begin{figure}[H]
  \centering
  \includegraphics[width=\linewidth]{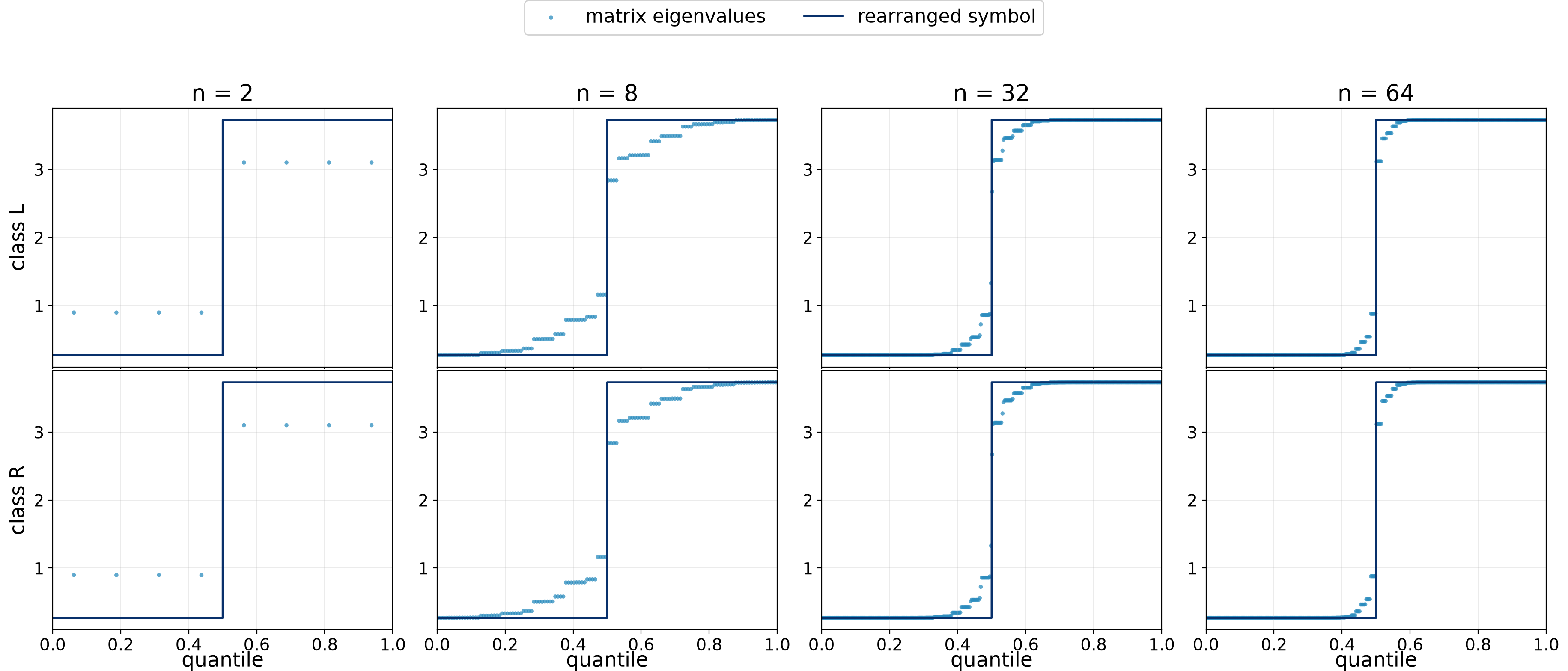}
  \caption{$L^1$ $2\times2$ Hermitian: spectral rearranged symbol (dark blue line) compared with rearranged eigenvalues (blue points). Rows: the one-sided kernel classes $\mathrm{L}$ and $\mathrm{R}$. Columns: the sizes $n=2,8,32,64$.}
\label{fig:quant-2d-2x2-L1-herm}
\end{figure}

\begin{figure}[H]
  \centering
  \includegraphics[width=\linewidth]{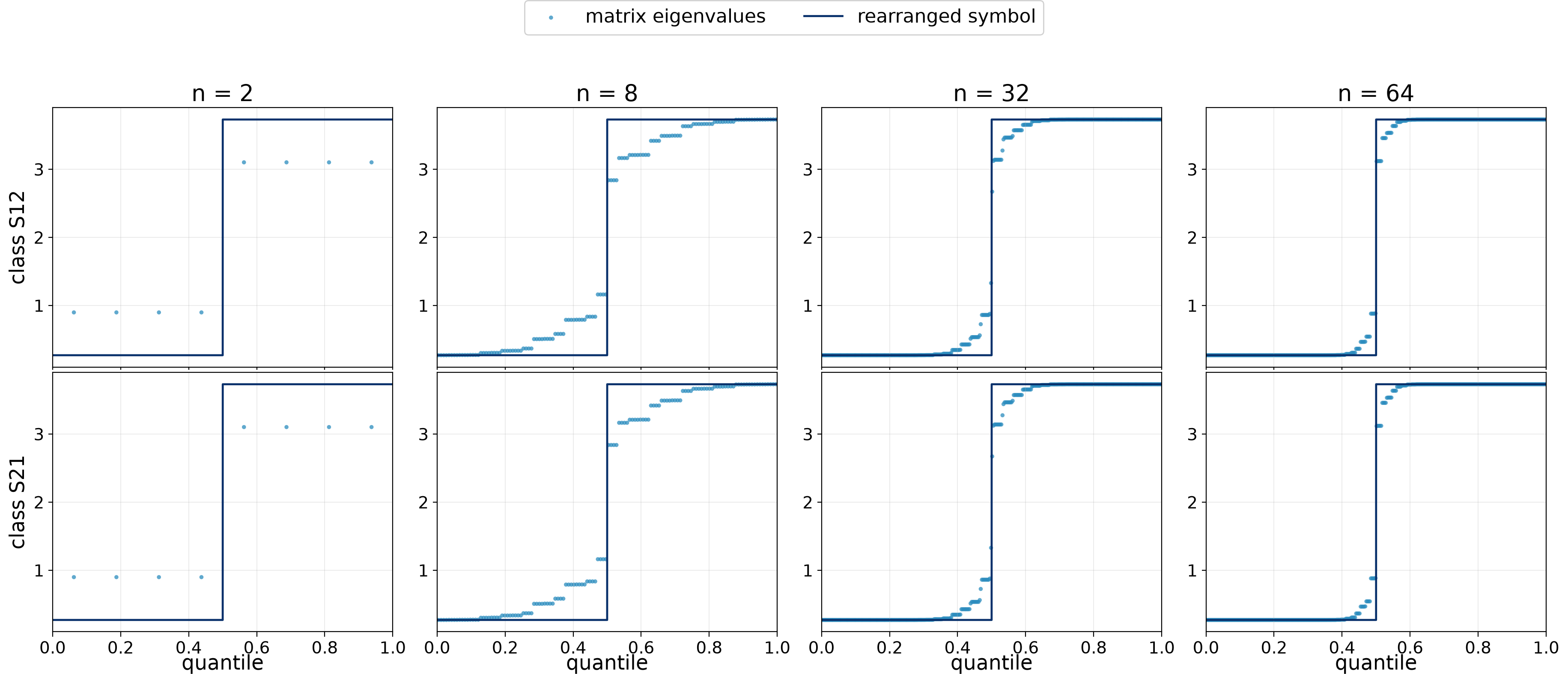}
  \caption{$L^1$ $2\times2$ Hermitian: spectral rearranged symbol (dark blue line) compared with rearranged eigenvalues (blue points). Rows: the sandwich kernel classes $S_{12}$ and $S_{21}$. Columns: the sizes $n=2,8,32,64$.}
\label{fig:quant-2d-2x2-L1-herm-sw}
\end{figure}

\paragraph{$L^1$ non-Hermitian.}
\val{As a final example, consider the non-symmetric function expressed as}
\[
F(\theta_1,\theta_2)=
\begin{bmatrix}
e^{\,\I\theta_1}
+ \operatorname{sgn}(\theta_1)
+ \J\,\operatorname{sgn}(\theta_2)
&
\K\,\operatorname{sgn}\!\bigl(\sin\theta_1+\tfrac14\cos\theta_2\bigr)
\\[6pt]
\I\,\operatorname{sgn}\!\bigl(\cos\theta_1-\sin\theta_2\bigr)
&
1+\tfrac14\!\left(
e^{\,\J\,\operatorname{sgn}(\theta_1)\pi/2}
-
e^{\,\K\,\operatorname{sgn}(\theta_2)\pi/2}
\right)
\end{bmatrix}.
\]

\begin{figure}[H]
  \centering
  \includegraphics[width=\linewidth]{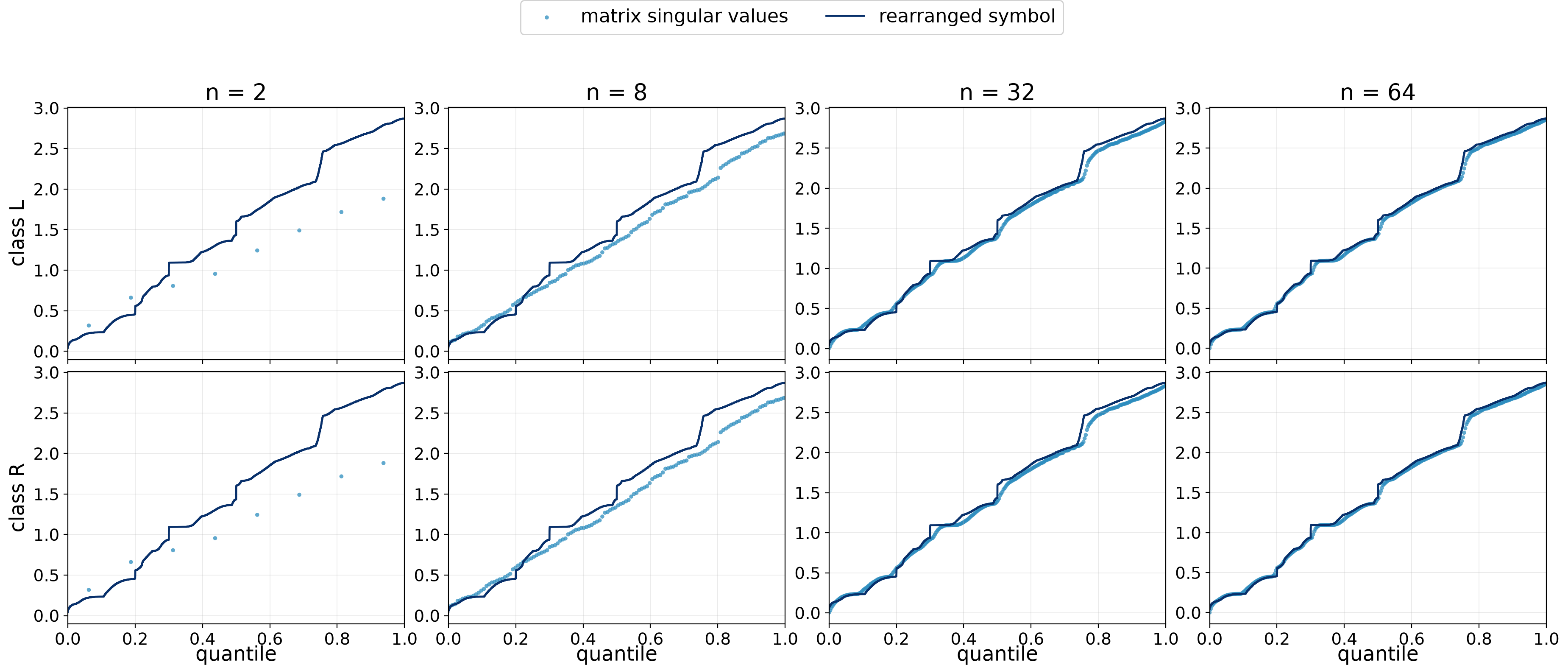}
  \caption{$L^1$ $2\times2$ non-Hermitian: singular value rearranged symbol (dark blue line) compared with rearranged singular values (blue points). Rows: the one-sided kernel classes $\mathrm{L}$ and $\mathrm{R}$. Columns: the sizes $n=2,8,32,64$.}
\label{fig:quant-2d-2x2-L1-nonherm-bis}
\end{figure}

\begin{figure}[H]
  \centering
  \includegraphics[width=\linewidth]{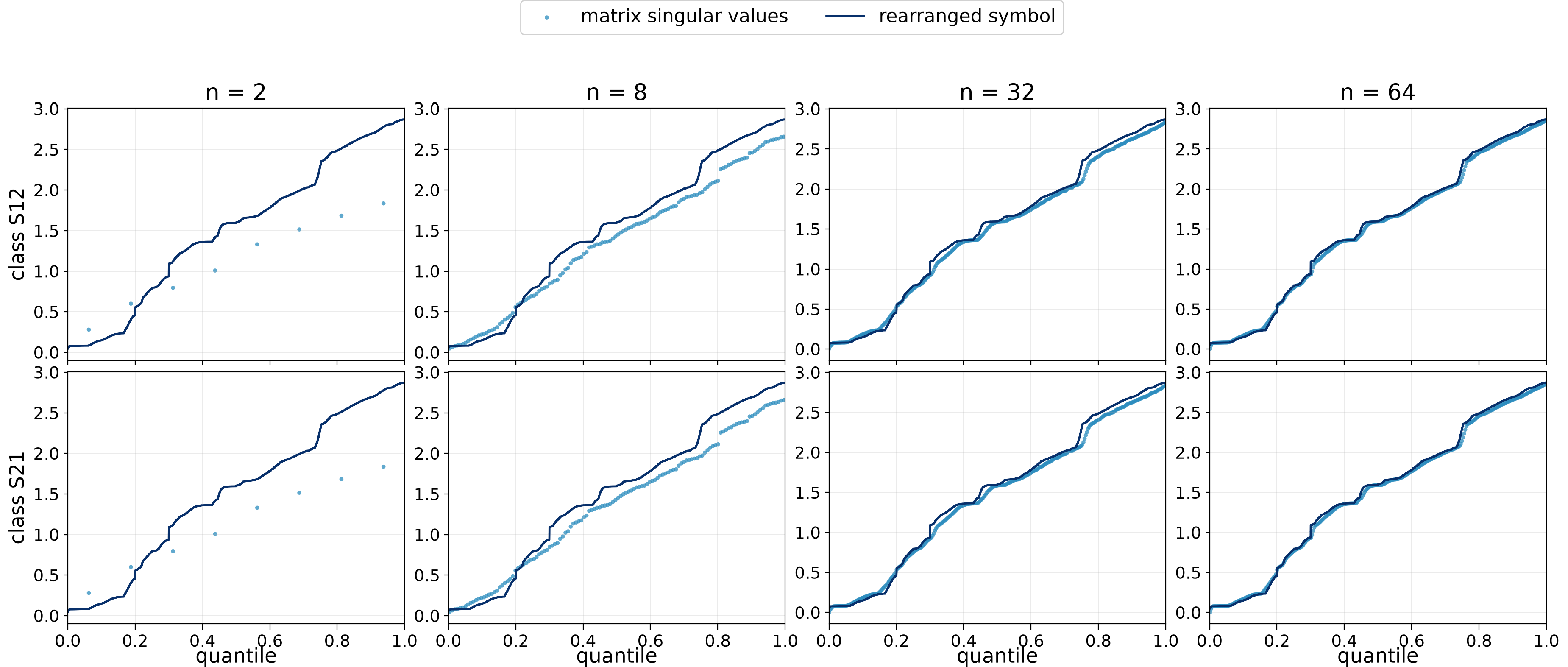}
  \caption{$L^1$ $2\times2$ non-Hermitian: singular value rearranged symbol (dark blue line) compared with rearranged singular values (blue points). Rows: the sandwich kernel classes $S_{12}$ and $S_{21}$. Columns: the sizes $n=2,8,32,64$.}
\label{fig:quant-2d-2x2-L1-nonherm-bis-sw}
\end{figure}

\noindent
\val{Across all four regimes (continuous and $L^1$, Hermitian and
non-Hermitian), the empirical quantile curves approach the corresponding
eigenvalue or singular-value quantiles of the embedded symbols as $\bn$
increases.}

As a final remark, we observe that the agreement between the empirical distributions (of eigenvalues and singular values) and the symbols deduced theoretically is remarkably precise even for small sizes of the quaternion matrices. This fact, which is not covered by our theoretical investigations, should be studied carefully, having in mind quantitative estimates of the convergence speed.

\FloatBarrier
\section{Conclusions and a plan for future investigations}\label{sec:conclusion}

We have presented an exhaustive treatment of Weyl eigenvalue and singular value distributions for single-axis quaternion block multilevel Toeplitz matrix sequences generated by $s\times t$ quaternion matrix-valued $d$-variate Lebesgue integrable generating functions: this study solves in the positive a large part of the program given in {\bf RP 4}, items 1, 2, in \cite{visionary}. Furthermore, we have proved localization results in the quaternion Hermitian setting and we have generalized to the specific quaternion case the notion of approximating class of sequences: in particular, we have shown that block multilevel circulant matrix sequences are good approximation candidates for block multilevel Toeplitz matrix sequences in the a.c.s. sense, to be used for preconditioning purposes.
A wide class of visualizations and numerical tests have been included. As future steps we mention
\begin{itemize}
\item fast preconditioning strategies in Krylov methods, when dealing with large quaternion linear systems stemming from modern applications;
\item the definition and study of the GLT $r$-block $d$ level $*$ algebras class in the quaternion setting, $r,d\ge 1$, having in mind variable coefficient problems;
\item quantitative estimates and asymptotic expansions for the spectra of quaternion matrix sequences;
\item extremal spectral and singular value behavior, mimicking the tools and the results in \cite{extr1,extr2,serra_spectral_1999,SIMAX99,extr3};
\item a general analysis beyond the single-axis case;
\item the use of the present powerful machinery in applications and related numerical algorithms.
\end{itemize}

Regarding the first item, we observe that a notion of linear positive operator can be given in the quaternion setting using the results of the present work. Hence we have the tools for considering the challenge of a Korovkin approximation \cite{Koro-Book} for the Frobenius optimal preconditioners \cite{Koro1,Koro2} in fast algebras, following and adapting the technique developed in \cite{Koro1,Koro2,benedetto_optimal_2000}; see also the Research Project {\bf RP 6} in \cite{visionary}. On a different side, also the topological barriers to the superlinear convergence in the multilevel setting \cite{nega1,nega2} represent an interesting challenge worth investigating.

The second item is very ambitious and requires further study and understanding. Starting from \cite{tau-elliptic}, it has been clear that the use of diagonal plus Toeplitz matrix sequences can be very useful for designing fast algorithms in the case of approximated partial differential equations with variable coefficients, when local numerical schemes are used \cite{garoni_Toeplitz_2017,garoni_multilevel_2018,GLTblock2}: a big challenge is the quaternion or tensor extensions; see also the Research Project {\bf RP 4}, items 3, 4, in \cite{visionary}.

With reference to the third item, we recall that linear in time matrix-less algorithms have been developed in \cite{matrixless1,matrixless2,matrixless3} for the computation of all the eigenvalues in complex Toeplitz setting, based on proper asymptotic expansions (refer to \cite{matrixless3} and references therein). A challenge is to obtain conditions under which a similar type of asymptotic expansions hold in the quaternion case, so allowing the extension of matrix-less algorithms.

Finally, we are convinced that the picture would be complete following the directions indicated in the last three items, having in mind concrete applications and numerical techniques for attacking them.

\section*{Acknowledgements}
Valerio Loi and Stefano Serra-Capizzano are members of the research group GNCS (Gruppo Nazionale per il Calcolo Scientifico) of INdAM (Istituto Nazionale di Alta Matematica).

\long\def\RevisionBibliography{%

}
\appendix
\section[Complex results used in the paper]{\val{Complex results used in the paper}}\label{app:complex-results}

\noindent\val{The present appendix states the complex block multilevel Toeplitz,
approximating-class, and circulant results needed for the embedding arguments
in the main text. It uses the notation and normalization adopted in this
paper. We cite the standard results; the
product-compatible rectangular extension in \(\mathrm{ACS}3\) is proved
directly in Appendix~\ref{app:preliminary-proofs}.}

\subsection*{\val{Notation and Toeplitz distributions}}

\val{Let $\bn=(n_1,\ldots,n_d)\in\NN^d$ and set}
\[
\val{\Lambda_{\bn}:=\prod_{\ell=1}^{d}\{0,\ldots,n_\ell-1\},
\qquad
N_{\bn}:=\prod_{\ell=1}^{d}n_\ell.}
\]
\val{Throughout this appendix, $\bn\to\infty$ means $\min_{1\le\ell\le d}n_\ell\to\infty$. For $G\in L^1(\TT^d;\CC^{a\times b})$, we use}
\[
\val{\widehat G(\bk):=\frac{1}{(2\pi)^d}\int_{\TT^d}
G(\btheta)e^{-\I\langle\bk,\btheta\rangle}\,d\btheta,
\qquad
T_{\bn}(G):=\bigl[\widehat G(\alpha-\beta)\bigr]_{\alpha,\beta\in\Lambda_{\bn}}.}
\]

\begin{theorem}[\val{Block multilevel Toeplitz singular-value distribution}]\label{thm:complex-toeplitz-sv}
\val{Let $G\in L^1(\TT^d;\CC^{a\times b})$. Then}
\[
\val{\{T_{\bn}(G)\}_{\bn}\sim_\sigma(G,\TT^d).}
\]
\val{This is the rectangular block multilevel singular-value main result of \cite{TilliNota}; see also \cite{tyrtyshnikov1998}.}
\end{theorem}

\begin{theorem}[\val{Hermitian complex block multilevel Toeplitz eigenvalue distribution}]\label{thm:complex-toeplitz-hermitian}
\val{Let $G\in L^1(\TT^d;\CC^{a\times a})$ and assume that $G(\btheta)=G(\btheta)^*$ for a.e. $\btheta\in\TT^d$. Then}
\[
\val{\{T_{\bn}(G)\}_{\bn}\sim_\lambda(G,\TT^d).}
\]
\val{See \cite{TilliNota,tyrtyshnikov1998}.}
\end{theorem}

\begin{definition}[\val{Spectral essential range}]\label{def:complex-spectral-essential-range}
\val{Let $G:\TT^d\to\CC^{a\times a}$ be measurable. Its spectral essential range is}
\[
\val{R(G):=\left\{z\in\CC:
\left|\left\{\btheta\in\TT^d:
\dist\!\bigl(z,\sigma(G(\btheta))\bigr)<\varepsilon
\right\}\right|>0
\text{ for every }\varepsilon>0\right\}.}
\]
\val{Definition \ref{def:complex-spectral-essential-range} is independent of any measurable labeling of the eigenvalues.}
\end{definition}

\begin{remark}\label{rem:spectral-essential-range-branches}
\begingroup
\color{red}
Let \(\lambda_1,\ldots,\lambda_a\) be any Borel enumeration of the
eigenvalues of \(G\), provided by
Lemma~\ref{lem:borel-eigenvalue-enumeration}. Then
\[
R(G)
=
\bigcup_{j=1}^{a}
\operatorname{ess\,ran}\lambda_j.
\]

Indeed, if
\(z\in\operatorname{ess\,ran}\lambda_j\) for some \(j\), then
\[
\left|
\left\{
\btheta:
\dist\bigl(z,\sigma(G(\btheta))\bigr)<\varepsilon
\right\}
\right|
\ge
\left|
\left\{
\btheta:
|z-\lambda_j(\btheta)|<\varepsilon
\right\}
\right|
>0
\]
for every \(\varepsilon>0\), so \(z\in R(G)\).

Conversely, suppose \(z\in R(G)\). For every \(k\ge1\),
\[
\left\{
\btheta:
\dist\bigl(z,\sigma(G(\btheta))\bigr)<\frac1k
\right\}
=
\bigcup_{j=1}^{a}
\left\{
\btheta:
|z-\lambda_j(\btheta)|<\frac1k
\right\}
\]
has positive measure. Hence, for each \(k\), at least one set on the
right has positive measure. Since there are only finitely many indices,
there exists a fixed index \(j\) that occurs for infinitely many \(k\).
Along this subsequence, the radii tend to zero, and monotonicity with
respect to the radius yields
\[
\left|
\left\{
\btheta:
|z-\lambda_j(\btheta)|<\varepsilon
\right\}
\right|>0
\qquad\text{for every }\varepsilon>0.
\]
Thus \(z\in\operatorname{ess\,ran}\lambda_j\).
\endgroup
\end{remark}

\begin{theorem}[\val{Complex block multilevel Tilli theorem}]\label{thm:complex-tilli}
\val{Let $G\in L^\infty(\TT^d;\CC^{a\times a})$. If}
\[
\val{\operatorname{int}R(G)=\varnothing,
\qquad
\CC\setminus R(G)\text{ is connected},}
\]
\val{then}
\[
\val{\{T_{\bn}(G)\}_{\bn}\sim_\lambda(G,\TT^d).}
\]
\val{See \cite{Donatelli2012}.}
\end{theorem}

\subsection*{\val{Schatten estimates and spectral localization}}

\begin{lemma}[Complex matrix-valued Toeplitz Schatten bound]\label{lem:Tilli-Sp-mv}
Let \(G\in L^{p}(\TT^{d};\CC^{a\times b})\) with \(1\le p\le\infty\) and
\(\bn\in\NN^{d}\), \(N_{\bn}:=\prod_{\ell=1}^{d}n_{\ell}\).
Then
\begin{equation}\label{eq:Tilli-Sp-mv}
\bigl\|T_{\bn}(G)\bigr\|_{p}
\ \le\
\left(\frac{N_{\bn}}{(2\pi)^{d}}\right)^{\!1/p}\ \|G\|_{L^{p}(\TT^{d};\CC^{a\times b})}.
\end{equation}
Moreover, the constant \(1\) is optimal.
\end{lemma}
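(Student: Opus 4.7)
The statement is a purely complex matrix-valued result, so I will prove it by the standard interpolation strategy: establish the endpoint bounds at $p=\infty$ and $p=1$ and then interpolate. The main technical object needed is an integral representation of $T_{\bn}(G)$ in terms of a rank-one kernel, which makes both endpoints transparent.

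\emph{Step 1 (endpoint $p=\infty$).} I would observe that $T_{\bn}(G)$ is (up to unitary identification) the compression of the multiplication operator $M_G$ on $L^{2}(\TT^{d};\CC^{b})\to L^{2}(\TT^{d};\CC^{a})$ to the finite-dimensional subspaces spanned by exponentials $\{e^{\I\langle\alpha,\btheta\rangle}\otimes e_{j}\}_{\alpha\in\Lambda_{\bn},\,j}$ on both sides. Concretely, define isometries
\[
V_{\bn,a}:\CC^{aN_{\bn}}\to L^{2}(\TT^{d};\CC^{a}),\qquad
(V_{\bn,a}x)(\btheta)=(2\pi)^{-d/2}\sum_{\alpha\in\Lambda_{\bn}}e^{\I\langle\alpha,\btheta\rangle}x_{\alpha},
\]
and likewise $V_{\bn,b}$. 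A direct Fourier computation gives $T_{\bn}(G)=V_{\bn,a}^{*}M_{G}V_{\bn,b}$, whence $\|T_{\bn}(G)\|_{\infty}\le\|M_{G}\|=\|G\|_{L^{\infty}}$, which is the $p=\infty$ case with the constant $(N_{\bn}/(2\pi)^{d})^{0}=1$.

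\emph{Step 2 (endpoint $p=1$).} Setting $\psi(\btheta):=(e^{\I\langle\alpha,\btheta\rangle})_{\alpha\in\Lambda_{\bn}}\in\CC^{N_{\bn}}$, a direct block expansion of $\widehat G(\alpha-\beta)$ yields, up to a fixed permutation,
\[
T_{\bn}(G)\ =\ \frac{1}{(2\pi)^{d}}\int_{\TT^{d}}\bigl(\overline{\psi(\btheta)}\,\psi(\btheta)^{T}\bigr)\otimes G(\btheta)\,d\btheta.
\]
For each $\btheta$, $\overline{\psi(\btheta)}\psi(\btheta)^{T}$ has rank one with unique nonzero singular value $\|\psi(\btheta)\|^{2}=N_{\bn}$, so $\|\overline{\psi(\btheta)}\psi(\btheta)^{T}\|_{p}=N_{\bn}$ for every $p\in[1,\infty]$. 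The Schatten-norm integral triangle inequality and the tensor product identity $\|A\otimes B\|_{p}=\|A\|_{p}\|B\|_{p}$ then give
\[
\|T_{\bn}(G)\|_{1}\ \le\ \frac{1}{(2\pi)^{d}}\int_{\TT^{d}}N_{\bn}\,\|G(\btheta)\|_{1}\,d\btheta
\ =\ \frac{N_{\bn}}{(2\pi)^{d}}\|G\|_{L^{1}},
\]
which is the $p=1$ case.

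\emph{Step 3 (interpolation).} The map $G\mapsto T_{\bn}(G)$ is linear and has been shown to be bounded from $L^{1}(\TT^{d};\CC^{a\times b})$ into $S_{1}$ with norm $\le N_{\bn}/(2\pi)^{d}$ and from $L^{\infty}$ into $S_{\infty}$ with norm $\le 1$. Applying complex interpolation for vector-valued $L^{p}$ and Schatten classes (Riesz–Thorin / Calderón), for every $\theta\in[0,1]$ and $1/p=\theta$ the induced map $L^{p}\to S_{p}$ has norm at most $(N_{\bn}/(2\pi)^{d})^{\theta}\cdot 1^{1-\theta}=(N_{\bn}/(2\pi)^{d})^{1/p}$, which is exactly \eqref{eq:Tilli-Sp-mv}.

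\emph{Step 4 (optimality).} To see that the constant $1$ is sharp, I would test on a constant symbol $G(\btheta)\equiv M\in\CC^{a\times b}$: then $\widehat G(\bm 0)=M$, $\widehat G(\bm k)=0$ for $\bm k\neq\bm 0$, and $T_{\bn}(G)=I_{N_{\bn}}\otimes M$, giving $\|T_{\bn}(G)\|_{p}=N_{\bn}^{1/p}\|M\|_{p}$, while $\|G\|_{L^{p}}=(2\pi)^{d/p}\|M\|_{p}$, so equality is attained.

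The only non-routine point is Step 2: one must be careful to check that, after the standard permutation that identifies the two tensor-product orderings, the representation $T_{\bn}(G)=(2\pi)^{-d}\int(\overline{\psi}\psi^{T})\otimes G\,d\btheta$ is correct and that the Bochner-type integral converges in $S_{1}$ when $G\in L^{1}$. Everything else is a direct translation of the classical scalar complex result to the matrix-valued block setting via tensorization.
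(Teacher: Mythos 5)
The paper gives no proof of Lemma~\ref{lem:Tilli-Sp-mv}; it is quoted as a known complex-analytic fact (the matrix-valued, multilevel Tilli--Schatten bound; compare \cite{serra-capizzano_unitarily_2002}), so there is no internal argument to match yours against. Your proof is correct and is, in fact, the standard way this bound is established. All four steps check out: the compression $T_{\bn}(G)=V_{\bn,a}^{*}M_{G}V_{\bn,b}$ yields the $p=\infty$ endpoint because the $V$'s are isometries and $\|M_G\|=\operatorname*{ess\,sup}_{\btheta}\|G(\btheta)\|$; the rank-one kernel representation together with the Schatten tensor identity $\|A\otimes B\|_{p}=\|A\|_{p}\|B\|_{p}$ yields the $p=1$ endpoint, and the Bochner integral is well defined in $S_1$ since the integrand has $S_1$-norm $(2\pi)^{-d}N_{\bn}\|G(\btheta)\|_1\in L^1(\TT^d)$; complex interpolation is legitimate because $[L^\infty(\TT^d;S_\infty),L^1(\TT^d;S_1)]_{1/p}=L^p(\TT^d;S_p)$ and $[S_\infty,S_1]_{1/p}=S_p$ on the target, giving the stated bound with constant $1^{1-1/p}(N_{\bn}/(2\pi)^d)^{1/p}$; and the constant symbol $G\equiv M$ produces $T_{\bn}(G)=I_{N_{\bn}}\otimes M$ and attains equality for every $p$, so the constant $1$ is optimal. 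Nothing is missing.
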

\val{The estimate is \cite[Cor.~3.5]{serra_noncommutative_2002};
the nonsquare extension is stated in
\cite[Sec.~3.3]{serra_noncommutative_2002}.}

\begin{theorem}[\val{Spectral localization of Hermitian complex block Toeplitz matrices}]\label{thm:complex-toeplitz-localization}
\val{Let $G\in L^1(\TT^d;\CC^{a\times a})$ and assume that $G(\btheta)=G(\btheta)^*$ for a.e. $\btheta$. Set}
\[
\val{m_G:=\operatorname*{ess\,inf}_{\btheta\in\TT^d}\lambda_{\min}(G(\btheta)),
\qquad
M_G:=\operatorname*{ess\,sup}_{\btheta\in\TT^d}\lambda_{\max}(G(\btheta)).}
\]
\val{Then, for every $\bn\in\NN^d$,}
\[
\val{\sigma(T_{\bn}(G))\subseteq[m_G,M_G].}
\]
\val{See \cite{serra_spectral_1999,serra-capizzano_unitarily_2002}.}
\end{theorem}

\subsection*{\val{Complex approximating classes of sequences}}

\begin{definition}[\val{Complex rectangular a.c.s.}]\label{def:complex-rect-acs}
\val{Let $A_n\in\CC^{d_n\times e_n}$ and $r_n:=d_n\wedge e_n\to\infty$. A class of matrix sequences $\bigl\{\{B_{n,m}\}_n\bigr\}_{m\in\NN}$ is an approximating class of sequences for $\{A_n\}_n$ if, for every fixed $m$, there exist $R_{n,m},N_{n,m}\in\CC^{d_n\times e_n}$ and $n_m$ such that, for all $n\ge n_m$,}
\[
\val{A_n=B_{n,m}+R_{n,m}+N_{n,m},
\qquad
\frac{\operatorname{rank}(R_{n,m})}{r_n}\le c(m),
\qquad
\|N_{n,m}\|\le\omega(m),}
\]
\val{where $c(m)\to0$ and $\omega(m)\to0$ as $m\to\infty$.}
\end{definition}
\val{This is \cite[Def.~2.6]{rectangular_glt_2022}.}

\begin{theorem}[\val{Complex a.c.s.\ properties used in the paper}]
\label{thm:complex-acs-properties}
\begingroup
\color{red}

Let \(A_n\in\CC^{d_n\times e_n}\) and set
\[
r_n:=d_n\wedge e_n\longrightarrow\infty.
\]
Then the complex rectangular a.c.s.\ calculus has the following
properties.

\begin{enumerate}

\item[\textup{ACS1.}]
\(\{A_n\}_n\sim_{\sigma}F\) if and only if there exist matrix
sequences
\[
\{B_{n,m}\}_n\sim_{\sigma}F_m
\]
such that
\[
\{B_{n,m}\}_n
\xrightarrow{\mathrm{a.c.s.}}
\{A_n\}_n,
\qquad
F_m\longrightarrow F
\quad\text{in measure}.
\]

\item[\textup{ACS2.}]
If every \(A_n\) is Hermitian and square, then
\(\{A_n\}_n\sim_{\lambda}f\) if and only if there exist Hermitian
matrix sequences
\[
\{B_{n,m}\}_n\sim_{\lambda}f_m
\]
such that
\[
\{B_{n,m}\}_n
\xrightarrow{\mathrm{a.c.s.}}
\{A_n\}_n,
\qquad
f_m\longrightarrow f
\quad\text{in measure}.
\]

\item[\textup{ACS3.}]
Let
\[
A_n,B_{n,m}\in\CC^{d_n\times e_n},
\qquad
A'_n,B'_{n,m}\in\CC^{d'_n\times e'_n},
\]
and assume
\[
\{B_{n,m}\}_n
\xrightarrow{\mathrm{a.c.s.}}
\{A_n\}_n,
\qquad
\{B'_{n,m}\}_n
\xrightarrow{\mathrm{a.c.s.}}
\{A'_n\}_n.
\]
Then
\begin{align*}
&\{B_{n,m}^{*}\}_n
\xrightarrow{\mathrm{a.c.s.}}
\{A_n^{*}\}_n,
\\
&\{\alpha B_{n,m}+\beta B'_{n,m}\}_n
\xrightarrow{\mathrm{a.c.s.}}
\{\alpha A_n+\beta A'_n\}_n
\quad
\text{if }(d'_n,e'_n)=(d_n,e_n),
\\
&\{B_{n,m}B'_{n,m}\}_n
\xrightarrow{\mathrm{a.c.s.}}
\{A_nA'_n\}_n
\quad
\substack{
\text{if }d'_n=e_n,\
\{A_n\}_n,\{A'_n\}_n\text{ are s.u., and}\\[-1pt]
\operatorname{PC}(d,e,e'),
}
\\
&\{B_{n,m}C_n\}_n
\xrightarrow{\mathrm{a.c.s.}}
\{A_nC_n\}_n
\quad
\substack{
\text{if }C_n\in\CC^{e_n\times f_n},\
\{C_n\}_n\text{ is s.u., and}\\[-1pt]
\operatorname{PC}(d,e,f).
}
\end{align*}
Here \(\alpha,\beta\in\CC\), and each product a.c.s.\ is normalized
by the minimum dimension of the product output.

\item[\textup{ACS4.}]
If
\[
A_n=[A_{n,ij}]_{i,j=1}^{s},
\qquad
B_n^{(m)}=[B_{n,ij}^{(m)}]_{i,j=1}^{s},
\]
have a fixed compatible block partition and
\[
\{B_{n,ij}^{(m)}\}_n
\xrightarrow{\mathrm{a.c.s.}}
\{A_{n,ij}\}_n
\qquad
\text{for every }i,j,
\]
then
\[
\{B_n^{(m)}\}_n
\xrightarrow{\mathrm{a.c.s.}}
\{A_n\}_n.
\]

\item[\textup{ACS5.}]
Let \(p\in[1,\infty]\). If, for every \(m\), there exists \(n_m\)
such that, for all \(n\ge n_m\),
\[
\|A_n-B_{n,m}\|_{S_p}
\le
\varepsilon(m,n)\,r_n^{1/p},
\]
where \(r_n^{1/\infty}:=1\) and
\[
\lim_{m\to\infty}\limsup_{n\to\infty}\varepsilon(m,n)=0,
\]
then
\[
\{B_{n,m}\}_n
\xrightarrow{\mathrm{a.c.s.}}
\{A_n\}_n.
\]

\end{enumerate}
\endgroup
\end{theorem}

\noindent
\val{\(\mathrm{ACS}1\), \(\mathrm{ACS}2\), \(\mathrm{ACS}4\), and
\(\mathrm{ACS}5\) are part of the standard complex a.c.s.\ calculus; see
\cite{capizzano_distribution_2001} and
\cite[Chapter~5]{garoni_Toeplitz_2017}. The classical common-size
product proof appears in \cite[Prop.~5.5, pp.~87-88]{garoni_Toeplitz_2017}.
The rectangular normalization follows \cite[Def.~2.6]{rectangular_glt_2022},
and fixed-block rectangular GLT multiplication is given in
\cite[Thm.~4.5]{rectangular_glt_2022}. The dimension-compatible
varying-aspect-ratio product assertions in \(\mathrm{ACS}3\) and
\(\mathrm{ACS}_{\HH}3\) are proved simultaneously below.}

\par\smallskip
\noindent
\hyperref[proof:p25]{%
\val{\emph{The product assertions in
\(\mathrm{ACS}3\) and \(\mathrm{ACS}_{\HH}3\) are proved in
Appendix~\ref*{app:preliminary-proofs}.}}}
\par\smallskip

\subsection*{\val{Block-circulant diagonalization and shift comparison}}

\begin{proposition}[\val{DFT diagonalization of multilevel block circulants}]\label{prop:complex-bcirc-diagonalization}
\val{Let $H:\Lambda_{\bn}\to\CC^{s\times t}$ and}
\[
\val{S:=\sum_{\brho\in\Lambda_{\bn}}P_{\bn}^{(\brho)}\otimes H(\brho),
\qquad
U_L:=F_{\I}^{(\bn)}\otimes I_s,
\qquad
U_R:=F_{\I}^{(\bn)}\otimes I_t.}
\]
\val{For a block family $(X_{\bk})_{\bk\in\Lambda_{\bn}}$, define}
\[
\val{\Bdiag(X_{\bk})_{\bk\in\Lambda_{\bn}}
:=\operatorname{diag}_{\bk\in\Lambda_{\bn}}X_{\bk}.}
\]
\val{Then}
\[
\val{U_LSU_R^*=\Bdiag(\widehat H(\bk))_{\bk\in\Lambda_{\bn}},
\qquad
\widehat H(\bk):=\sum_{\brho\in\Lambda_{\bn}}H(\brho)
e^{-2\pi\I\sum_{\ell=1}^{d}k_\ell\rho_\ell/n_\ell}.}
\]
\val{See \cite{Ng-Book,Golub_VanLoan_2013}.}
\end{proposition}

\begin{lemma}[\val{Toeplitz-circulant shift comparison}]
\label{lem:complex-shift-rank}
\val{For $q\in\ZZ$, define
\[
[J_n^{(q)}]_{ij}:=\mathbf 1_{\{i=j+q\}},
\qquad i,j=0,\ldots,n-1,
\]
and, for $\bk=(k_1,\ldots,k_d)\in\ZZ^d$, set
\[
J_{\bn}^{(\bk)}
:=
\bigotimes_{\ell=1}^{d}J_{n_\ell}^{(k_\ell)}.
\]
Then}
\[
\val{\operatorname{rank}\!\left(J_{\bn}^{(\bk)}-P_{\bn}^{(\bk)}\right)
\le 2\sum_{\ell=1}^{d}|k_\ell|\frac{N_{\bn}}{n_\ell}.}
\]
\val{The estimate follows by applying the one-dimensional boundary-rank estimate in each tensor factor and using a telescoping decomposition; see also \cite{garoni_multilevel_2018}.}
\end{lemma}

\long\def\WorkedAppendix{%
\section[Worked embedded-symbol computations]{\val{Worked embedded-symbol computations}}\label{app:worked}
\noindent\val{We evaluate the four embedded-symbol formulas from
Subsection~\ref{subsec:worked-symbols} at the representative interior point
$\btheta_0=(\pi/3,\pi/4)$ to check them numerically.}

\val{The point $\btheta_0$ is chosen away from the discontinuities of the
$L^1$ symbols. For each generating function, we evaluate the slice functions
$Z,W$ from Subsection~\ref{subsec:worked-symbols} at $\btheta_0$ and at the
reflected arguments occurring in $G_L,G_R,G_{S_{12}},G_{S_{21}}$. In particular,}
\[
  \val{F(\btheta_0)=Z(\btheta_0)+W(\btheta_0)\J,
  \qquad Z(\btheta_0),W(\btheta_0)\in\CC_\I^{\,s\times t}.}
\]
\val{Substituting these values into the four formulas in
Subsection~\ref{subsec:worked-symbols} gives the corresponding $4\times4$
complex matrices. The eigenvalues of $G_\tau(\btheta_0)$ sample the Weyl measure
for Hermitian symbols, while its singular values do so for non-Hermitian symbols.}

\paragraph{Worked example: the continuous Hermitian symbol.}
\val{For the continuous Hermitian slice decomposition in
Subsection~\ref{subsec:worked-symbols}, evaluation at $\btheta_0$ gives}
\[
  Z=\begin{bmatrix} 3.2071 & 0.4330\,\I \\ -0.4330\,\I & 3.2071 \end{bmatrix},\qquad
  W=\begin{bmatrix} 0 & 0.3536+0.0647\,\I \\ 0.3536+0.0647\,\I & 0 \end{bmatrix},
\]
and the four embedded $4\times4$ symbols are
\[
G_L=\begin{bmatrix}
3.2071 & 0.4330\,\I & 0 & 0.3536+0.0647\,\I\\
-0.4330\,\I & 3.2071 & 0.3536+0.0647\,\I & 0\\
0 & 0.3536-0.0647\,\I & 3.2071 & 0.4330\,\I\\
0.3536-0.0647\,\I & 0 & -0.4330\,\I & 3.2071
\end{bmatrix},
\]
\[
G_R=\begin{bmatrix}
3.2071 & 0.4330\,\I & 0 & -0.3536-0.0647\,\I\\
-0.4330\,\I & 3.2071 & -0.3536-0.0647\,\I & 0\\
0 & -0.3536+0.0647\,\I & 3.2071 & 0.4330\,\I\\
-0.3536+0.0647\,\I & 0 & -0.4330\,\I & 3.2071
\end{bmatrix}.
\]
\[
G_{S_{12}}=\begin{bmatrix}
3.2071 & 0.4330\,\I & 0 & -0.3536+0.2415\,\I\\
-0.4330\,\I & 3.2071 & -0.3536+0.2415\,\I & 0\\
0 & -0.3536-0.2415\,\I & 3.2071 & 0.4330\,\I\\
-0.3536-0.2415\,\I & 0 & -0.4330\,\I & 3.2071
\end{bmatrix},
\]
\[
G_{S_{21}}=\begin{bmatrix}
3.2071 & 0.4330\,\I & 0 & 0.3536-0.2415\,\I\\
-0.4330\,\I & 3.2071 & 0.3536-0.2415\,\I & 0\\
0 & 0.3536+0.2415\,\I & 3.2071 & 0.4330\,\I\\
0.3536+0.2415\,\I & 0 & -0.4330\,\I & 3.2071
\end{bmatrix}.
\]
\val{The embedded Hermitian symbols have eigenvalues} $\{2.6444,\,2.6444,\,3.7699,\,3.7699\}$ for $\tau\in\{L,R\}$ and $\{2.5982,\,2.5982,\,3.8161,\,3.8161\}$ for $\tau\in\{S_{12},S_{21}\}$; these are then sampled in non-decreasing order by the quantile of the corresponding quaternion Weyl distribution and reproduce the plot in Figure~\ref{fig:quant-2d-2x2-herm}.

\paragraph{The remaining symbols.}
\val{Evaluating the other three function-level slice decompositions from
Subsection~\ref{subsec:worked-symbols} at $\btheta_0$ gives}
\[
\text{(cont.\ non-Herm.)}\quad
Z=\begin{bmatrix} 1.2071+0.8660\,\I & 0\\ -0.2071\,\I & 0.8964 \end{bmatrix},\;
W=\begin{bmatrix} 0.7071 & 1.5731\,\I\\ 0 & 0.4330-0.3536\,\I \end{bmatrix},
\]
\[
\text{($L^1$ Herm.)}\quad
Z=\begin{bmatrix} 2 & \I\\ -\I & 2 \end{bmatrix},\;
W=\begin{bmatrix} 0 & 1+\I\\ 1+\I & 0 \end{bmatrix};
\]
\[
\text{($L^1$ non-Herm.)}\quad
Z=\begin{bmatrix} 1.5000+0.8660\,\I & 0\\ -\I & 1 \end{bmatrix},\;
W=\begin{bmatrix} 1 & \I\\ 0 & 0.2500-0.2500\,\I \end{bmatrix}.
\]
\val{The four $G_\tau$ are then obtained from the reflected-argument formulas
in Subsection~\ref{subsec:worked-symbols}. Their spectra (eigenvalues for the
Hermitian symbols and singular values otherwise) are collected in
Table~\ref{tab:worked}.}

\begin{table}[H]
\centering
\begin{tabular}{lll}
\toprule
Symbol & Class $\tau$ & Eigenvalues / singular values at $\btheta_0$\\
\midrule
Continuous Hermitian & $L,\,R$ & $2.6444,\ 2.6444,\ 3.7699,\ 3.7699$\\
                     & $S_{12},\,S_{21}$ & $2.5982,\ 2.5982,\ 3.8161,\ 3.8161$\\
\midrule
Continuous non-Hermitian & $L$ & $2.8029,\ 1.6003,\ 1.0385,\ 0.3881$\\
                     & $S_{12}$ & $2.5117,\ 2.0536,\ 1.0360,\ 0.2180$\\
                     & $S_{21}$ & $2.5820,\ 1.7950,\ 1.3230,\ 0.0840$\\
                     & $R$ & $2.6603,\ 1.7932,\ 1.1078,\ 0.3560$\\
\midrule
$L^1$ Hermitian & $L,\,R,\,S_{12},\,S_{21}$ & $0.2679,\ 0.2679,\ 3.7321,\ 3.7321$\\
\midrule
$L^1$ non-Hermitian & $L,\,R$ & $2.6670,\ 1.9417,\ 1.0932,\ 0.4147$\\
                     & $S_{12},\,S_{21}$ & $2.6204,\ 1.8711,\ 1.3568,\ 0.2034$\\
\bottomrule
\end{tabular}
\caption{\val{Embedded-symbol spectra at $\btheta_0=(\pi/3,\pi/4)$ for the four generating functions of Section~\ref{sec:apps}.} }
\label{tab:worked}
\end{table}
}

\section[Proofs of preliminaries and a.c.s. results]{\val{Proofs of preliminary and a.c.s. results}}\label{app:preliminary-proofs}
\noindent\val{The proofs of the preliminary and approximating-class results
appear here in the order of their statements in the main text.}

\subsection*{\val{Preliminary tools}}

\phantomsection\label{proof:p23}
\begingroup
\hypersetup{linkcolor=red}
\noindent
\val{\textbf{Proof of Proposition~\ref{prop:phi_dup_sv}}}
\par\smallskip
\endgroup

\begin{MovedProofContext}{p23}
\begin{proof}
\val{By Theorem~\ref{thm:q_svd} and Lemma~\ref{lem:phi_star},
\(\PhiSymp(A)\) is unitarily equivalent to \(\PhiSymp(D)\), where
\[
  D=\begin{bmatrix}\Sigma&0\\[1pt]0&0\end{bmatrix}.
\]
Since \(D\) is real, \(\PhiSymp(D)\) is, up to fixed row and column
permutations, equal to \(D\oplus D\). Hence every nonzero singular value of
\(A\) occurs twice among the singular values of \(\PhiSymp(A)\), while all
the remaining singular values are zero.}
\end{proof}
\end{MovedProofContext}

\begin{lemma}[\val{Borel eigenvalue enumerations}]
\label{lem:borel-eigenvalue-enumeration}
\begingroup
\color{red}
For every \(m\in\NN\), there exist Borel maps
\[
\mu_1,\ldots,\mu_m:\CC^{m\times m}\longrightarrow\CC
\]
such that, for every \(B\in\CC^{m\times m}\), their values are the
eigenvalues of \(B\), counted with algebraic multiplicity.

Moreover, for every \(r\in\NN\), there exist Borel maps
\[
\lambda_1^+,\ldots,\lambda_r^+:
\HH^{r\times r}\longrightarrow
\{z\in\CC:\operatorname{Im}z\ge0\}
\]
whose values are the canonical eigenvalues.
\endgroup
\end{lemma}

\begin{proof}
\begingroup
\color{red}
By \cite[Cor.~2]{azoff_borel_1974}, for each \(m\) there is a Borel map
\(U_m\) assigning to every \(B\in\CC^{m\times m}\) a unitary matrix such
that
\[
T_m(B):=U_m(B)^*BU_m(B)
\]
is upper triangular. Defining
\[
\mu_j(B):=\bigl(T_m(B)\bigr)_{jj},
\qquad j=1,\ldots,m,
\]
gives Borel maps whose values are the eigenvalues of \(B\).

Now let \(A\in\HH^{r\times r}\), set \(B:=\PhiSymp_1(A)\), and define
\[
\rho(z):=\operatorname{Re}z+\I|\operatorname{Im}z|.
\]
Let
\[
\eta_1(A)\preceq\cdots\preceq\eta_{2r}(A)
\]
be the lexicographically ordered values
\(\rho(\mu_1(B)),\ldots,\rho(\mu_{2r}(B))\). Since finite
lexicographic sorting is Borel, the maps \(\eta_j\) are Borel.

The next symmetry argument follows from
the paired Jordan structure; see
\cite[Thm.~3.2]{loring_factorization_2012}. The exact conjugate-pairing
and real-parity formulation can be found in
\cite[Lem.~3.2]{han_yu_sun_jordan_2019}. For
\[
J_r:=\begin{bmatrix}0&I_r\\-I_r&0\end{bmatrix},
\]
the block form of \(B\) gives \(BJ_r=J_r\overline B\). Hence the
conjugate-linear map
\[
T(v):=J_r\overline v
\]
satisfies \(T^2=-I\) and
\[
(B-\overline\lambda I)^kT=T(B-\lambda I)^k,
\qquad k\ge1.
\]
Thus \(T\) maps the generalized eigenspace of \(\lambda\) bijectively
onto that of \(\overline\lambda\), so conjugate eigenvalues have equal
algebraic multiplicity. For real \(\lambda\), the generalized eigenspace
has even dimension $d$ in the complex field: in a basis, the restriction of \(T\) is
\(v\mapsto C\overline v\), so \(C\overline C=-I_d\) and
\(|\det C|^2=(-1)^d=1\).

Therefore every distinct value in the folded list occurs with even
multiplicity. Since each preceding constant block also has even length,
the entries in the even positions retain exactly half of every block.
Consequently,
\[
\lambda_j^+(A):=\eta_{2j}(A),
\qquad j=1,\ldots,r,
\]
is a Borel enumeration of the canonical eigenvalues.
\endgroup
\end{proof}

\phantomsection\label{proof:p01}
\begingroup
\hypersetup{linkcolor=red}
\noindent
\val{\textbf{Proof of Lemma~\ref{lem:meas-spectral-functional}}}
\par\smallskip
\endgroup

\begin{MovedProofContext}{p01}
\begin{proof}
\val{Set
\[
  F:=\PhiSymp\circ f:D\longrightarrow\CC^{2r\times 2r},
\]
which is measurable. By \cite[Lemma~2.5]{GLTblock2}, the maps
\(x\mapsto\|F(x)\|_{p}\) are measurable. Hence, by
Corollary~\ref{cor:s_p},
\[
  \|f(x)\|_{p}
  =2^{-1/p}\|F(x)\|_{p},
\]
where \(2^{-1/\infty}=1\). Therefore,
\(x\mapsto\|f(x)\|_{p}\) is measurable.}

\val{The ordered singular values of a complex matrix depend continuously on
its entries. Moreover, Proposition~\ref{prop:phi_dup_sv} gives
\[
  \sigma_{j}(f(x))
  =\sigma_{2j-1}(F(x))
  =\sigma_{2j}(F(x)),
  \qquad j=1,\ldots,r.
\]
Thus the singular value map of \(f\) is measurable.}

\val{By \cite[Lemma~2.5 and Remark~2.6]{GLTblock2},
continuous symmetric functions of the eigenvalues of the measurable complex
matrix field \(F\) are measurable and independent of the ordering of those
eigenvalues. By Theorem~\ref{thm:canonical},
the eigenvalues of \(F(x)\) occur in conjugate pairs, and the canonical
eigenvalues of \(f(x)\) are obtained by selecting Borel-measurable
representatives in the closed upper half-plane. Since \(g\) is
continuous and symmetric, its value is independent of the ordering of the
selected eigenvalues, and the stated map is measurable.}
\end{proof}
\end{MovedProofContext}

\phantomsection\label{proof:p02}
\begingroup\hypersetup{linkcolor=red}\noindent\val{\textbf{Proof of Lemma~\ref{lem:Phi-sandwich-functional}}}\par\smallskip\endgroup
\begin{MovedProofContext}{p02}
\begin{proof}
Use \(\J\alpha=\overline{\alpha}\,\J\) for all \(\alpha\in\CC_{\I}\) and the identities
\(\J e^{-\I\langle\bm m,\btheta\rangle}=e^{+\I\langle\bm m,\btheta\rangle}\J\),
\(e^{-\I\langle\bm m,\btheta\rangle}\J=\J e^{+\I\langle\bm m,\btheta\rangle}\),
to rewrite \[
  L_{S_{L}}(\btheta,\bm m)\,F(\btheta)\,R_{S_{R}}(\btheta,\bm m)
  = Z(\btheta)\,e^{-\I\langle\bm m,\btheta\rangle}
    + W(\btheta)\,e^{-\I\langle\tilde{\bm m},\btheta\rangle}\,\J.
\]
Then apply \(\PhiSymp\) and integrate entrywise.
\end{proof}
\end{MovedProofContext}

\phantomsection\label{proof:p24}
\begingroup\hypersetup{linkcolor=red}\noindent\val{\textbf{Proof of Proposition~\ref{prop:density-sandwich-functional}}}\par\smallskip\endgroup
\begin{MovedProofContext}{p24}
\begin{proof}
\val{Write \(F=Z+W\J\), where
\(Z,W\in L^{p}(\TT^{d};\CC_{\I}^{m\times n})\). By the density of complex
matrix-valued trigonometric polynomials, choose sequences
\[
P_N(\btheta)=\sum_{\bm m\in\mathcal F_N}Z_{\bm m,N}
e^{-\I\langle\bm m,\btheta\rangle},
\qquad
Q_N(\btheta)=\sum_{\bm m\in\mathcal F_N}W_{\bm m,N}
e^{-\I\langle\bm m,\btheta\rangle},
\]
converging to \(Z\) and \(W\), respectively, in \(L^p\). We may take the
finite support \(\mathcal F_N\) invariant under the involution
\(\bm m\mapsto\widetilde{\bm m}\) from
Lemma~\ref{lem:Phi-sandwich-functional}, setting missing coefficients to
zero. Define
\[
A_{\bm m,N}:=Z_{\bm m,N}+W_{\widetilde{\bm m},N}\J.
\]
Using \(\J e^{-\I b}=e^{\I b}\J\), we obtain
\[
\sum_{\bm m\in\mathcal F_N}
L_{S_L}(\btheta,\bm m)A_{\bm m,N}R_{S_R}(\btheta,\bm m)
=P_N(\btheta)+Q_N(\btheta)\J.
\]
Thus each pair of complex trigonometric approximants recombines as a
sandwich trigonometric polynomial. The slice decomposition is a continuous
real-linear isomorphism, so convergence of \(P_N\) and \(Q_N\) gives convergence
to \(F\) in \(L^p(\TT^d;\HH^{m\times n})\).}
\end{proof}
\end{MovedProofContext}

\phantomsection\label{proof:p03}
\begingroup\hypersetup{linkcolor=red}\noindent\val{\textbf{Proof of Proposition~\ref{prop:reduction-right}}}\par\smallskip\endgroup
\begin{MovedProofContext}{p03}
\begin{proof}
Fix $\bm m\in\ZZ^{d}$ and write $A_{\bm m}=Z_{\bm m}+W_{\bm m}\J$ with
$Z_{\bm m},W_{\bm m}\in\CC_{\I}^{m\times n}$. Set
$a=\langle\bm m,\btheta\rangle_{S_{L}}$ and $b=\langle\bm m,\btheta\rangle_{S_{R}}$.
Since $e^{-\I a},e^{-\I b}\in\CC_{\I}$ commute with $Z_{\bm m},W_{\bm m}$ and
$\J\alpha=\overline{\alpha}\,\J$ for all $\alpha\in\CC_{\I}$, we compute
\[
L_{S_{L}}A_{\bm m}R_{S_{R}}
=e^{-\I a}(Z_{\bm m}+W_{\bm m}\J)e^{-\I b}
=Z_{\bm m}e^{-\I(a+b)}+W_{\bm m}\,(e^{-\I a}\J)e^{-\I b}
=Z_{\bm m}e^{-\I(a+b)}+W_{\bm m}\,e^{-\I(a-b)}\J.
\]
Because $a+b=\langle\bm m,\btheta\rangle$ and $a-b=\langle\tilde{\bm m},\btheta\rangle$, this yields \eqref{eq:poly-reduction}.

Now let $F=Z+W\J\in L^{1}(\TT^{d};\HH^{m\times n})$ with $Z,W:\TT^{d}\to\CC_{\I}^{m\times n}$.
Since the decomposition $F\mapsto(Z,W)$ is given by a bounded linear map
$\HH^{m\times n}\to\CC_{\I}^{m\times n}\times\CC_{\I}^{m\times n}$, we infer $Z,W\in L^{1}$.
Applying the previous algebra pointwise and integrating entrywise,
\[
\widehat{F}^{(S_{L},S_{R})}(\bm m)
=\frac{1}{(2\pi)^{d}}\!\int_{\TT^{d}}
\bigl(Z(\btheta)e^{-\I\langle\bm m,\btheta\rangle}
+W(\btheta)e^{-\I\langle\tilde{\bm m},\btheta\rangle}\J\bigr)\,d\btheta
=\widehat{Z}(\bm m)+\widehat{W}(\tilde{\bm m})\,\J,
\]
which is \eqref{eq:coeff-reduction}.

For \eqref{eq:left-right}, by definition
$\widehat{F}_{\mathrm{L}}(\bm m)=\frac{1}{(2\pi)^{d}}\!\int_{\TT^{d}}e^{-\I\langle\bm m,\btheta\rangle}F(\btheta)\,d\btheta$.
Taking quaternionic conjugates, using $\overline{AB}=\overline{B}\,\overline{A}$ and
$\overline{e^{-\I t}}=e^{\I t}$,
\[
\overline{\widehat{F}_{\mathrm{L}}(\bm m)}
=\frac{1}{(2\pi)^{d}}\!\int_{\TT^{d}}\overline{F(\btheta)}\,e^{\I\langle\bm m,\btheta\rangle}\,d\btheta
=\frac{1}{(2\pi)^{d}}\!\int_{\TT^{d}}\overline{F(\btheta)}\,e^{-\I\langle-\bm m,\btheta\rangle}\,d\btheta
=\widehat{\,\overline{F}\,}_{\mathrm{R}}(-\bm m),
\]
and conjugating both sides gives the claim.
\end{proof}
\end{MovedProofContext}

\phantomsection\label{proof:p04}
\begingroup\hypersetup{linkcolor=red}\noindent\val{\textbf{Proof of Lemma~\ref{lem:norm-rank-phi}}}\par\smallskip\endgroup
\begin{MovedProofContext}{p04}
\begin{proof}
The singular values of \(\PhiSymp(X)\) are those of \(X\), each duplicated (Proposition~\ref{prop:phi_dup_sv}); the equalities follow, using also Corollary~\ref{cor:s_p} and \(\|\cdot\|=\|\cdot\|_{\infty}\).
\end{proof}
\end{MovedProofContext}
\subsection*{\val{Approximating classes of sequences}}

\phantomsection\label{proof:p05}
\begingroup\hypersetup{linkcolor=red}\noindent\val{\textbf{Proof of Theorem~\ref{thm:rect-embedding}}}\par\smallskip\endgroup
\begin{MovedProofContext}{p05}
\begin{proof}
We use Lemma~\ref{lem:norm-rank-phi}.

\medskip
\noindent\emph{(a)\(\Rightarrow\)(b).}
Fix \(m\) and \(n\ge n_m\).
From \eqref{eq:rect-acs-decomp},
\(A_{n}=B_{n,m}+R_{n,m}+N_{n,m}\) with
\(\operatorname{rank}_{\HH}(R_{n,m})/r_n\le c(m)\) and \(\|N_{n,m}\|\le\omega(m)\).
Set
\[
C_{n,m}:=\PhiSymp(B_{n,m}),\quad
S_{n,m}:=\PhiSymp(R_{n,m}),\quad
T_{n,m}:=\PhiSymp(N_{n,m}).
\]
Then
\[
\begin{aligned}
\PhiSymp(A_n)&=C_{n,m}+S_{n,m}+T_{n,m},\\
\frac{\operatorname{rank}_{\CC}(S_{n,m})}{2r_n}
&=\frac{2\,\operatorname{rank}_{\HH}(R_{n,m})}{2r_n}
\le c(m),\\
\|T_{n,m}\|&=\|N_{n,m}\|\le\omega(m).
\end{aligned}
\]
so \(\{C_{n,m}\}_{n,m}\) is a complex rectangular a.c.s.\ with the same \(n_m\), \(c(m)\), and \(\omega(m)\).

\medskip
\noindent\emph{(b)\(\Rightarrow\)(a).}
\val{Set
\[
\mathsf J:=\begin{bmatrix}0&1\\[-1pt]-1&0\end{bmatrix},
\qquad
\mathsf J_k:=I_k\otimes\mathsf J.
\]
For each \(n\), define
\[
\begin{aligned}
\tau_{d_n,e_n}
&\colon \CC^{2d_n\times 2e_n}
\longrightarrow \CC^{2d_n\times 2e_n},
&\qquad
\tau_{d_n,e_n}(X)
&:=-\mathsf J_{d_n}\,\overline X\,\mathsf J_{e_n},
\\
\Pi_{d_n,e_n}
&\colon \CC^{2d_n\times 2e_n}
\longrightarrow \CC^{2d_n\times 2e_n},
&\qquad
\Pi_{d_n,e_n}(X)
&:=\tfrac12\bigl(X+\tau_{d_n,e_n}(X)\bigr).
\end{aligned}
\]
The map \(\tau_{d_n,e_n}\) is conjugate-linear, whereas
\(\Pi_{d_n,e_n}\) is real-linear. Since \(\mathsf J_k\) is real
unitary and \(\mathsf J_k^2=-I_{2k}\),
\[
\tau_{d_n,e_n}^{2}=\mathrm{id},\qquad
\|\tau_{d_n,e_n}(X)\|=\|X\|,\qquad
\operatorname{rank}_{\CC}\!\bigl(\tau_{d_n,e_n}(X)\bigr)
=\operatorname{rank}_{\CC}(X).
\]
To identify the fixed-point space, write \(X=(X_{ij})_{i,j}\) as a
\(d_n\times e_n\) block matrix with \(X_{ij}\in\CC^{2\times2}\). If
\[
X_{ij}=\begin{bmatrix}z&w\\ u&v\end{bmatrix},
\]
then
\[
-\mathsf J\,\overline{X_{ij}}\,\mathsf J
=
\begin{bmatrix}
\overline v&-\overline u\\
-\overline w&\overline z
\end{bmatrix}.
\]
Hence \(\tau_{d_n,e_n}(X)=X\) if and only if every block has the form
\[
X_{ij}
=
\begin{bmatrix}
z&w\\
-\overline w&\overline z
\end{bmatrix}
=
\PhiSymp(z+w\J).
\]
Therefore
\[
\mathrm{Fix}(\tau_{d_n,e_n})
=
\PhiSymp\!\left(\HH^{d_n\times e_n}\right).
\]
Since \(\tau_{d_n,e_n}^{2}=\mathrm{id}\), the map
\(\Pi_{d_n,e_n}\) is the real-linear projector onto this fixed-point
space, and
\[
\|\Pi_{d_n,e_n}(X)\|
\le
\tfrac12\bigl(\|X\|+\|\tau_{d_n,e_n}(X)\|\bigr)
=
\|X\|.
\]}
\val{For \(n\ge n_m\), assumption \textup{(b)} gives
\begin{equation}\label{eq:phi-acs}
\PhiSymp(A_n)
=
\PhiSymp(B_{n,m})+S_{n,m}+T_{n,m},\qquad
\frac{\operatorname{rank}_{\CC}(S_{n,m})}{2r_n}\le c(m),\ \
\|T_{n,m}\|\le\omega(m).
\end{equation}}
\val{Since \(\PhiSymp(B_{n,m})\in\mathrm{Ran}(\PhiSymp)\), projecting
\eqref{eq:phi-acs} onto \(\mathrm{Ran}(\PhiSymp)\) gives
\[
\PhiSymp(A_n)
=
\PhiSymp(B_{n,m})
+
\Pi_{d_n,e_n}(S_{n,m})
+
\Pi_{d_n,e_n}(T_{n,m}).
\]
}
with
\[
\begin{aligned}
\|\Pi_{d_n,e_n}(T_{n,m})\|&\le\omega(m),\\
\operatorname{rank}_{\CC}\!\bigl(\Pi_{d_n,e_n}(S_{n,m})\bigr)
&\le \operatorname{rank}_{\CC}(S_{n,m})
+\operatorname{rank}_{\CC}\!\bigl(\tau_{d_n,e_n}(S_{n,m})\bigr)\\
&\le 2\,\operatorname{rank}_{\CC}(S_{n,m}).
\end{aligned}
\]
\val{
Since the projected error terms lie in \(\mathrm{Ran}(\PhiSymp)\), set
\[
R_{n,m}
:=
\PhiSymp^{-1}\!\bigl(\Pi_{d_n,e_n}(S_{n,m})\bigr),
\qquad
N_{n,m}
:=
\PhiSymp^{-1}\!\bigl(\Pi_{d_n,e_n}(T_{n,m})\bigr).
\]
}
Then \(A_n=B_{n,m}+R_{n,m}+N_{n,m}\) and
\[
\frac{\operatorname{rank}_{\HH}(R_{n,m})}{r_n}
=\frac{\operatorname{rank}_{\CC}(\Pi_{d_n,e_n}(S_{n,m}))}{2r_n}\le 2\,c(m),
\qquad
\|N_{n,m}\|=\|\Pi_{d_n,e_n}(T_{n,m})\|\le\omega(m).
\]
Thus \(\{\{B_{n,m}\}_{n}: \ m\}\) is a quaternionic a.c.s.\ with the same \(n_m\), the same \(\omega(m)\), and rank control at most \(2c(m)\).
\end{proof}
\end{MovedProofContext}

\phantomsection\label{proof:p25}
\begingroup
\hypersetup{linkcolor=red}
\noindent
\val{\textbf{Proof of the rectangular product axioms
\(\mathrm{ACS}3\) and \(\mathrm{ACS}_{\HH}3\).}}
\par\smallskip
\endgroup

\begin{MovedProofContext}{p25}
\begin{proof}
\begingroup
\color{red}
Let \(\mathbb F\in\{\CC,\HH\}\), and put
\[
r_n:=d_n\wedge e_n,\qquad
s_n:=e_n\wedge f_n,\qquad
t_n:=d_n\wedge f_n.
\]
By \(\operatorname{PC}(d,e,f)\), there is \(\Gamma>0\) such that
\[
r_n+s_n\le \Gamma t_n
\]
for all sufficiently large \(n\).

After replacing the controls of the two a.c.s.\ decompositions by
common majorants, denote them by \(c(m),\omega(m)\to0\). Likewise, let
\(\rho(M)\to0\) dominate the sparse-unboundedness tails of the two
target sequences. The proof of
\cite[Prop.~5.5, pp.~87-88]{garoni_Toeplitz_2017}, based on the
singular-value truncation in
\cite[Prop.~5.3]{garoni_Toeplitz_2017}, uses only SVD truncation and
the elementary rank and norm inequalities. The argument therefore extends
to composable rectangular matrices over \(\mathbb F\) (using
Theorem~\ref{thm:q_svd} when \(\mathbb F=\HH\)) and only the rank
normalization changes. Indeed, for every \(M>0\), we obtain
\[
A_nC_n-B_{n,m}D_{n,m}=E_{n,m}+F_{n,m},
\]
with
\[
\operatorname{rank}_{\mathbb F}(E_{n,m})
\le
(2c(m)+\rho(M))r_n+(c(m)+\rho(M))s_n,
\qquad
\|F_{n,m}\|\le 2M\omega(m)+\omega(m)^2.
\]
Therefore
\[
\frac{\operatorname{rank}_{\mathbb F}(E_{n,m})}{t_n}
\le
\Gamma\bigl(2c(m)+\rho(M)\bigr).
\]
Choosing
\[
M_m:=\bigl(\omega(m)+m^{-2}\bigr)^{-1/2}
\]
gives \(M_m\to\infty\) and \(M_m\omega(m)\to0\). Hence, we deduce
\[
\{B_{n,m}D_{n,m}\}_n
\xrightarrow{\mathrm{a.c.s.}}
\{A_nC_n\}_n.
\]
Taking \(C_n=A'_n\), \(D_{n,m}=B'_{n,m}\), and \(f_n=e'_n\) yields
the two-approximant product assertion.

For fixed right multiplication, write
\[
A_n-B_{n,m}=R_{n,m}+N_{n,m},
\qquad
C_n=\widehat C_{n,M}+\widetilde C_{n,M},
\]
where
\[
\operatorname{rank}_{\mathbb F}(R_{n,m})\le c(m)r_n,
\quad
\|N_{n,m}\|\le\omega(m),
\quad
\operatorname{rank}_{\mathbb F}(\widehat C_{n,M})\le\rho(M)s_n,
\quad
\|\widetilde C_{n,M}\|\le M.
\]
Then
\[
A_nC_n-B_{n,m}C_n
=
\underbrace{R_{n,m}C_n+N_{n,m}\widehat C_{n,M}}_{E_{n,m}}
+
\underbrace{N_{n,m}\widetilde C_{n,M}}_{F_{n,m}},
\]
and consequently
\[
\frac{\operatorname{rank}_{\mathbb F}(E_{n,m})}{t_n}
\le
\Gamma\bigl(c(m)+\rho(M)\bigr),
\qquad
\|F_{n,m}\|\le M\omega(m).
\]
Taking \(M=M_m\) proves the fixed-multiplier assertion. Finally,
\(\mathbb F=\CC\) gives \(\mathrm{ACS}3\), while \(\mathbb F=\HH\)
gives \(\mathrm{ACS}_{\HH}3\).
\endgroup
\end{proof}
\end{MovedProofContext}

\section[Proofs of Toeplitz and circulant results]{\val{Proofs of Toeplitz and circulant results}}\label{app:toeplitz-proofs}
\noindent\val{The proofs of the Toeplitz, distribution, localization, QDFT,
and circulant results appear here in the order of their statements in the main
text.}
\subsection*{\val{Toeplitz structural identities}}

\phantomsection\label{proof:p06}
\begingroup\hypersetup{linkcolor=red}\noindent\val{\textbf{Proof of Proposition~\ref{prop:Toeplitz-right-reduction}}}\par\smallskip\endgroup
\begin{MovedProofContext}{p06}
\begin{proof}
Fix $\bm k\in\mathbb{Z}^{d}$. By equation \eqref{eq:coeff-reduction}, we write
\[
\widehat F^{(S_{L},S_{R})}(\bm k)=\widehat Z(\bm k)+\widehat W(\tilde{\bm k})\,\J,
\]
where $\tilde k_{j}=k_{j}$ for $j\in S_{L}$ and $\tilde k_{j}=-k_{j}$ for $j\in S_{R}$.
Given $H=Z+(W\circ r_{S_{L}})\J$, applying \eqref{eq:coeff-reduction} with the right kernel
($S_{L}=\varnothing,S_{R}=[d]$) yields
\[
\widehat H^{(R)}(\bm k)=\widehat Z(\bm k)+\widehat{\,W\circ r_{S_{L}}\,}(-\bm k)\,\J,
\]
and since $\langle-\bm k,r_{S_{L}}(\btheta)\rangle=\langle\tilde{\bm k},\btheta\rangle$ and
Lebesgue measure on $\TT^{d}$ is invariant under $r_{S_{L}}$,
\[
\widehat{\,W\circ r_{S_{L}}\,}(-\bm k)=\widehat W(\tilde{\bm k}).
\]
Thus $\widehat H^{(R)}(\bm k)=\widehat F^{(S_{L},S_{R})}(\bm k)$ for all $\bm k$, which implies
\[
\bigl[T_{\bn}^{(S_{L},S_{R})}(F)\bigr]_{\alpha,\beta}
=\widehat F^{(S_{L},S_{R})}(\alpha-\beta)
=\widehat H^{(R)}(\alpha-\beta)
=\bigl[T_{\bn}^{(R)}(H)\bigr]_{\alpha,\beta},
\]
proving \eqref{eq:Toeplitz-reduction}. Taking $S_{L}=[d]$ gives \eqref{eq:left-right-reduction}.

For the fixed-size criterion, $T_{\bn}^{(L)}(F)=T_{\bn}^{(R)}(F)$ iff
$\widehat F^{(L)}(\bm k)=\widehat F^{(R)}(\bm k)$ for all difference indices
$\bm k=\alpha-\beta$ that occur in $T_{\bn}$, i.e., for all $\bm k\in\Delta(\bn)$; namely
\[
\widehat Z(\bm k)+\widehat W(\bm k)\J=\widehat Z(\bm k)+\widehat W(-\bm k)\J
\quad\text{for all } \bm k\in\Delta(\bn),
\]
which is equivalent to \eqref{eq:evenness-criterion-box}. \val{The equality
$T_{\bn}^{(L)}(F)=T_{\bn}^{(R)}(F)$ for every $\bn\in\NN^d$ holds if and only if
the above identity holds for all $\bm k\in\ZZ^d$,}
yielding \eqref{eq:evenness-criterion-global}, which is equivalent to $W(\btheta)=W(-\btheta)$ a.e.
\end{proof}
\end{MovedProofContext}

\phantomsection\label{proof:p07}
\begingroup\hypersetup{linkcolor=red}\noindent\val{\textbf{Proof of Lemma~\ref{lem:Fourier-linearity}}}\par\smallskip\endgroup
\begin{MovedProofContext}{p07}
\begin{proof}
Since the kernels
$e^{-\I\langle\bm k,\boldsymbol\theta\rangle_{S_{L}}}$ and
$e^{-\I\langle\bm k,\boldsymbol\theta\rangle_{S_{R}}}$ take values in $\CC_{\I}$,
they commute with $a,b$.
Using only associativity in $\HH$, commutation with $\CC_{\I}$-valued kernels, and the
$\mathbb{R}$-linearity of the Lebesgue integral, we get
\[
\begin{aligned}
\widehat{\,aF+bG\,}^{(S_{L},S_{R})}(\bm k)
&=(2\pi)^{-d}\!\int
e^{-\I\langle\bm k,\boldsymbol\theta\rangle_{S_{L}}}
\,(aF+bG)\,
e^{-\I\langle\bm k,\boldsymbol\theta\rangle_{S_{R}}} \, d\boldsymbol\theta \\
&=(2\pi)^{-d}\!\int
\bigl(a\,e^{-\I\langle\bm k,\boldsymbol\theta\rangle_{S_{L}}}F e^{-\I\langle\bm k,\boldsymbol\theta\rangle_{S_{R}}}
+ b\,e^{-\I\langle\bm k,\boldsymbol\theta\rangle_{S_{L}}}G e^{-\I\langle\bm k,\boldsymbol\theta\rangle_{S_{R}}}\bigr)
\, d\boldsymbol\theta \\
&= a\,\widehat{F}^{(S_{L},S_{R})}(\bm k)
+ b\,\widehat{G}^{(S_{L},S_{R})}(\bm k),
\end{aligned}
\]
which proves \eqref{eq:Fourier-CI-linear}.
\end{proof}
\end{MovedProofContext}

\phantomsection\label{proof:p08}
\begingroup\hypersetup{linkcolor=red}\noindent\val{\textbf{Proof of Proposition~\ref{prop:Toeplitz-linear}}}\par\smallskip\endgroup
\begin{MovedProofContext}{p08}
\begin{proof}
By definition, each block entry of $T_{\bn}^{(S_{L},S_{R})}(H)$ equals
$\widehat{H}^{(S_{L},S_{R})}(\bm k)$ with $\bm k=\alpha-\beta$.
Applying Lemma \ref{lem:Fourier-linearity} entrywise with
$H=aF+bG$ gives \eqref{eq:Toeplitz-CI-linear}.
\end{proof}
\end{MovedProofContext}

\phantomsection\label{proof:p09}
\begingroup\hypersetup{linkcolor=red}\noindent\val{\textbf{Proof of Theorem~\ref{thm:AdjointFormal-mv}}}\par\smallskip\endgroup
\begin{MovedProofContext}{p09}
\begin{proof}
By the $*$-closure of integrals,
\[
\bigl(\widehat F^{(L)}(-\bm k)\bigr)^{*}
=\frac{1}{(2\pi)^{d}}\!\int_{\TT^{d}}\bigl(F(\btheta)\bigr)^{*}\,e^{-\I\langle \bm k,\btheta \rangle}\,d\btheta
=\widehat{(F^{*})}^{(R)}(\bm k),
\]
and similarly
\[
\bigl(\widehat F^{(R)}(-\bm k)\bigr)^{*}
=
\widehat{(F^{*})}^{(L)}(\bm k).
\]
\val{For the two-sided case,}
\[
\val{
\bigl(\widehat F^{(S_L,S_R)}(-\bm k)\bigr)^{*}
=
\widehat{(F^{*})}^{(S_R,S_L)}(\bm k).}
\]
Hence, for $\alpha,\beta\in\Lambda_{\bn}$,
\[
\bigl(T_{\boldsymbol{n}}^{(\tau)}(F)\bigr)^{*}_{\alpha\beta}
=
\bigl(\widehat F^{(\tau)}(\beta-\alpha)\bigr)^{*}
=
\widehat{(F^{*})}^{(\tau')}(\alpha-\beta)
=
\bigl(T_{\bn}^{(\tau')}(F^{*})\bigr)_{\alpha\beta},
\]
where
\[
\val{
(\tau,\tau')
\in
\left\{
(L,R),\,
(R,L),\,
\bigl((S_L,S_R),(S_R,S_L)\bigr)
\right\}.}
\]
This proves the claim.
\qedhere
\end{proof}
\end{MovedProofContext}

\phantomsection\label{proof:p10}
\begingroup\hypersetup{linkcolor=red}\noindent\val{\textbf{Proof of Theorem~\ref{thm:selfadjoint-single-axis-mv}}}\par\smallskip\endgroup
\begin{MovedProofContext}{p10}
\begin{proof}
For a fixed $\bn$, stating that $T_{\bn}^{(S_L,S_R)}(F)$ is Hermitian is equivalent to
\begin{equation}\label{eq:HermCrit-general}
  \widehat F^{(S_L,S_R)}(\bm k)=\bigl(\widehat F^{(S_L,S_R)}(-\bm k)\bigr)^{*}
  \qquad\text{for all }\bm k\in\Delta(\bn),
\end{equation}
where $\Delta(\bn)=\{\alpha-\beta:\ \alpha,\beta\in\Lambda_{\bn}\}$.
Since the statement holds for all $\bn$, \eqref{eq:HermCrit-general} is equivalent to the same identity
for all $\bm k\in\ZZ^{d}$.

\val{Let $\bm k_L$ and $\bm k_R$ be the coordinate projections defined before
\eqref{eq:mv-Sandwich-coeffs}.} By \eqref{eq:mv-Sandwich-coeffs},
\[
\widehat F^{(S_L,S_R)}(\bm k)
=\widehat Z(\bm k_L+\bm k_R)+\widehat W(\bm k_L-\bm k_R)\,\J.
\]
Set
\[
\bm p:=\bm k_L+\bm k_R=\bm k,
\qquad
\bm q:=\bm k_L-\bm k_R.
\]
\val{The maps
\(\bm k\mapsto\bm p\) and
\(\bm k\mapsto\bm q\)
are bijections of \(\ZZ^d\), where the latter only changes coordinate signs.
For each fixed \(\bm k\), equation~\eqref{eq:HermCrit-general} becomes}
\[
\widehat Z(\bm p)+\widehat W(\bm q)\,\J
=\Bigl(\widehat Z(-\bm p)+\widehat W(-\bm q)\,\J\Bigr)^{*}
=\widehat Z(-\bm p)^{*}+(\widehat W(-\bm q)\,\J)^{*}.
\]
Using $\J^{*}=-\J$ and $\J A=\overline{A}\,\J$ for $A\in\CC_{\I}^{\,s\times s}$, together with
$\overline{A^{*}}=A^{\mathsf T}$, we obtain
\[
(\widehat W(-\bm q)\,\J)^{*}
=\J^{*}\widehat W(-\bm q)^{*}
=-\J\,\widehat W(-\bm q)^{*}
=-\,\overline{\widehat W(-\bm q)^{*}}\,\J
=-\,\bigl(\widehat W(-\bm q)\bigr)^{\mathsf T}\,\J.
\]
Hence
\[
\widehat Z(\bm p)+\widehat W(\bm q)\,\J
=\widehat Z(-\bm p)^{*}-\bigl(\widehat W(-\bm q)\bigr)^{\mathsf T}\,\J.
\]
Since \(1\) and \(\J\) are linearly independent over the slice
\(\CC_{\I}\), \val{and since both maps above are bijective,}
this is equivalent to
\begin{equation}\label{eq:Fourier-conditions}
\widehat Z(\bm p)=\widehat Z(-\bm p)^{*}
\qquad\text{and}\qquad
\widehat W(\bm q)=-\,\bigl(\widehat W(-\bm q)\bigr)^{\mathsf T}
\quad\text{for all }\bm p,\bm q\in\ZZ^{d}.
\end{equation}
By standard properties of complex Fourier series, \eqref{eq:Fourier-conditions} is equivalent to
$Z(\btheta)^{*}=Z(\btheta)$ and $W(-\btheta)=-\,W(\btheta)^{\mathsf T}$ a.e.\ on $\TT^{d}$,
which proves (i)$\Rightarrow$(ii). Conversely, if (ii) holds, then \eqref{eq:Fourier-conditions} holds
and thus \eqref{eq:HermCrit-general} follows for every $\bn$, so
$T_{\bn}^{(S_L,S_R)}(F)$ is Hermitian for all $\bn$.

\val{Because both maps above are bijections for every ordered partition,
the conclusion does not depend on the partition.}
\end{proof}
\end{MovedProofContext}
\subsection*{\val{Embeddings and distribution theorems}}

\phantomsection\label{proof:p11}
\begingroup\hypersetup{linkcolor=red}\noindent\val{\textbf{Proof of Theorem~\ref{thm:unified-LR-rect}}}\par\smallskip\endgroup
\begin{MovedProofContext}{p11}
\begin{proof}
Let $\bm k=\alpha-\beta\in\ZZ^{d}$. Using \eqref{eq:Phi-cov-used} with the left multiplier
$\lambda(\btheta)=e^{-\I\langle\bm k,\btheta\rangle}I_{s}$ for the left case (respectively the right multiplier $\rho(\btheta)=e^{-\I\langle\bm k,\btheta\rangle}I_{t}$ for the right case), we obtain
\[
\PhiSymp\!\bigl(\widehat F^{(\mathrm L)}(\bm k)\bigr)
=\frac{1}{(2\pi)^{d}}\!\int_{\TT^{d}}\!
\operatorname{diag}\!\bigl(e^{-\I\langle\bm k,\btheta\rangle}I_{s},\,e^{+\I\langle\bm k,\btheta\rangle}I_{s}\bigr)\,
\PhiSymp\!\bigl(F(\btheta)\bigr)\,d\btheta,
\]
and
\[
\PhiSymp\!\bigl(\widehat F^{(\mathrm R)}(\bm k)\bigr)
=\frac{1}{(2\pi)^{d}}\!\int_{\TT^{d}}\!
\PhiSymp\!\bigl(F(\btheta)\bigr)\,
\operatorname{diag}\!\bigl(e^{-\I\langle\bm k,\btheta\rangle}I_{t},\,e^{+\I\langle\bm k,\btheta\rangle}I_{t}\bigr)\,d\btheta.
\]
Since $\PhiSymp(F)=\bigl[\begin{smallmatrix} Z & W\\ -\overline{W} & \overline{Z} \end{smallmatrix}\bigr]$, a direct multiplication of the integrands yields the phase factors
$Z\,e^{-\I\langle\bm k,\btheta\rangle}$ and $W\,e^{-\I\langle\bm k,\btheta\rangle}$ in the left case, and
$Z\,e^{-\I\langle\bm k,\btheta\rangle}$ and $W\,e^{+\I\langle\bm k,\btheta\rangle}$ in the right case; the lower row carries the corresponding conjugate phases. Therefore
\[
\PhiSymp\!\bigl(\widehat F^{(\mathrm L)}(\bm k)\bigr)=
\begin{pmatrix}
\widehat Z(\bm k) & \widehat W(\bm k)\\[2pt]
-\overline{\widehat W(\bm k)} & \overline{\widehat Z(\bm k)}
\end{pmatrix},
\qquad
\PhiSymp\!\bigl(\widehat F^{(\mathrm R)}(\bm k)\bigr)=
\begin{pmatrix}
\widehat Z(\bm k) & \widehat W(-\bm k)\\[2pt]
-\overline{\widehat W(-\bm k)} & \overline{\widehat Z(\bm k)}
\end{pmatrix}.
\]
These are precisely the $\bm k$-th Fourier coefficients of $G_{\mathrm L}$ and $G_{\mathrm R}$, respectively. Setting $\bm k=\alpha-\beta$ gives the Toeplitz identities.
\end{proof}
\end{MovedProofContext}

\phantomsection\label{proof:p12}
\begingroup\hypersetup{linkcolor=red}\noindent\val{\textbf{Proof of Theorem~\ref{thm:sandwich+perm-rect-r}}}\par\smallskip\endgroup
\begin{MovedProofContext}{p12}
\begin{proof}
Let $\bm k=\alpha-\beta\in\ZZ^{d}$. \val{Let $\bm k_L$ and $\bm k_R$ be the
coordinate projections defined before \eqref{eq:mv-Sandwich-coeffs}.}
By the definition of the sandwich Fourier coefficients,
\[
\widehat F^{(S_{L},S_{R})}(\bm k)
=\frac{1}{(2\pi)^{d}}\!\int_{\TT^{d}} e^{-\I\langle \bm k_{L},\btheta\rangle}\,(Z+W\J)\,e^{-\I\langle \bm k_{R},\btheta\rangle}\,d\btheta.
\]
\val{Applying the covariance identity \eqref{eq:Phi-cov-used} with}
\[
\lambda(\btheta)=e^{-\I\langle \bm k_{L},\btheta\rangle}I_s,
\qquad
\rho(\btheta)=e^{-\I\langle \bm k_{R},\btheta\rangle}I_t,
\]
we obtain
\[
\PhiSymp\!\bigl(\widehat F^{(S_{L},S_{R})}(\bm k)\bigr)
=\frac{1}{(2\pi)^{d}}\!\int_{\TT^{d}}
\begin{bmatrix}
e^{-\I\langle \bm k_{L},\btheta\rangle} I_{s} & 0\\[1pt] 0 & e^{+\I\langle \bm k_{L},\btheta\rangle} I_{s}
\end{bmatrix}
\begin{bmatrix}
Z & W\\[1pt] -\overline W & \overline Z
\end{bmatrix}
\begin{bmatrix}
e^{-\I\langle \bm k_{R},\btheta\rangle} I_{t} & 0\\[1pt] 0 & e^{+\I\langle \bm k_{R},\btheta\rangle} I_{t}
\end{bmatrix}
\,d\btheta.
\]
Multiplying the integrand, we find the four entries
\[
Z\,e^{-\I\langle \bm k_{L}+\bm k_{R},\btheta\rangle},\quad
W\,e^{-\I\langle \bm k_{L}-\bm k_{R},\btheta\rangle},\quad
-\overline W\,e^{+\I\langle \bm k_{L}-\bm k_{R},\btheta\rangle},\quad
\overline Z\,e^{+\I\langle \bm k_{L}+\bm k_{R},\btheta\rangle}.
\]
Hence
\[
\PhiSymp\!\bigl(\widehat F^{(S_{L},S_{R})}(\bm k)\bigr)=
\begin{pmatrix}
\widehat Z(\bm k_{L}+\bm k_{R}) & \widehat W(\bm k_{L}-\bm k_{R})\\[2pt]
-\overline{\widehat W(\bm k_{L}-\bm k_{R})} & \overline{\widehat Z(\bm k_{L}+\bm k_{R})}
\end{pmatrix}.
\]
By standard changes of variables on $\TT^{d}$, we have
\[
\widehat{H\circ(-\mathrm{id})}(\bm k)=\widehat H(-\bm k),
\qquad
\widehat{H\circ r_{S_L}}(\bm k)=\widehat H\!\bigl(r_{S_L}(\bm k)\bigr),
\]
and since $r_{S_L}(-\bm k)=(\bm k_{L},-\bm k_{R})$, we deduce
\[
\widehat{W\circ r_{S_L}\circ(-\mathrm{id})}(\bm k)
=\widehat W\!\bigl(r_{S_L}(-\bm k)\bigr)
=\widehat W(\bm k_{L}-\bm k_{R}),
\qquad
\widehat{\overline{Z(-\cdot)}}(\bm k)=\overline{\widehat Z(\bm k)}.
\]
Therefore the last displayed matrix equals $\widehat{G_{S_L,S_R}}(\bm k)$.
Setting $\bm k=\alpha-\beta$ and assembling by blocks yields
\(
\PhiSymp\!\bigl(T_{\bn}^{(S_L,S_R)}(F)\bigr)=T_{\bn}\!\bigl(G_{S_L,S_R}\bigr),
\)
which is \eqref{eq:sandwich-embed-r}.
\end{proof}
\end{MovedProofContext}

\phantomsection\label{proof:p13}
\begingroup\hypersetup{linkcolor=red}\noindent\val{\textbf{Proof of Theorem~\ref{thm:SV-qToeplitz-1axis-embed-mv}}}\par\smallskip\endgroup
\begin{MovedProofContext}{p13}
\begin{proof}
By the embedding identities established above,
\[
\PhiSymp\!\bigl(T_{\bn}^{(L)}(F)\bigr)=T_{\bn}(G_{\mathrm L}),\qquad
\PhiSymp\!\bigl(T_{\bn}^{(R)}(F)\bigr)=T_{\bn}(G_{\mathrm R}),\qquad
\PhiSymp\!\bigl(T_{\bn}^{(S_{L},S_{R})}(F)\bigr)=T_{\bn}(G_{S_{L},S_{R}}).
\]
Hence it suffices to analyze $T_{\bn}(G_{\tau})$ on the complex side.
\val{Theorem~\ref{thm:complex-toeplitz-sv} gives
$\{T_{\bn}(G_{\tau})\}_{\bn}\sim_{\sigma}G_{\tau}$.}
By Corollary~\ref{cor:s_p}, the singular values of $\PhiSymp(A)$ are those of $A$,
each duplicated, so the quaternion-side normalization matches the complex one,
giving \eqref{eq:sv-limit-mv}.

If $s=t$ and every $T_{\bn}^{(\tau)}(F)$ is Hermitian, then
$\PhiSymp\!\bigl(T_{\bn}^{(\tau)}(F)\bigr)$ is Hermitian as well. \val{Theorem~\ref{thm:complex-toeplitz-hermitian} gives
$\{T_{\bn}(G_{\tau})\}_{\bn}\sim_{\lambda}G_{\tau}$.} The embedding argument
transfers the claim back to the quaternion sequence.
\end{proof}
\end{MovedProofContext}

\phantomsection\label{proof:p14}
\begingroup\hypersetup{linkcolor=red}\noindent\val{\textbf{Proof of Theorem~\ref{thm:EV-qToeplitz-1axis-embed-mv}}}\par\smallskip\endgroup
\begin{MovedProofContext}{p14}
\begin{proof}
By the embedding identities,
$\PhiSymp\!\bigl(T_{\bn}^{(\tau)}(F)\bigr)=T_{\bn}(G_\tau)$.
Since $G_\tau\in L^\infty(\TT^d;\CC^{(2s)\times(2s)})$ and it belongs to the complex Tilli class,
\val{Theorem~\ref{thm:complex-tilli} leads to
$\{T_{\bn}(G_\tau)\}_{\bn}\sim_{\lambda}G_\tau$.}
The eigenvalue distribution for the quaternion sequence follows from the embedding.
\end{proof}
\end{MovedProofContext}
\subsection*{\val{Schatten estimates and localization}}

\phantomsection\label{proof:p15}
\begingroup\hypersetup{linkcolor=red}\noindent\val{\textbf{Proof of Theorem~\ref{thm:q-schatten-coarse-mv}}}\par\smallskip\endgroup
\begin{MovedProofContext}{p15}
{
\begin{proof}
We consider a general sandwich-kernel setting, thus consider the embedding symbol
\[
G_{S_{L},S_{R}}(\btheta)
=
\begin{pmatrix}
Z(\btheta) & W\!\bigl(r_{S_{L}}(-\btheta)\bigr)\\[2pt]
-\overline{W\!\bigl(r_{S_{L}}(\btheta)\bigr)} & \overline{Z(-\btheta)}
\end{pmatrix},
\]
where $r_{S_{L}} : \TT^{d} \to \TT^{d}$ is the reflection of left-kernel variables.
Define the row projections
\[
P_{1} := \operatorname{diag}(I_{s},0) \in \CC^{(2s)\times(2s)},
\qquad
P_{2} := \operatorname{diag}(0,I_{s}),
\]
and the respective column projections
\[
E_{1} := \operatorname{diag}(I_{t},0) \in \CC^{(2t)\times(2t)},
\qquad
E_{2} := \operatorname{diag}(0,I_{t}).
\]
Clearly, $\|P_{j}\|_{\infty} = \|E_{j}\|_{\infty} = 1$ for $j=1,2$. \\
Using the projections, we decompose $G_{S_{L},S_{R}}(\btheta)$ into the sum of four terms:
\[
\begin{aligned}\label{eq:symbdec}
G_{S_{L},S_{R}}(\btheta)
&=
P_{1}\,\PhiSymp\!\bigl(F(\btheta)\bigr)\,E_{1}
+ P_{1}\,\PhiSymp\!\bigl(F(r_{S_{L}}(-\btheta))\bigr)\,E_{2}\\
&\quad
+ P_{2}\,\PhiSymp\!\bigl(F(r_{S_{L}}(\btheta))\bigr)\,E_{1}
+ P_{2}\,\PhiSymp\!\bigl(F(-\btheta)\bigr)\,E_{2},
\end{aligned}
\]
where, \val{explicitly},
\[
P_{1}\,\PhiSymp\!\bigl(F(\btheta)\bigr)E_{1}
=
\begin{pmatrix}
Z(\btheta) & 0\\[2pt]
0 & 0
\end{pmatrix},
\quad
P_{1}\,\PhiSymp\!\bigl(F(r_{S_{L}}(-\btheta))\bigr)E_{2}
=
\begin{pmatrix}
0 & W(r_{S_{L}}(-\btheta))\\[2pt]
0 & 0
\end{pmatrix},
\]
\[
P_{2}\,\PhiSymp\!\bigl(F(r_{S_{L}}(\btheta))\bigr)E_{1}
=
\begin{pmatrix}
0 & 0\\[2pt]
-\overline{W(r_{S_{L}}(\btheta))} & 0
\end{pmatrix},
\quad
P_{2}\,\PhiSymp\!\bigl(F(-\btheta)\bigr)E_{2}
=
\begin{pmatrix}
0 & 0\\[2pt]
0 & \overline{Z(-\btheta)}
\end{pmatrix}.
\]
Now for any choice of possible contractions, we have
\[
\|A X B\|_{p} \le \|A\|\,\|X\|_{p}\,\|B\|\le \|X\|_{p}.
\]
Hence, by using \eqref{eq:symbdec}, for any $\btheta$ we deduce
\[
\begin{aligned}
\|G_{S_L,S_R}(\btheta)\|_{p}
&\le \sum_{j=1}^{4}
\bigl\|\PhiSymp\!\bigl(F(\varphi_{j}(\btheta))\bigr)\bigr\|_{p},
\end{aligned}
\]
where $\varphi_j(\btheta)$ is either $\btheta \mapsto \btheta$, $\btheta \mapsto -\btheta$, or $\btheta \mapsto r_{S_L}(\btheta)$. Note that all of these functions \val{are measure-preserving involutions of $\TT^d$}. Hence, passing to the $L^p$ norm, we have
\[
\bigl\|\PhiSymp \circ F \circ \varphi_{j}\bigr\|_{L^{p}}
=
\|\PhiSymp \circ F\|_{L^{p}}.
\]
Summing all the terms we obtain
\[
\|G_{\tau}\|_{L^{p}(\TT^{d})}
\le 4\,\|\PhiSymp \circ F\|_{L^{p}(\TT^{d})}
= 4\cdot 2^{1/p}\,\|F\|_{L^{p}(\TT^{d};\HH^{s\times t})},
\]
by the symplectic $L^{p}$ isometry. \\
Furthermore, we consider the corresponding Toeplitz matrix $T_{\bn}^{(S_L,S_R)}(F)$. By assumption, we have
\[
\PhiSymp\!\bigl(T_{\bn}^{(S_L,S_R)}(F)\bigr)
= T_{\bn}(G_{\tau}).
\]
By the Schatten scaling under $\PhiSymp$,
\[
\bigl\|T_{\bn}^{(\tau)}(F)\bigr\|_{p}
= 2^{-1/p} \bigl\|T_{\bn}(G_{\tau})\bigr\|_{p}.
\]
Further, applying the complex Toeplitz bound (Lemma~\ref{lem:Tilli-Sp-mv}) and the
$L^{p}$ bound for $G_{\tau}$, we obtain
\[
\begin{aligned}
\bigl\|T_{\bn}^{(\tau)}(F)\bigr\|_{p}
&\le 2^{-1/p}
\left(\frac{N_{\bn}}{(2\pi)^{d}}\right)^{1/p}
\|G_{\tau}\|_{L^{p}(\TT^{d})}\\[2pt]
&\le 2^{-1/p}
\left(\frac{N_{\bn}}{(2\pi)^{d}}\right)^{1/p}
\cdot 4\cdot 2^{1/p}\,\|F\|_{L^{p}(\TT^{d};\HH^{s\times t})}\\[2pt]
&= 4\left(\frac{N_{\bn}}{(2\pi)^{d}}\right)^{1/p}
\|F\|_{L^{p}(\TT^{d};\HH^{s\times t})},
\end{aligned}
\]
which is exactly the claim we want to prove.
\end{proof}
}
\end{MovedProofContext}

\phantomsection\label{proof:p16}
\begingroup\hypersetup{linkcolor=red}\noindent\val{\textbf{Proof of Theorem~\ref{thm:embed_bounds-mv}}}\par\smallskip\endgroup
\begin{MovedProofContext}{p16}
\begin{proof}
By the embedding identities, we find
\[
\PhiSymp\!\bigl(T_{\bn}^{(\tau)}(F)\bigr)=T_{\bn}(G_{\tau}).
\]
Since $T_{\bn}^{(\tau)}(F)$ is Hermitian, $\PhiSymp\!\bigl(T_{\bn}^{(\tau)}(F)\bigr)$
is Hermitian and hence $T_{\bn}(G_{\tau})$ is Hermitian as well.
\val{Because this holds for every $\bn\in\NN^d$, every $\bm k\in\ZZ^d$ occurs as
a Toeplitz difference index for some $\bn$, and therefore
$\widehat G_{\tau}(\bm k)=\widehat G_{\tau}(-\bm k)^*$ for every $\bm k$.
Uniqueness of Fourier coefficients gives
$G_{\tau}(\btheta)=G_{\tau}(\btheta)^*$ for a.e. $\btheta\in\TT^d$. Hence
Theorem~\ref{thm:complex-toeplitz-localization} gives}
\[
\sigma\!\bigl(T_{\bn}(G_{\tau})\bigr)\ \subset\
[\,m_{\tau},\,M_{\tau}\,].
\]
For Hermitian matrices, $\PhiSymp$ preserves eigenvalues (it duplicates them in the
complex representation but with the same values); thus \eqref{eq:hermitian_loc} follows.
\end{proof}
\end{MovedProofContext}

\phantomsection\label{proof:p17}
\begingroup\hypersetup{linkcolor=red}\noindent\val{\textbf{Proof of Corollary~\ref{cor:HPD}}}\par\smallskip\endgroup
\begin{MovedProofContext}{p17}
\begin{proof}
(a) Apply Theorem~\ref{thm:embed_bounds-mv}. \val{For (b), the Hermitian matrices
$T_{\bn}(G_{\tau})=\PhiSymp\!\bigl(T_{\bn}^{(\tau)}(F)\bigr)$ have all their real
eigenvalues at least $c$. The eigenvalue distribution in
Theorem~\ref{thm:complex-toeplitz-hermitian} therefore implies
$\lambda_{\min}\!\bigl(G_{\tau}(\boldsymbol\theta)\bigr)\ge c$ in measure; hence
$m_{\tau}\ge c$.}
\end{proof}
\end{MovedProofContext}
\subsection*{\val{Circulants and a.c.s. approximation}}

\phantomsection\label{proof:p18}
\begingroup\hypersetup{linkcolor=red}\noindent\val{\textbf{Proof of Proposition~\ref{prop:canon-X}}}\par\smallskip\endgroup
\begin{MovedProofContext}{p18}
\begin{proof}
Write each coefficient of $p_{\mathrm R}$ in Cartesian form
\[
p_{\mathrm R}(\brho)=p_{\mathrm R}(\brho)^{(0)}+p_{\mathrm R}(\brho)^{(1)}\I+p_{\mathrm R}(\brho)^{(2)}\J+p_{\mathrm R}(\brho)^{(3)}(\I\J),
\]
with real blocks $p_{\mathrm R}(\brho)^{(\ell)}\in\RR^{s\times t}$. Expanding the shift representation,
\[
\begin{aligned}
C_{\bm n}\bigl(p_{\mathrm{R}}\bigr)
&=\sum_{\brho\in \mathcal F}\bigl(P_{\bm n}^{(\brho)}\otimes p_{\mathrm R}(\brho)^{(0)}\bigr)
+\Bigl(\sum_{\brho\in \mathcal F}P_{\bm n}^{(\brho)}\otimes p_{\mathrm R}(\brho)^{(1)}\Bigr)\,\I \\
&\qquad+\Bigl(\sum_{\brho\in \mathcal F}P_{\bm n}^{(\brho)}\otimes p_{\mathrm R}(\brho)^{(2)}\Bigr)\,\J
+\Bigl(\sum_{\brho\in \mathcal F}P_{\bm n}^{(\brho)}\otimes p_{\mathrm R}(\brho)^{(3)}\Bigr)\,(\I\J).
\end{aligned}
\]
For $\ell=0,1$, \val{Proposition~\ref{prop:complex-bcirc-diagonalization}} applied componentwise gives
\[
U_{L}\Bigl(\sum_{\brho}P_{\bm n}^{(\brho)}\otimes p_{\mathrm R}(\brho)^{(\ell)}\Bigr)U_{R}^{*}
=\Bdiag\bigl(\widehat S_{\ell}(\bk)\bigr)_{\bk\in\Lambda_{\bm n}}=:\Lambda_{\ell}.
\]
For the $\J$-component we use that $\J\alpha=\overline{\alpha}\,\J$ for $\alpha\in\CC_{\I}$ and the QDFT flip
\val{$F_{\I}^{(\bm n)}(\J I_{N_{\bn}})(F_{\I}^{(\bm n)})^{*}
=A_{\bm n}(\J I_{N_{\bn}})$} (Lemma~\ref{lem:flip-detailed}). As a consequence
\[
\begin{aligned}
&\ \val{U_{L}\Bigl(\sum_{\brho}P_{\bm n}^{(\brho)}\otimes p_{\mathrm R}(\brho)^{(2)}\Bigr)
(\J I_{N_{\bn}t})\,U_{R}^{*}} \\
&=\bigl[U_{L}\Bigl(\sum_{\brho}P_{\bm n}^{(\brho)}\otimes p_{\mathrm R}(\brho)^{(2)}\Bigr)U_{R}^{*}\bigr]\cdot
\val{\bigl(U_{R}(\J I_{N_{\bn}t})U_{R}^{*}\bigr)} \\
&=\val{\Lambda_{2}\cdot\bigl((F_{\I}^{(\bm n)}\otimes I_{t})
((\J I_{N_{\bn}})\otimes I_t)(F_{\I}^{(\bm n)}\otimes I_{t})^{*}\bigr)} \\
&=\val{\Lambda_{2}\cdot\bigl(
(F_{\I}^{(\bm n)}(\J I_{N_{\bn}})(F_{\I}^{(\bm n)})^{*})\otimes I_{t}\bigr)
=\Lambda_{2}\,(A_{\bm n}\otimes I_{t})(\J I_{N_{\bn}t}).}
\end{aligned}
\]
The case $\ell=3$ is identical, and this gives us \eqref{eq:canonical-form}.
Consider now the permutations $P$ of Definition~\ref{def:pair} and set
$\Pi_{L}:=P\otimes I_{s}$ and $\Pi_{R}:=P\otimes I_{t}$. Define
\[
\widetilde\Lambda_{\ell}:=\Pi_{L}\Lambda_{\ell}\Pi_{R}^{*}\qquad(\ell=0,1,2,3).
\]
\val{Applying $\Pi_L$ to the block rows and $\Pi_R^*$ to the block columns in
\eqref{eq:canonical-form}, we obtain}
\[
\Pi_{L}U_{L}\,C_{\bm n}(p_{\mathrm R})\,U_{R}^{*}\Pi_{R}^{*}
=\widetilde\Lambda_{0}+\widetilde\Lambda_{1}\,\I
+\Pi_{L}\Lambda_{2}(A_{\bm n}\otimes I_{t})\Pi_{R}^{*}\,\J
+\Pi_{L}\Lambda_{3}(A_{\bm n}\otimes I_{t})\Pi_{R}^{*}\,(\I\J).
\]
Now, insert the identity $\Pi_{R}^{*}\Pi_{R}=I$ between $\Lambda_{2}$ and $(A_{\bm n}\otimes I_{t})$ so that
\[
\Pi_{L}\Lambda_{2}(A_{\bm n}\otimes I_{t})\Pi_{R}^{*}
=(\Pi_{L}\Lambda_{2}\Pi_{R}^{*})\cdot\bigl(\Pi_{R}(A_{\bm n}\otimes I_{t})\Pi_{R}^{*}\bigr),
\]
and in a similar way, we do the same for $\Lambda_{3}$. By using the identity in block form of Lemma~\ref{lem:PAP}, we have
\[
Z:=\Pi_{R}(A_{\bm n}\otimes I_{t})\Pi_{R}^{*}
=(P\otimes I_{t})(A_{\bm n}\otimes I_{t})(P\otimes I_{t})^{*}
=\val{I_{\#\mathrm{Fix}}}\otimes I_{t}\ \oplus\ \bigoplus_{\bk\in\mathcal K}\begin{bmatrix}0&I_{t}\\[2pt]I_{t}&0\end{bmatrix}.
\]
Therefore, we infer
\begin{equation}\label{eq:diag}
\Pi_{L}U_{L}\,C_{\bm n}(p_{\mathrm R})\,U_{R}^{*}\Pi_{R}^{*}
=\widetilde\Lambda_{0}+\widetilde\Lambda_{1}\,\I
+\bigl(\widetilde\Lambda_{2}Z\bigr)\,\J
+\bigl(\widetilde\Lambda_{3}Z\bigr)\,(\I\J).
\end{equation}
The relation obtained in \eqref{eq:diag} reads as follows.
\begin{itemize}
\item If $\bk\in\mathrm{Fix}$, we have $Z=I_{t}$, and the block diagonals we obtain are equal to
\[
D_{1}(\bk){+ D_{2}(\bk)}:=\widehat S_{0}(\bk)+\widehat S_{1}(\bk)\,\I + {\bigl(\widehat S_{2}(\bk)+\widehat S_{3}(\bk)\,\I\bigr)\,\J}.
\]
\item \val{For each $\bk\in\mathcal K$}, the factor $Z$ is the $t$-block $2\times 2$ exchange matrix, and the corresponding blocks we obtain are
\[
\begin{bmatrix}
D_{1}(\bk) & D_{2}(\bk)\\[2pt]
D_{2}(-\bk) & D_{1}(-\bk)
\end{bmatrix},
\qquad
D_{2}(\bk):=\bigl(\widehat S_{2}(\bk)+\widehat S_{3}(\bk)\,\I\bigr)\,\J.
\]
\end{itemize}
Collecting all the sums in the block-diagonal order given by $P$ gives exactly our claim.
\end{proof}
\end{MovedProofContext}

\phantomsection\label{proof:p19}
\begingroup\hypersetup{linkcolor=red}\noindent\val{\textbf{Proof of Proposition~\ref{prop:qdftr-fibers-poly}}}\par\smallskip\endgroup
\begin{MovedProofContext}{p19}
\begin{proof}
First, we recall the notation of the canonical form in \eqref{eq:X-fiber}. Indeed, by Proposition~\ref{prop:canon-X}, we deduce the canonical QDFT form
\[
\Pi_{L}\,U_{L}\,C_{\bm n}\bigl(p_{\mathrm{R}}\bigr)\,U_{R}^{*}\,\Pi_{R}^{*}
=\bigoplus_{\bk\in\mathrm{Fix}}\!\Bigl(\widehat S_{0}(\bk)+\widehat S_{1}(\bk)\I+\bigl(\widehat S_{2}(\bk)+\widehat S_{3}(\bk)\I\bigr)\J\Bigr)
\]
\[
\oplus\
\val{\bigoplus_{\bk\in\mathcal K}}\!
\begin{bmatrix}
\widehat S_{0}(\bk)+\widehat S_{1}(\bk)\I & \bigl(\widehat S_{2}(\bk)+\widehat S_{3}(\bk)\I\bigr)\J\\[2pt]
\bigl(\widehat S_{2}(-\bk)+\widehat S_{3}(-\bk)\I\bigr)\J & \widehat S_{0}(-\bk)+\widehat S_{1}(-\bk)\I
\end{bmatrix}.
\]
Now, by the definitions of $Z$ and $W$ we have
\[
Z(\btheta_{\bk})=\widehat S_{0}(\bk)+\widehat S_{1}(\bk)\,\I,\qquad
W(-\btheta_{\bk})=\widehat S_{2}(\bk)+\widehat S_{3}(\bk)\,\I.
\]
Thus the fixed blocks are indeed $Z(\btheta_{\bk})+W(-\btheta_{\bk})\J$ and the paired blocks are
\[
\begin{bmatrix}
Z(\btheta_{\bk}) & W(-\btheta_{\bk})\J\\[2pt]
W(-\btheta_{-\bk})\J & Z(\btheta_{-\bk})
\end{bmatrix},
\]
which confirms \eqref{eq:X-fiber}.

For the complex embedding claim, we have to recall the basic identities for every $X\in\CC_{\I}^{s\times t}$
\[
\PhiSymp(X)=\begin{bmatrix}X&0\\[2pt]0&\overline{X}\end{bmatrix},
\qquad
\PhiSymp(X\J)=\begin{bmatrix}0&X\\[2pt]-\overline{X}&0\end{bmatrix},
\]
together with the $\RR$-linearity of $\PhiSymp$ and its blockwise action on real $2\times 2$ partitions.
For a fixed index $\bk\in\mathrm{Fix}$ we obtain
\[
\PhiSymp\bigl(Z(\btheta_{\bk})+\val{W(-\btheta_{\bk})}\J\bigr)
=\PhiSymp\bigl(Z(\btheta_{\bk})\bigr)+\PhiSymp\bigl(W(-\btheta_{\bk})\J\bigr)
=\begin{bmatrix}
Z(\btheta_{\bk}) & W(-\btheta_{\bk})\\[2pt]
-\overline{W(-\btheta_{\bk})} & \overline{Z(\btheta_{\bk})}
\end{bmatrix},
\]
which is \eqref{eq:Phi-fixed}. For the paired blocks, split them as sums of their diagonal and off-diagonal parts,
\[
\begin{bmatrix}
Z(\btheta_{\bk}) & W(-\btheta_{\bk})\J\\[2pt]
W(-\btheta_{-\bk})\J & Z(\btheta_{-\bk})
\end{bmatrix}
=
\begin{bmatrix}
Z(\btheta_{\bk}) & 0\\[2pt] 0 & Z(\btheta_{-\bk})
\end{bmatrix}
\;+\;
\begin{bmatrix}
0 & W(-\btheta_{\bk})\J\\[2pt] W(-\btheta_{-\bk})\J & 0
\end{bmatrix}.
\]
Applying $\PhiSymp$ to each part and summing, using the two identities above, yields exactly the $4\times 4$ complex block in \eqref{eq:Phi-paired}. This completes the proof.
\end{proof}
\end{MovedProofContext}

\phantomsection\label{proof:p20}
\begingroup\hypersetup{linkcolor=red}\noindent\val{\textbf{Proof of Corollary~\ref{cor:asymp-circ}}}\par\smallskip\endgroup
\begin{MovedProofContext}{p20}
\begin{proof}
Let $\btheta_{\bk}=2\pi(\bk/\bn)$ for $\bk\in\Lambda_{\bm n}$ and $\mathrm{Fix}=\{\bk:2\bk\equiv0\pmod{\bn}\}$ the usual fixed point set.
By Proposition~\ref{prop:qdftr-fibers-poly} (see \eqref{eq:X-fiber}), after the application of the QDFT and the permutations, we block-diagonalize $C_{\bm n}(p_{\mathrm R})$ as
\[
\bigl(Z(\btheta_{\bk})+W(-\btheta_{\bk})\J\bigr)\quad(\bk\in\mathrm{Fix}),\qquad
\begin{bmatrix}
Z(\btheta_{\bk}) & W(-\btheta_{\bk})\J\\[2pt]
W(-\btheta_{-\bk})\J & Z(\btheta_{-\bk})
\end{bmatrix}\quad(\{\bk,-\bk\}),
\]
and after the symplectic embedding  we have the $4\times4$ complex block in \eqref{eq:Phi-paired} for pairs and the $2\times2$ block in \eqref{eq:Phi-fixed} for fixed indices.
Now, if we apply for both rows and columns an $s \times t$ block permutation with the law $(1,4,3,2)$ to the $4\times4$ blocks \eqref{eq:Phi-paired}, we transform them into
\[
\diag\!\Bigl(\val{G_{\mathrm R}[p_{\mathrm R}](\btheta_{\bk})},\ \val{G_{\mathrm R}[p_{\mathrm R}](\btheta_{-\bk})}\Bigr),
\]
while for $\bk\in\mathrm{Fix}$ we have the periodic congruence $-\btheta_{\bk}\equiv\btheta_{\bk}\ (\mathrm{mod}\ 2\pi)$, so
\[
\PhiSymp\bigl(Z(\btheta_{\bk})+W(-\btheta_{\bk})\J\bigr)
=\begin{bmatrix}Z(\btheta_{\bk})&W(-\btheta_{\bk})\\[2pt]-\overline{W(-\btheta_{\bk})}&\overline{Z(\btheta_{\bk})}\end{bmatrix}
=\val{G_{\mathrm R}[p_{\mathrm R}](\btheta_{\bk})}.
\]
Hence, for every $\bn$, we deduce
\[
\sigma\!\bigl(\PhiSymp(C_{\bm n}(p_{\mathrm R}))\bigr)
=\bigsqcup_{\bk\in\Lambda_{\bm n}}\sigma\!\bigl(\val{G_{\mathrm R}[p_{\mathrm R}](\btheta_{\bk})}\bigr).
\]
If $s=t$, the left/right QDFT and the permutation used above coincide so the singular value result is obtained by unitary similarity, and thus we can give the same conclusion for eigenvalues:
\[
\lambda\!\bigl(\PhiSymp(C_{\bm n}(p_{\mathrm R}))\bigr)
=\bigsqcup_{\bk\in\Lambda_{\bm n}}\lambda\!\bigl(\val{G_{\mathrm R}[p_{\mathrm R}](\btheta_{\bk})}\bigr).
\]
As a consequence, note that for any continuous compactly supported test function $\psi$, the Riemann sums on $\{\btheta_{\bk}\}_{\bk\in\Lambda_{\bm n}}$ of the left side of \eqref{eq:def-ev-emb} and \eqref{eq:def-sv-emb} applied to $\{C_{\bm n}(p_{\mathrm R})\}_{\bm n}$ are a converging trapezoidal rule that yields
$\{C_{\bm n}(p_{\mathrm R})\}_{\bm n}\sim_{\sigma}\val{G_{\mathrm R}[p_{\mathrm R}]}$ and, when $s=t$, also $\{C_{\bm n}(p_{\mathrm R})\}_{\bm n}\sim_{\lambda}\val{G_{\mathrm R}[p_{\mathrm R}]}$.
\end{proof}
\end{MovedProofContext}

\phantomsection\label{proof:p21}
\begingroup\hypersetup{linkcolor=red}\noindent\val{\textbf{Proof of Theorem~\ref{thm:right-acs-circ}}}\par\smallskip\endgroup
\begin{MovedProofContext}{p21}
\begin{proof}
\val{By Proposition~\ref{prop:density-sandwich-functional}, right
trigonometric polynomials are dense in
$L^{1}(\TT^{d};\HH^{s\times t})$. Hence, there exist finite sets
$\mathcal S_m\subset\ZZ^{d}$ and coefficients
$A_{m,\bk}\in\HH^{s\times t}$ such that
\[
F_m(\btheta)\ =\ \sum_{\bk\in \mathcal S_m}
A_{m,\bk}\,e^{-\I\langle \bk,\btheta\rangle},
\qquad
\|F-F_m\|_{L^{1}(\TT^{d};\HH^{s\times t})}
\xrightarrow[m\to\infty]{}0.
\]}
\val{Fix $m$ and write
$\mathcal S_m\subset\prod_{\ell=1}^{d}
\{-r_{\ell}(m),\dots,r_{\ell}(m)\}$.
For all $\bm n$ with $n_{\ell}\ge r_{\ell}(m)+1$,
\[
T_{\bm n}^{(R)}(F_m)-B_{\bm n,m}
=
\sum_{\bk\in \mathcal S_m}
\bigl(J_{\bm n}^{(-\bk)}-P_{\bm n}^{(-\bk)}\bigr)
\otimes A_{m,\bk}.
\]
For each $\bk=(k_1,\dots,k_d)$, Lemma~\ref{lem:complex-shift-rank} gives
\[
\operatorname{rank}\!\bigl(
J_{\bm n}^{(-\bk)}-P_{\bm n}^{(-\bk)}
\bigr)
\le
\sum_{\ell=1}^{d}
2|k_{\ell}|\,\frac{N_{\bm n}}{n_{\ell}}.
\]
Using
$\operatorname{rank}(X\otimes Y)
\val{=}
\operatorname{rank}(X)\operatorname{rank}(Y)$
and
$\operatorname{rank}(A_{m,\bk})\le\min\{s,t\}$,
with
$r_{\bm n}:=N_{\bm n}\min\{s,t\}$, we obtain
\[
\frac{
\operatorname{rank}_{\HH}\!\Bigl(
T_{\bm n}^{(R)}(F_m)-B_{\bm n,m}
\Bigr)
}{
r_{\bm n}
}
\le
2\sum_{\ell=1}^{d}
\frac{1}{n_{\ell}}
\sum_{\bk\in\mathcal S_m}|k_{\ell}|
\xrightarrow[\bm n\to\infty]{}
0
\qquad
(\text{for each fixed }m).
\]}
\val{For every scalar $m\in\NN$, choose $\bn_m\in\NN^d$ componentwise large
enough that $n_\ell\ge r_\ell(m)+1$ for every $\ell$ and}
\[
\val{\frac{
\operatorname{rank}_{\HH}\!\bigl(
T_{\bn}^{(R)}(F_m)-B_{\bn,m}
\bigr)}{r_{\bn}}
\le \frac{1}{m}
\qquad\text{for every }\bn\ge\bn_m.}
\]
\val{Thus the circulant-versus-polynomial-Toeplitz difference satisfies the
rank bound required for a.c.s.\ convergence, with $c(m)=1/m\to0$.}
By Theorem~\ref{thm:q-schatten-coarse-mv} with $p=1$,
\[
\|T_{\bm n}^{(R)}(F)-T_{\bm n}^{(R)}(F_m)\|_{1}
\ \le\ {4}\,\frac{N_{\bm n}}{(2\pi)^{d}}\ \|F-F_m\|_{L^{1}(\TT^{d})}.
\]
Dividing by $r_{\bm n}=N_{\bm n}\min\{s,t\}$ gives
\[
\frac{\|T_{\bm n}^{(R)}(F)-T_{\bm n}^{(R)}(F_m)\|_{1}}{r_{\bm n}}
\ \le\ \frac{\val{4}}{(2\pi)^{d}\min\{s,t\}}\ \|F-F_m\|_{L^{1}(\TT^{d})}
\ =:\ \omega_{1}(m)\xrightarrow[m\to\infty]{}0.
\]
\val{Finally,
\[
T_{\bn}^{(R)}(F)-B_{\bn,m}
=
\bigl[
T_{\bn}^{(R)}(F)-T_{\bn}^{(R)}(F_m)
\bigr]
+
\bigl[
T_{\bn}^{(R)}(F_m)-B_{\bn,m}
\bigr].
\]
The first bracket satisfies the norm criterion
\(\mathrm{ACS}_{\HH}5\), while the second has normalized rank at most
\(1/m\). Hence
\[
\bigl\{\{B_{\bn,m}\}_{\bn}\bigr\}_{m\in\NN}
\]
is an a.c.s.\ for
\(\{T_{\bn}^{(R)}(F)\}_{\bn}\).}
\end{proof}
\end{MovedProofContext}

\phantomsection\label{proof:p22}
\begingroup\hypersetup{linkcolor=red}\noindent\val{\textbf{Alternative a.c.s.-based proof of the singular-value distribution in Theorem~\ref{thm:SV-qToeplitz-1axis-embed-mv}}}\par\smallskip\endgroup
\begin{MovedProofContext}{p22}
\begin{proof}[\val{a.c.s.-based proof of the singular-value distribution}]
\val{Set}
\[
\val{H(\btheta):=Z(\btheta)+\bigl(W\circ r_{S_L}\bigr)(\btheta)\J.}
\]
\val{By Proposition~\ref{prop:Toeplitz-right-reduction}, $H\in L^1$ and}
\[
\val{
T_{\bn}^{(S_L,S_R)}(F)=T_{\bn}^{(R)}(H)
\qquad\text{for every }\bn.}
\]
\val{Theorem~\ref{thm:right-acs-circ} provides an a.c.s.\ of circulants
from right trigonometric polynomials $H_m$ satisfying
\[
H_m\longrightarrow H
\qquad\text{in }L^1(\TT^d;\HH^{s\times t}).
\]
Set
\[
\widetilde H_m(\btheta):=H_m(-\btheta),
\qquad
\widetilde H(\btheta):=H(-\btheta).
\]
By construction,
\(
B_{\bn,m}=C_{\bn}(\widetilde H_m).
\)
Hence Corollary~\ref{cor:asymp-circ} gives
\[
\{B_{\bn,m}\}_{\bn}\sim_\sigma G_{\mathrm R}[\widetilde H_m].
\]
Reflection preserves the $L^1$ norm, and \eqref{eq:GR-bracket} yields
\[
G_{\mathrm R}[\widetilde H_m]
\longrightarrow
G_{\mathrm R}[\widetilde H]
\qquad\text{in }L^1.
\]
Therefore by $\mathrm{ACS}_{\HH}1$ we find
\[
\{T_{\bn}^{(R)}(H)\}_{\bn}
\sim_\sigma
G_{\mathrm R}[\widetilde H].
\]
Moreover,
\[
G_{\mathrm R}[\widetilde H](\btheta)
=
G_{\mathrm R}[H](-\btheta).
\]
Since reflection preserves Haar measure, we have
\[
\{T_{\bn}^{(R)}(H)\}_{\bn}
\sim_\sigma
G_{\mathrm R}[H].
\]}
\val{Moreover, for a.e.\ $\btheta\in\TT^d$, \eqref{eq:GR-bracket} and the definition of
$G_{S_L,S_R}$ in Theorem~\ref{thm:sandwich+perm-rect-r} lead to }
\[
\val{
G_{\mathrm R}[H](\btheta)
=
\begin{bmatrix}
Z(\btheta)
&
W\!\bigl(r_{S_L}(-\btheta)\bigr)
\\[2pt]
-\overline{W\!\bigl(r_{S_L}(\btheta)\bigr)}
&
\overline{Z(-\btheta)}
\end{bmatrix}
=
G_{S_L,S_R}(\btheta).}
\]
\val{Therefore}
\[
\val{
\{T_{\bn}^{(S_L,S_R)}(F)\}_{\bn}
\sim_\sigma
G_{S_L,S_R}.}
\]
\val{The eigenvalue distribution claims follow by the same argument.}
\end{proof}
\end{MovedProofContext}

\WorkedAppendix

\clearpage
\RevisionBibliography

\end{document}